\DeclareMathAlphabet\gothic{U}{euf}{m}{n}
\def\eqnarray{\stepcounter{equation}\let\@currentlabel=\theequation
\global\@eqnswtrue
\tabskip\@centering\let\\=\@eqncr
$$\halign to \displaywidth\bgroup\hfil\global\@eqcnt\z@
  $\displaystyle\tabskip\z@{##}$&\global\@eqcnt\@ne
  \hfil$\displaystyle{{}##{}}$\hfil
  &\global\@eqcnt\tw@ $\displaystyle{##}$\hfil
  \tabskip\@centering&\llap{##}\tabskip\z@\cr}
\def\endeqnarray{\@@eqncr\egroup
      \global\advance\c@equation\m@ne$$\global\@ignoretrue}
\def\@yeqncr{\@ifnextchar [{\@xeqncr}{\@xeqncr[5pt]}}
\begin{document}
\bibliographystyle{tom}

\newtheorem{lemma}{Lemma}[section]
\newtheorem{thm}[lemma]{Theorem}
\newtheorem{cor}[lemma]{Corollary}
\newtheorem{prop}[lemma]{Proposition}
\newtheorem{ddefinition}[lemma]{Definition}
\newtheorem{stat}[lemma]{{\hspace{-5pt}}}

\theoremstyle{definition}

\newtheorem{remark}[lemma]{Remark}
\newtheorem{exam}[lemma]{Example}

\newcommand{\gota}{\gothic{a}}
\newcommand{\gotb}{\gothic{b}}
\newcommand{\gotc}{\gothic{c}}
\newcommand{\gote}{\gothic{e}}
\newcommand{\gotf}{\gothic{f}}
\newcommand{\gotg}{\gothic{g}}
\newcommand{\gothh}{\gothic{h}}
\newcommand{\gotk}{\gothic{k}}
\newcommand{\gotm}{\gothic{m}}
\newcommand{\gotn}{\gothic{n}}
\newcommand{\gotp}{\gothic{p}}
\newcommand{\gotq}{\gothic{q}}
\newcommand{\gotr}{\gothic{r}}
\newcommand{\gots}{\gothic{s}}
\newcommand{\gott}{\gothic{t}}
\newcommand{\gotu}{\gothic{u}}
\newcommand{\gotv}{\gothic{v}}
\newcommand{\gotw}{\gothic{w}}
\newcommand{\gotz}{\gothic{z}}
\newcommand{\gotA}{\gothic{A}}
\newcommand{\gotB}{\gothic{B}}
\newcommand{\gotG}{\gothic{G}}
\newcommand{\gotL}{\gothic{L}}
\newcommand{\gotS}{\gothic{S}}
\newcommand{\gotT}{\gothic{T}}

\newcounter{teller}
\renewcommand{\theteller}{(\alph{teller})}
\newenvironment{tabel}{\begin{list}%
{\rm  (\alph{teller})\hfill}{\usecounter{teller} \leftmargin=1.1cm
\labelwidth=1.1cm \labelsep=0cm \parsep=0cm}
                      }{\end{list}}

\newcounter{tellerr}
\renewcommand{\thetellerr}{(\roman{tellerr})}
\newenvironment{tabeleq}{\begin{list}%
{\rm  (\roman{tellerr})\hfill}{\usecounter{tellerr} \leftmargin=1.1cm
\labelwidth=1.1cm \labelsep=0cm \parsep=0cm}
                         }{\end{list}}

\newcounter{tellerrr}
\renewcommand{\thetellerrr}{(\Roman{tellerrr})}
\newenvironment{tabelR}{\begin{list}%
{\rm  (\Roman{tellerrr})\hfill}{\usecounter{tellerrr} \leftmargin=1.1cm
\labelwidth=1.1cm \labelsep=0cm \parsep=0cm}
                         }{\end{list}}

\newcounter{proofstep}
\newcommand{\nextstep}{\refstepcounter{proofstep}\vertspace \par 
          \noindent{\bf Step \theproofstep} \hspace{5pt}}
\newcommand{\firststep}{\setcounter{proofstep}{0}\nextstep}

\newcommand{\Ni}{\mathds{N}}
\newcommand{\Qi}{\mathds{Q}}
\newcommand{\Ri}{\mathds{R}}
\newcommand{\Ci}{\mathds{C}}
\newcommand{\Ti}{\mathds{T}}
\newcommand{\Zi}{\mathds{Z}}
\newcommand{\Fi}{\mathds{F}}

\renewcommand{\proofname}{{\bf Proof}}

\makeatletter
\renewenvironment{proof}[1][\proofname]{\par
  \pushQED{\qed}%
  \normalfont \topsep6\p@\@plus6\p@\relax
  \trivlist
  \item[\hskip\labelsep
        \itshape
    #1\@addpunct{{\bf.}}]\ignorespaces
}{%
  \popQED\endtrivlist\@endpefalse
}
\makeatother

\newcommand{\vertspace}{\vskip10.0pt plus 4.0pt minus 6.0pt}

\newcommand{\simh}{{\stackrel{{\rm cap}}{\sim}}}
\newcommand{\ad}{{\mathop{\rm ad}}}
\newcommand{\Ad}{{\mathop{\rm Ad}}}
\newcommand{\alg}{{\mathop{\rm alg}}}
\newcommand{\clalg}{{\mathop{\overline{\rm alg}}}}
\newcommand{\Aut}{\mathop{\rm Aut}}
\newcommand{\arccot}{\mathop{\rm arccot}}
\newcommand{\capp}{{\mathop{\rm cap}}}
\newcommand{\rcapp}{{\mathop{\rm rcap}}}
\newcommand{\diam}{\mathop{\rm diam}}
\newcommand{\divv}{\mathop{\rm div}}
\newcommand{\codim}{\mathop{\rm codim}}
\newcommand{\RRe}{\mathop{\rm Re}}
\newcommand{\IIm}{\mathop{\rm Im}}
\newcommand{\Tr}{{\mathop{\rm Tr}}}
\newcommand{\Vol}{{\mathop{\rm Vol}}}
\newcommand{\Leb}{{\mathop{\rm Leb}}}
\newcommand{\card}{{\mathop{\rm card}}}
\newcommand{\rank}{\mathop{\rm rank}}
\newcommand{\supp}{\mathop{\rm supp}}
\newcommand{\sgn}{\mathop{\rm sgn}}
\newcommand{\essinf}{\mathop{\rm ess\,inf}}
\newcommand{\esssup}{\mathop{\rm ess\,sup}}
\newcommand{\Int}{\mathop{\rm Int}}
\newcommand{\lcm}{\mathop{\rm lcm}}
\newcommand{\loc}{{\rm loc}}
\newcommand{\HS}{{\rm HS}}
\newcommand{\n}{{\rm N}}
\newcommand{\WOT}{{\rm WOT}}

\newcommand{\at}{@}

\newcommand{\mod}{\mathop{\rm mod}}
\newcommand{\spann}{\mathop{\rm span}}
\newcommand{\one}{\mathds{1}}

\hyphenation{groups}
\hyphenation{unitary}

\newcommand{\tfrac}[2]{{\textstyle \frac{#1}{#2}}}

\newcommand{\ca}{{\cal A}}
\newcommand{\cb}{{\cal B}}
\newcommand{\cc}{{\cal C}}
\newcommand{\cd}{{\cal D}}
\newcommand{\ce}{{\cal E}}
\newcommand{\cf}{{\cal F}}
\newcommand{\ch}{{\cal H}}
\newcommand{\chs}{{\cal HS}}
\newcommand{\ci}{{\cal I}}
\newcommand{\ck}{{\cal K}}
\newcommand{\cl}{{\cal L}}
\newcommand{\cm}{{\cal M}}
\newcommand{\cn}{{\cal N}}
\newcommand{\co}{{\cal O}}
\newcommand{\cp}{{\cal P}}
\newcommand{\cs}{{\cal S}}
\newcommand{\ct}{{\cal T}}
\newcommand{\cx}{{\cal X}}
\newcommand{\cy}{{\cal Y}}
\newcommand{\cz}{{\cal Z}}

\newlength{\hightcharacter}
\newlength{\widthcharacter}
\newcommand{\covsup}[1]{\settowidth{\widthcharacter}{$#1$}\addtolength{\widthcharacter}{-0.15em}\settoheight{\hightcharacter}{$#1$}\addtolength{\hightcharacter}{0.1ex}#1\raisebox{\hightcharacter}[0pt][0pt]{\makebox[0pt]{\hspace{-\widthcharacter}$\scriptstyle\circ$}}}
\newcommand{\cov}[1]{\settowidth{\widthcharacter}{$#1$}\addtolength{\widthcharacter}{-0.15em}\settoheight{\hightcharacter}{$#1$}\addtolength{\hightcharacter}{0.1ex}#1\raisebox{\hightcharacter}{\makebox[0pt]{\hspace{-\widthcharacter}$\scriptstyle\circ$}}}
\newcommand{\scov}[1]{\settowidth{\widthcharacter}{$#1$}\addtolength{\widthcharacter}{-0.15em}\settoheight{\hightcharacter}{$#1$}\addtolength{\hightcharacter}{0.1ex}#1\raisebox{0.7\hightcharacter}{\makebox[0pt]{\hspace{-\widthcharacter}$\scriptstyle\circ$}}}

\thispagestyle{empty}

\vspace*{1cm}
\begin{center}
{\Large\bf On one-parameter Koopman groups} \\[5mm]
\large A.F.M. ter Elst$^1$ and M. Lema\'nczyk$^2$

\end{center}

\vspace{5mm}

\begin{center}
{\bf Abstract}
\end{center}

\begin{list}{}{\leftmargin=1.8cm \rightmargin=1.8cm \listparindent=10mm 
   \parsep=0pt}
\item
We characterize Koopman one-parameter $C_0$-groups in the 
class of all unitary one-parameter $C_0$-groups on $L_2(X)$
as those that preserve $L_\infty(X)$ and for which 
the infinitesimal generator is a derivation on the bounded functions in its
domain.
\end{list}

\vspace{4cm}
\noindent
April 2015

\vspace{5mm}
\noindent
2010 AMS Subject Classification: 37A10, 47D03.

\vspace{5mm}
\noindent
Keywords: Koopman operator, 
one-parameter continuous family of measure-preserving transformations,
cocycle, non-singular transformation, consistent one-parameter groups.

\vspace{15mm}

\noindent
{\bf Home institutions:}    \\[3mm]
\begin{tabular}{@{}cl@{\hspace{10mm}}cl}
1. & Department of Mathematics  & 
  2. & Faculty of Mathematics and Computer Science \\
& University of Auckland   & 
  & Nicolaus Copernicus University  \\
& Private bag 92019 & 
  & 12/18 Chopin street \\
& Auckland 1142 & 
  &  87-100 Toru\'n \\
& New Zealand  & 
  & Poland  \\[10pt]
& terelst{\at}math.auckland.ac.nz
  & & mlem{\at}mat.umk.pl  
\end{tabular}

\newpage
\setcounter{page}{1}

\section{Introduction} \label{Skoop1}

Let $(X,\cb,\mu)$ be a standard Borel probability space.
Moreover, let $T \colon X\to X$ be an (a.e.) invertible, measurable
and measure-preserving map, i.e.\ $\mu(A)=\mu(T^{-1}A)$ for each $A\in\cb$.
Then $T$ induces
on $L_2(X)$ a unitary operator $U_T$, called a Koopman operator, defined by 
$U_Tf:=f\circ T$ for all $f\in L_2(X)$.
One can ask for the converse: given a unitary operator $U$ on $L_2(X)$, 
how to recognize that it is a Koopman operator.
The very classical answer says that if $U$  
preserves multiplication of bounded functions, i.e.\ if
\begin{equation}
U(f \, g) = U(f) \, U(g)
\label{eSkoop1;4}
\end{equation}
for all $f,g\in L_\infty(X)$, then $U$ is 
a Koopman operator by a combination of the multiplication theorem in \cite{Hal2} (page 45)
and \cite{Kec} Theorem~15.9.
Another type of questions one can ask for is, given a unitary operator 
$U$ on an abstract Hilbert space, how to recognize that it is unitarily equivalent 
to a Koopman operator, see for example \cite{CotlarRicabarra}, \cite{Choksi}, \cite{Rid}
and \cite{Denker}.

The problem which unitary operators can be realized as Koopman operator remains one of 
important and still unsolved problems in ergodic theory, see e.g.\ the 
discussion on this problem in \cite{KatokLeman}, \cite{KatokThouvenot} and also 
the survey article \cite{Leman1}.
Up to unitary equivalence each unitary operator $U$ is determined by the two invariants: 
the equivalence class $[\sigma]$ of a finite positive Borel measure $\sigma$ on the circle, 
called the maximal spectral type $\sigma_U$ of $U$, together with the
 (Borel) multiplicity functionÊ $M=M_U \colon \Ti \to \{1,2,\ldots\}\cup\{\infty\}$ 
which is defined $\sigma$-a.e.
Once a pair $([\sigma],M)$ is given, it is easy to construct on the abstract level 
a unitary operator $U$ for which $(\sigma_U,M_U)=([\sigma],M)$.
Nevertheless, it is an open problem whether there exists a (unitary) Koopman operator $U$
such that $(\sigma_U,M_U)=([\sigma],M)$.
(Some restrictions must be imposed on 
    $\sigma$, for example $\sigma$ must be of symmetric type and its topological 
    support must be full if the construction is sought in the class of 
    $U_T$ with $T$ ergodic.)
While some progress has been made recently in the spectral theory of 
single transformation, cf.\ \cite{Leman1}, for unitary one-parameter 
groups still little is known.

A unitary one-parameter $C_0$-group $(U_t)_{t\in\Ri}$ is called a Koopman group
if for all $t\in\Ri$ there exists a measurable 
$T_t \colon X\to X$ such that $U_tf=f\circ T_t$ for all $f\in L_2(X)$.
It is clear that a Koopman group must preserve $L_\infty(X)$, but this 
latter condition is satisfied also for many unitary one-parameter 
$C_0$-groups which are not Koopman groups.
By the Stone theorem \cite{Sto3}, the generator 
$A$ of a unitary one-parameter $C_0$-group is skew-adjoint.
Therefore each unitary one-parameter $C_0$-groupÊ is determined up to unitary equivalence
by $(\sigma_U,M_U)$, where $\sigma_U=[\sigma]$ for some finite positive Borel measure 
on $\Ri$.
In order to characterize those pairs $([\sigma],M)$ which can 
be realized by Koopman groups, it seems to natural to 
characterize first those generators $A$ for which 
$(e^{tA})_{t\in\Ri}$ is equivalent to a Koopman group.
Even the problem to characterize in terms of their generator
which unitary one-parameter $C_0$-groups 
are Koopman groups  seems to be, however, far from obvious.
Moreover, once such a characterization is done, one can consider the 
problem whether a perturbation of a Koopman representation 
remains Koopman.
The latter is of independent interest.

In order to formulate the main results of the paper, first recall that
if $A$ is an operator in a function space $E$ and 
$\cd \subset D(A)$ is an algebra,
then we say that $A$ is a {\bf derivation on $\cd$} if
\[
A(f \, g) = (A f) \, g + f \, (Ag)
\]
for all $f,g \in \cd$.
The main result of the paper is the following.

\begin{thm} \label{tkoop101}
Let $(X,\cb,\mu)$ be a standard Borel probability space.
Let $U$ be a unitary one-parameter $C_0$-group on $L_2(X)$
with generator $A$.
Then the following are equivalent.
\begin{tabeleq}
\item \label{tkoop101-1}
For all $t \in \Ri$ there exists an a.e.\ invertible measurable and measure preserving
map $T_t \colon X \to X$ such that $U_t f = f \circ T_t$
for all $f \in L_2(X)$.
\item \label{tkoop101-2}
The space $L_\infty(X)$ is invariant under $U$.
Moreover, the space $D(A) \cap L_\infty(X)$ is an algebra
and $A$ is a derivation on $D(A) \cap L_\infty(X)$.
\end{tabeleq}
\end{thm}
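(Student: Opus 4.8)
The implication \ref{tkoop101-1}$\Rightarrow$\ref{tkoop101-2} is the easy direction. If each $U_t$ is the Koopman operator of a measure-preserving map $T_t$, then $U_t$ clearly maps $L_\infty(X)$ to itself (indeed isometrically, since $\|f\circ T_t\|_\infty=\|f\|_\infty$), so $L_\infty(X)$ is $U$-invariant. For the derivation property, take $f,g\in D(A)\cap L_\infty(X)$. Using that multiplication $L_\infty\times L_2\to L_2$ is jointly continuous on bounded sets, one differentiates $U_t(fg)=U_t(f)\,U_t(g)=(f\circ T_t)(g\circ T_t)$ at $t=0$: the product rule for the derivative of $t\mapsto U_t(f)\cdot U_t(g)$ in $L_2$ gives that $fg\in D(A)$ with $A(fg)=(Af)g+f(Ag)$, which also shows $D(A)\cap L_\infty(X)$ is an algebra. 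Here one uses the standard fact that if $t\mapsto \xi(t)$ is $L_2$-differentiable with bounded $L_\infty$-norm and $t\mapsto\eta(t)$ is $L_\infty$-differentiable, then $t\mapsto \xi(t)\eta(t)$ is $L_2$-differentiable with the Leibniz derivative; alternatively one commutes the limit using $U_t(fg)-fg = (U_tf - f)U_tg + f(U_tg-g)$ and divides by $t$.

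The substantial direction is \ref{tkoop101-2}$\Rightarrow$\ref{tkoop101-1}. The plan is to show that the derivation property at the generator level forces the multiplicativity \eqref{eSkoop1;4} of each $U_t$ on $L_\infty(X)$, and then invoke the classical characterization (Halmos' multiplication theorem together with \cite[Theorem~15.9]{Kec}) to conclude that each $U_t$ is a Koopman operator of an a.e.\ invertible measure-preserving $T_t$. To get multiplicativity, fix $f,g\in D(A)\cap L_\infty(X)$ and consider the $L_2$-valued function $\varphi(t):=U_t(fg)-U_t(f)\,U_t(g)$. Since $L_\infty(X)$ is $U$-invariant, $U_tf$ and $U_tg$ lie in $L_\infty(X)$; the invariance together with the closed graph theorem gives a bound $\|U_tf\|_\infty\le C_t\|f\|_\infty$ locally uniformly in $t$ (the maps $U_t|_{L_\infty}$ are bounded and strongly measurable, hence locally bounded by uniform boundedness). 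One then checks that $\varphi$ is differentiable and, using that $A$ is a derivation on $D(A)\cap L_\infty(X)$ and that $U_t(D(A)\cap L_\infty)\subset D(A)\cap L_\infty$, computes $\varphi'(t)=A\varphi(t)$, i.e.\ $\varphi$ solves the abstract Cauchy problem for $A$ with $\varphi(0)=0$; by uniqueness $\varphi\equiv 0$. Finally one removes the regularity assumption on $f,g$: since $D(A)\cap L_\infty(X)$ is dense in $L_\infty(X)$ in a suitable topology (for instance it is $\|\cdot\|_2$-dense and uniformly bounded approximations can be arranged by truncation, using that $D(A)$ is a $\|\cdot\|_2$-dense subspace), a bounded-convergence argument extends $U_t(fg)=U_t(f)U_t(g)$ to all $f,g\in L_\infty(X)$.

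The main obstacle I expect is twofold. First, the step $\varphi'(t)=A\varphi(t)$ requires knowing that $U_t$ preserves $D(A)\cap L_\infty(X)$ and that the derivation identity propagates along the flow; this needs the invariance of $L_\infty(X)$ to be upgraded to invariance of $D(A)\cap L_\infty(X)$, which follows because $U_t$ commutes with $A$ (hence preserves $D(A)$) and preserves $L_\infty$. Second, and more delicate, is the density/approximation step: one must approximate an arbitrary $f\in L_\infty(X)$ by elements of $D(A)\cap L_\infty(X)$ in a way that is simultaneously $\|\cdot\|_2$-convergent and $\|\cdot\|_\infty$-bounded, so that products converge in $L_2$ and the multiplicativity passes to the limit. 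A clean way is: given $f\in L_\infty$, the resolvent approximants $f_n:=n R(n,A)f=n\int_0^\infty e^{-nt}U_tf\,dt$ lie in $D(A)$, converge to $f$ in $L_2$, and satisfy $\|f_n\|_\infty\le (\sup_{0\le t\le 1}\|U_t|_{L_\infty}\|)\,\|f\|_\infty$ — wait, this needs the local bound on $\|U_t|_{L_\infty}\|$ and integrability; one shows the $L_\infty$-norms of $f_n$ stay bounded using the local boundedness of $t\mapsto\|U_t|_{L_\infty}\|$ on $[0,\infty)$ is not automatic, so instead one uses the average over a bounded interval, $f_n:=n\int_0^{1/n}U_tf\,dt$, which satisfies $f_n\in D(A)$ (as a Bochner mean of the orbit) wait, more precisely $f_n\in D(A)$ since $\int_0^s U_tf\,dt\in D(A)$ with $A\int_0^s U_tf\,dt=U_sf-f$, and $\|f_n\|_\infty\le\sup_{0\le t\le 1/n}\|U_tf\|_\infty$, which is finite by local boundedness near $0$; then $f_n\to f$ in $L_2$. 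With these $f_n$ in hand the limiting argument closes the proof. Once multiplicativity \eqref{eSkoop1;4} holds for every $t$, the cited classical result yields the measure-preserving maps $T_t$, and a.e.\ invertibility follows from invertibility of $U_t$ (with inverse $U_{-t}$, itself a Koopman operator, forcing $T_t$ to be a.e.\ bijective).
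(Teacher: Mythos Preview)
Your overall strategy is the same as the paper's: show that the derivation hypothesis on $A$ forces $U_t(fg)=(U_tf)(U_tg)$ via a Cauchy-problem uniqueness argument, then extend from $D(A)\cap L_\infty$ to all of $L_\infty$, and finally invoke Halmos/Kechris. The paper packages the Cauchy-problem step as Proposition~\ref{pkoop202} (working with weak derivatives rather than strong ones, since $AU_tf$ need not lie in $L_\infty$) and the Boolean-algebra conclusion as Theorem~\ref{tkoop207}; your density device $f_n=n\int_0^{1/n}U_tf\,dt$ is exactly Lemma~\ref{lkoop205}.

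There is, however, one genuine gap. You need $\|U_t|_{L_\infty}\|$ locally bounded in $t$: you invoke it to differentiate the product, and again to control $\|f_n\|_\infty$. Your justification --- ``the maps $U_t|_{L_\infty}$ are bounded and strongly measurable, hence locally bounded by uniform boundedness'' --- does not work. The restricted group on $L_\infty$ is typically only $w^*$-continuous, not strongly continuous or strongly measurable (and $L_\infty$ is nonseparable, so Pettis' theorem is unavailable); moreover, uniform boundedness would require you already to know that $\sup_{t\in[0,1]}\|U_tf\|_\infty<\infty$ for each $f$, which is precisely the issue. The closed graph theorem gives only a $t$-dependent constant $C_t$. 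Upgrading this to a locally uniform bound is the content of the paper's Theorem~\ref{tkoop206}, which is singled out as new: one shows that $t\mapsto\|U_tf\|_\infty=\sup\{|(U_tf,g)|:g\in L_2,\ \|g\|_1\le1\}$ is lower semicontinuous (hence measurable), and then runs a group-property/measure-theoretic argument (from \cite{ABHN}, Lemma~3.16.4) to rule out blow-up on a compact interval. Without this step your argument does not close, so you should either supply that proof or cite it.

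A secondary technical point: the strong $L_2$-differentiability of $t\mapsto (U_tf)(U_tg)$ is not immediate, because in the cross term $AU_tf\cdot(U_{t+h}g-U_tg)$ you have an $L_2$ factor times something converging to zero only in $L_2$, not in $L_\infty$. The paper handles this by pairing against $h\in L_2$ and using that $h\,\overline{AU_tf}\in L_1$ together with the $w^*$-continuity of $\widetilde U$; weak differentiability is enough for the uniqueness of the Cauchy problem. Your sketch should be read in that weak sense.
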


We are also able to prove in Corollary~\ref{ckoop207.5}
a generalisation of the above theorem where 
the group $U$ is a $C_0$-group which is not necessarily unitary
and we do not require measure preserving in Condition~\ref{tkoop101-1}.
Moreover we have a generalisation where 
the measure $\mu$ is merely $\sigma$-finite, see Theorem~\ref{tkoop207} below.

A theorem of the same nature as Theorem~\ref{tkoop101}
was given by Gallavotti and Pulvirenti, \cite{GalP} Theorem~4.

\begin{thm}[\cite{GalP}]
Let $(X,\cb,\mu)$ be a standard Borel probability space.
Let $A$ be a self-adjoint operator and let $\cd \subset D(A) \subset L_\infty(X)$.
Suppose that $\cd$ is a core for $A$, $\one \in \cd$, 
$\cd$ is an algebra, $\cd$ is self-adjoint 
(that is if $f \in \cd$ then $\overline f \in \cd$),
$A$ is a derivation on $\cd$ and $\overline{A f} = - A \overline f$
for all $f \in \cd$.
Then {\rm \ref{tkoop101-1}} in Theorem~{\rm \ref{tkoop101}} is valid.
\end{thm}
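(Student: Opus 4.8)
The plan is to verify condition~\ref{tkoop101-2} of Theorem~\ref{tkoop101} for the unitary one-parameter $C_0$-group $U_t := e^{itA}$, whose generator is the skew-adjoint operator $iA$, and then to quote Theorem~\ref{tkoop101}. Since $D(iA) = D(A) \subseteq L_\infty(X)$ by hypothesis, one has $D(iA) \cap L_\infty(X) = D(A)$, so condition~\ref{tkoop101-2} reduces to two assertions: that $D(A)$ is an algebra on which $A$ (equivalently $iA$) is a derivation, and that $U_t L_\infty(X) \subseteq L_\infty(X)$ for every $t$.

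For the first assertion, the inclusion of $D(A)$, with the graph norm, into $L_\infty(X)$ has closed graph: if $f_n \to f$ in the graph norm and $f_n \to h$ in $L_\infty(X)$, both convergences take place in $L_2(X)$, so $f = h$. Hence there is a constant $c$ with $\|f\|_\infty \le c\,(\|f\|_2 + \|Af\|_2)$ for all $f \in D(A)$. Given $f,g \in D(A)$, choose $f_n,g_n \in \cd$ converging to $f,g$ in the graph norm; by this bound the convergence also holds in $L_\infty(X)$, so $f_n g_n \to fg$ in $L_\infty(X)$ (hence in $L_2(X)$) while $A(f_n g_n) = (Af_n)g_n + f_n(Ag_n) \to (Af)g + f(Ag)$ in $L_2(X)$; since $A$ is closed, $fg \in D(A)$ and $A(fg) = (Af)g + f(Ag)$. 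The same closure argument, applied to $\overline{f_n} \in \cd$ and $A\overline{f_n} = -\overline{Af_n}$, shows that $D(A)$ is stable under complex conjugation with $A\overline f = -\overline{Af}$; consequently $iA$, hence each $U_t$, commutes with complex conjugation. Finally $\one \in \cd \subseteq D(A)$ and $A\one = A(\one\cdot\one) = 2A\one$, so $A\one = 0$ and $U_t\one = \one$.

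The core of the argument is the multiplicativity of $U_t$ on $D(A)$, the exponentiated form of the Leibniz rule. Put $\cd_\infty := \bigcap_{n\ge 1} D(A^n)$; this is a core for $A$, it is invariant under each $U_t$, it is contained in $D(A) \subseteq L_\infty(X)$, and by the previous paragraph it is a subalgebra of $L_\infty(X)$ (for $f,g \in \cd_\infty$ the element $A(fg) = (Af)g + f(Ag)$ is again a product of elements of $D(A)$, hence in $D(A)$, and one iterates). Fix $f,g \in \cd_\infty$ and set $\phi(t) := U_t(fg) - (U_t f)(U_t g)$. Because $f,g \in \cd_\infty$, the curves $t \mapsto U_t f$ and $t \mapsto U_t g$ are $C^1$ maps into the Banach algebra $L_\infty(X)$ with derivatives $t \mapsto i\,U_t(Af)$ and $t \mapsto i\,U_t(Ag)$ (here one uses $Af,Ag \in D(A)$ and that the graph norm dominates $\|\cdot\|_\infty$). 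Hence $\phi$ is differentiable as an $L_2(X)$-valued curve, with
\[
\phi'(t) = i\,A\,U_t(fg) - i\big[(A U_t f)(U_t g) + (U_t f)(A U_t g)\big];
\]
since $U_t f, U_t g \in D(A)$ and $A$ is a derivation on $D(A)$, the bracket equals $A\big((U_t f)(U_t g)\big)$, so $\phi(t) \in D(A)$ and $\phi'(t) = iA\,\phi(t)$ for all $t$, while $\phi(0) = 0$. By uniqueness of solutions of this abstract Cauchy problem (whose solution operators are the $U_t$), $\phi \equiv 0$. Approximating arbitrary $f,g \in D(A)$ in the graph norm — hence in $L_\infty(X)$ — by elements of $\cd_\infty$, and using that each $U_t$ is an isometry for the graph norm, we obtain $U_t(fg) = (U_t f)(U_t g)$ for all $f,g \in D(A)$, and by linearity and iteration $U_t\big(p(f_1,\dots,f_k)\big) = p(U_t f_1, \dots, U_t f_k)$ for all $f_1,\dots,f_k \in D(A)$ and all polynomials $p$.

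It remains to show $U_t L_\infty(X) \subseteq L_\infty(X)$, and this is where the hypotheses — which control $A$ only at the $L_2$ level and on a subspace merely dense in $L_2(X)$ — must be lifted to the $L_\infty$ scale; this is the main obstacle. Since $L_2(X)$ is separable and $D(A)$ is dense in it, there is a countable family $(f_i)_{i\in\Ni} \subseteq D(A)$ generating $\cb$ modulo $\mu$-null sets, and we may take each $f_i$ real-valued (replacing it by its real and imaginary parts, which remain in $D(A)$). As $U_t$ commutes with $A$, the closed-graph bound gives $\sup_t\|U_t f_i\|_\infty =: C_i < \infty$ and $\|f_i\|_\infty\le C_i$. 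Let $Q := \prod_i [-C_i, C_i]$, a compact metric space, and define measurable maps $F, F_t \colon X \to Q$ by $F(x) := (f_i(x))_i$, $F_t(x) := (U_t f_i(x))_i$; put $\lambda := F_*\mu$ and $\lambda_t := (F_t)_*\mu$. For a polynomial $p$ in finitely many coordinates, multiplicativity gives $p \circ F_t = U_t(p \circ F)$, whence $\int_Q p\,d\lambda_t = \langle U_t(p\circ F), \one\rangle = \langle p\circ F, U_{-t}\one\rangle = \int_Q p\,d\lambda$; since such polynomials are dense in $C(Q)$, this forces $\lambda_t = \lambda$. Now let $h \in L_\infty(X)$. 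As $(f_i)$ generates $\cb$, $h = \psi \circ F$ for some bounded Borel $\psi$ on $Q$; choosing continuous, uniformly bounded $\psi_n$ on $Q$ with $\psi_n \to \psi$ in $L_2(Q,\lambda)$ and approximating each $\psi_n$ uniformly by polynomials, one gets $U_t(\psi_n \circ F) = \psi_n \circ F_t$. Because $\lambda_t = \lambda$, both $\psi_n \circ F \to h$ and $\psi_n \circ F_t \to \psi \circ F_t$ in $L_2(X)$, so $U_t h = \psi \circ F_t \in L_\infty(X)$. Thus condition~\ref{tkoop101-2} holds for $(U_t)$, and Theorem~\ref{tkoop101} yields condition~\ref{tkoop101-1}.
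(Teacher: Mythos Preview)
The paper does not give its own proof of this theorem; it is merely quoted from \cite{GalP} as a result ``of the same nature'' as Theorem~\ref{tkoop101}, so there is no argument in the paper to compare against. Your proof is correct, and it is genuinely different from the original proof in \cite{GalP}, which could not have invoked Theorem~\ref{tkoop101}.

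The closed-graph bound $\|f\|_\infty \le c\,(\|f\|_2 + \|Af\|_2)$ is the key device that converts the purely $L_2$-level hypotheses into $L_\infty$-control: it makes the extension of the Leibniz rule from $\cd$ to all of $D(A)$ routine, and it supplies the $L_\infty$-differentiability of $t\mapsto U_tf$ for $f\in\cd_\infty$ needed in the Cauchy-problem argument for multiplicativity. The step with real content is the last one, where multiplicativity on $D(A)$ must be promoted to invariance of $L_\infty$; your construction of the embedding $F\colon X\to Q=\prod_i[-C_i,C_i]$ together with the equality $(F_t)_*\mu=F_*\mu$ (a consequence of $U_t\one=\one$ and multiplicativity on polynomials in the $f_i$) handles this cleanly. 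That step uses in an essential way that $\mu$ is finite (compactness of $Q$ and Stone--Weierstrass) and that $U$ is unitary with $U_t\one=\one$, which is consistent with the paper's remark that the Gallavotti--Pulvirenti result does not extend to general $C_0$-group generators or to merely $\sigma$-finite~$\mu$.
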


The theorem of Gallavotti and Pulvirenti does not have an extension 
where $A$ is merely a $C_0$-group generator and it is essential in \cite{GalP}
that the measure $\mu$ is finite.

The main application of Theorems~\ref{tkoop101} and \ref{tkoop207}
is a characterization of those $C_0$-groups on $L_2(X)$ which 
are pointwise the product of a Koopman operator and a multiplication operator.

\begin{thm} \label{tkoop311}
Let $(X,\cb,\mu)$ be a standard Borel probability space.
Let $U$ be a unitary $C_0$-group on $L_2(X)$ preserving $L_\infty(X)$.
Assume that $\one\in D(A)$ with $A\one\in L_\infty(X)$, where $A$ is the generator of $U$.
Then the following are equivalent.
\begin{tabeleq}
\item \label{tkoop311-1}
For all $t\in\Ri$ 
there exists an a.e.\ invertible, measurable
and measure-preserving map $T_t \colon X\to X$ and a function 
$\psi_t \colon X\to \Ci$ such that 
$U_tf=\psi_t\cdot (f\circ T_t)$ for all $f\in L_2(X)$.
\item \label{tkoop311-2}
For all $t \in \Ri$ one has $|U_t \one| = 1$ a.e.
Moreover, 
$D(A)\cap L_\infty(X)$  is an algebra and
$A - (A\one) I$ is a derivation on $D(A)\cap L_\infty(X)$.
\end{tabeleq}
\end{thm}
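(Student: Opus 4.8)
The plan is to \emph{untwist} $U$ by the cocycle $\psi_t:=U_t\one$ and to reduce both implications to Theorem~\ref{tkoop101} applied to one auxiliary group. Observe first that $|U_t\one|=1$ a.e.\ in both conditions: it is part of \ref{tkoop311-2}, and under \ref{tkoop311-1} it follows from unitarity together with measure preservation, since $\int|\psi_t|^2\,|f\circ T_t|^2=\int|f|^2=\int|f\circ T_t|^2$ for every $f$. Thus $\psi_t$ is unimodular, $\psi_t^{-1}=\overline{\psi_t}$, and multiplication by $\psi_t^{\pm1}$ is a unitary operator preserving $L_\infty$. Differentiating $|U_t\one|^2\equiv1$ at $t=0$ — legitimate because $\one\in D(A)$, so $t\mapsto U_t\one$ is $C^1$ into $L_2$ with derivative $U_tA\one$ — gives $\RRe(A\one)=0$, i.e.\ $\overline{A\one}=-A\one$; hence $B:=A-(A\one)I$ is again skew-adjoint, $D(B)=D(A)$ because $(A\one)I$ is bounded (as $A\one\in L_\infty$), and $(e^{tB})_{t\in\Ri}$ is a unitary $C_0$-group. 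The pivot of the whole argument is the identity
\[
U_t \;=\; (U_t\one)I\;e^{tB}\qquad(t\in\Ri),
\]
equivalently $e^{tB}=(\overline{U_t\one})I\,U_t$. Granting it, $e^{tB}$ is a unitary $C_0$-group, $D(B)\cap L_\infty=D(A)\cap L_\infty$, on this common set $B=A-(A\one)I$ is exactly the operator occurring in \ref{tkoop311-2}, and $L_\infty$-invariance of $U$ and of $e^{tB}$ are equivalent (via multiplication by $\psi_t^{\pm1}$); so Theorem~\ref{tkoop101} applied to $e^{tB}$ converts \ref{tkoop311-2} into ``$e^{tB}f=f\circ T_t$ for a.e.\ invertible measure-preserving $T_t$'' and back, whence $U_tf=(U_t\one)(f\circ T_t)$.

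For \ref{tkoop311-1}$\Rightarrow$\ref{tkoop311-2} I would begin from $U_tf=\psi_t\,(f\circ T_t)$ with $\psi_t=U_t\one$ unimodular. Substituting this into $U_{t+s}=U_tU_s$ and evaluating at $f=\one$ yields the cocycle identity $\psi_{t+s}=\psi_t\,(\psi_s\circ T_t)$, and feeding it back gives $T_{t+s}=T_s\circ T_t$ a.e.; hence $W_tf:=(\overline{\psi_t})I\,U_tf=f\circ T_t$ genuinely defines a one-parameter group. It is strongly continuous: $\|W_tf-f\|_2\le\|U_tf-f\|_2+\|(\overline{\psi_t}-\one)f\|_2$, the second term tending to $0$ first for $f\in L_\infty$ (since $\psi_t\to\one$ in $L_2$) and then for all $f$ by density together with $\|\overline{\psi_t}-\one\|_\infty\le2$. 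So $W=(W_t)$ is the unitary Koopman $C_0$-group of the measure-preserving flow $(T_t)$, and Theorem~\ref{tkoop101} gives that $L_\infty$ is $W$-invariant, that $D(\widetilde B)\cap L_\infty$ is an algebra, and that its generator $\widetilde B$ is a derivation on $D(\widetilde B)\cap L_\infty$. Differentiating $W_tf=(\overline{\psi_t})I\,U_tf$ at $t=0$ for $f\in D(A)\cap L_\infty$ shows $f\in D(\widetilde B)$ and $\widetilde Bf=Af+\overline{A\one}\,f=(A-(A\one)I)f$, while differentiating $U_tg=(\psi_t)I\,W_tg$ for $g\in D(\widetilde B)\cap L_\infty$ shows $g\in D(A)$; hence $D(\widetilde B)\cap L_\infty=D(A)\cap L_\infty$ and $A-(A\one)I=\widetilde B$ there, so the algebra/derivation conclusions transfer verbatim to $A$, which together with $|U_t\one|=1$ is precisely \ref{tkoop311-2}.

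For \ref{tkoop311-2}$\Rightarrow$\ref{tkoop311-1} I would build the flow from $B=A-(A\one)I$. First, $e^{tB}$ preserves $L_\infty$: since $A\one\in L_\infty$, the variation-of-constants (Dyson) expansion
\[
e^{tB}f=\sum_{n\ge0}(-1)^n\!\!\int\limits_{0<s_1<\cdots<s_n<t}\!\!e^{(t-s_n)A}\,(A\one)I\,e^{(s_n-s_{n-1})A}\,(A\one)I\cdots(A\one)I\,e^{s_1A}f\;ds
\]
has every summand mapping $L_\infty$ into $L_\infty$, and it converges in $\cb(L_\infty)$ because $\|e^{sA}\|_{\cb(L_\infty)}$ is bounded on compact $s$-intervals and $\|(A\one)I\|_{\cb(L_\infty)}=\|A\one\|_\infty$; comparing with the same series, which converges in $\cb(L_2)$, shows $e^{tB}$ maps $L_\infty$ into $L_\infty$ (here $\mu(X)<\infty$, so $L_\infty\subseteq L_2$). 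Next I prove the pivotal identity $U_t=(U_t\one)I\,e^{tB}$: for $f\in D(A)\cap L_\infty$ — a subspace dense in $L_2$, e.g.\ via the mollifications $\frac1\varepsilon\int_0^\varepsilon U_sf\,ds$ — the curve $y(t):=(U_t\one)\,(e^{tB}f)$ lies in $D(A)\cap L_\infty$ (this set being an algebra by \ref{tkoop311-2}, and $e^{tB}$ preserving $L_\infty$), satisfies $y(0)=f$, and, expanding $A\big((U_t\one)(e^{tB}f)\big)$ by the derivation property of $A-(A\one)I$ on $D(A)\cap L_\infty$ and using $Ae^{tB}f=Be^{tB}f+(A\one)e^{tB}f$, one obtains
\[
\tfrac{d}{dt}\,y(t)=(U_tA\one)(e^{tB}f)+(U_t\one)(Be^{tB}f)=A\,y(t).
\]
By uniqueness of solutions of $\dot y=Ay$ one gets $y(t)=U_tf$, and since both sides are bounded operators agreeing on a dense set, $U_t=(U_t\one)I\,e^{tB}$ on $L_2$. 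Now $e^{tB}$ is a unitary $C_0$-group preserving $L_\infty$ whose generator $B$ is a derivation on $D(B)\cap L_\infty=D(A)\cap L_\infty$, so Theorem~\ref{tkoop101} produces a.e.\ invertible measure-preserving $T_t$ with $e^{tB}f=f\circ T_t$, whence $U_tf=(U_t\one)(f\circ T_t)$, i.e.\ \ref{tkoop311-1} with $\psi_t=U_t\one$.

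The step I expect to be the real obstacle is the interaction between the $L_2$- and $L_\infty$-structures: one needs that a unitary $C_0$-group preserving $L_\infty$ is locally bounded on $L_\infty$ (so that the perturbation series for $e^{tB}$ converges in $\cb(L_\infty)$ and $D(A)\cap L_\infty$ is $L_2$-dense), and one must justify that the product curves $t\mapsto(U_t\one)(e^{tB}f)$ and $t\mapsto(\overline{\psi_t})U_tf$ are differentiable into $L_2$ with the naive product rule — which is safe precisely because in each product one factor stays bounded in $L_\infty$ (indeed $\|U_t\one\|_\infty=1$) while the other is $C^1$ into $L_2$ and locally $L_\infty$-bounded. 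Everything else — the cocycle identity, strong continuity of the untwisted group, skew-adjointness of $B$, and transporting the algebra/derivation structure across multiplication by the unimodular $U_t\one$ — is routine once these two regularity facts are in hand.
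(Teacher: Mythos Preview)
Your proof is correct, and the overall architecture matches the paper's: both untwist $U$ by the cocycle $\psi_t=U_t\one$ to obtain a Koopman group $V_t=(U_t\one)^{-1}U_t$, and both invoke Theorem~\ref{tkoop101} (or its non-unitary analogue) for this $V$. For \ref{tkoop311-1}$\Rightarrow$\ref{tkoop311-2} your argument is essentially the paper's Proposition~\ref{pkoop306} plus Lemma~\ref{lkoop308}, specialised to the unitary case.

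The genuine difference is in \ref{tkoop311-2}$\Rightarrow$\ref{tkoop311-1}. The paper proves the more general Theorem~\ref{tkoop310} first: it shows that $V=e^{t(A-(A\one)I)}$ preserves $L_\infty$ by passing to $L_1$ via Theorem~\ref{tkoop206}, using the Trotter--Kato product formula to prove that $V^*$ and the $L_1$-perturbation of $\widehat U$ are consistent, and then dualising back; a second application of Trotter--Kato then yields $U_t=\psi_t V_t$ with $\psi_t=\exp\int_0^t(A\one)\circ T_r\,dr$. You replace both Trotter--Kato steps: the Dyson expansion gives $L_\infty$-invariance of $e^{tB}$ directly (using the local $L_\infty$-bound on $U_t$ from Theorem~\ref{tkoop206}), and the identity $U_t=(U_t\one)\,e^{tB}$ is obtained by solving $\dot y=Ay$ and invoking uniqueness, where the product-rule computation is justified because $U_tA\one\in L_\infty$ and $\|e^{tB}f\|_\infty$ is locally bounded. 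Your route is arguably more elementary and makes the r\^ole of the derivation identity very transparent in the ODE step; the paper's route, on the other hand, works verbatim without unitarity and without ever needing $|U_t\one|=1$, which is why it yields Theorem~\ref{tkoop310} as well. The one technical point you should make explicit is that the Dyson integrals, taken as $L_2$-Bochner integrals, land in $L_\infty$ with the expected bound (pair against $h\in L_1\cap L_2$ and use $\|\widetilde U_s\|_{\infty\to\infty}\le M e^{\omega|s|}$); this is the $L_2/L_\infty$ interaction you correctly flagged as the delicate step.
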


We also have an extension of this theorem for $C_0$-groups which are not necessarily 
unitary, see Theorem~\ref{tkoop310}.
The above result can be viewed as the one-parameter counterpart of the classical 
Banach--Lamperti theorem, \cite{Lamperti} Theorem~3.1, classifying that all isometries of 
$L_p(X)$ for all $p \in [1,\infty) \setminus \{ 2 \} $ are of the form 
\begin{equation}
f\mapsto \psi\cdot (f\circ T)
\label{eSkoop1;6}
\end{equation} 
for some pointwise map $T \colon X\to X$ and 
$\psi \colon X\to (0,\infty)$.
In \cite{GGM} the authors also proved that unitary positivity preserving maps are 
of the form (\ref{eSkoop1;6}).

In Section~\ref{Skoop2} we prove Theorem~\ref{tkoop101} and its 
extension for general $C_0$-groups.
As a tool and byproduct 
we prove in Theorem~\ref{tkoop206} that if $(X,\cb,\mu)$ is a finite 
measure space and $S$ is a $C_0$-group in $L_2(X)$, then 
$S$ extends consistently to a $C_0$-group on $L_1(X)$ if and only if
the dual group $S^*$ leaves $L_\infty(X)$ invariant.
This is a new result in (semi)group theory.
In Section~\ref{Skoop3} we prove Theorem~\ref{tkoop310},
characterizing weighted non-singular one-parameter $C_0$-groups,
which has Theorem~\ref{tkoop311} as corollary.
It turns out that in Theorem~\ref{tkoop311}\ref{tkoop311-1} one has 
$\psi_t \in L_\infty(X)$ and 
\[
\psi_{t+s} = \psi_t \cdot (\psi_s \circ T_t) \mbox{ a.e.}
\]
for all $t,s \in \Ri$.
Finally, in Section~\ref{Skoop4} we determine the form of such 
$\psi$, assuming a differentiability condition.

\section{Derivations} \label{Skoop2}

If $(X,\cb,\mu)$ is a measure space and $f,g \in L_2(X)$, then
we denote the inner product by 
$(f,g) = \int_X f \, \overline g \, d\mu$.
Moreover, if $f \in L_\infty(X)$ and $g \in L_1(X)$ then we
denote the duality by
$\langle f, g \rangle = \int_X f \, \overline g \, d\mu$.
If the measure space is clear from the context, then we abbreviate
$L_p = L_p(X)$ for all $p \in [1,\infty]$.
Further, let $p,q \in [1,\infty]$, let $U$ be a one-parameter (semi)group on $L_p(X)$
and $V$ be a one-parameter (semi)group on $L_q(X)$.
We say that $U$ and $V$ are {\bf consistent} if
$U_t f = V_t f$ for all $t \in \Ri$ (or $t \in (0,\infty)$)
and $f \in L_p(X) \cap L_q(X)$.

For the proof of Theorem~\ref{tkoop101} we need several lemmas
as preparation.
The first two seem to be folklore.

\begin{lemma} \label{lkoop203}
Let $(X,\cb,\mu)$ be a measure space, $c > 0$ and $E \colon L_2(X) \to L_2(X)$
be a bounded operator.
Suppose that $\|E f\|_\infty \leq c \, \|f\|_\infty$ for all
$f \in L_2(X) \cap L_\infty(X)$.
Then there exist unique $\widehat E \in \cl(L_1(X))$ and
$\widetilde E \in \cl(L_\infty(X))$ such that
$\widehat Ef = E^* f$ for all $f \in L_1(X) \cap L_2(X)$
and $\widetilde Ef = E f$ for all $f \in L_\infty(X) \cap L_2(X)$.
Moreover, $\|\widetilde E\|_{\infty \to \infty} \leq c$ and
$\widetilde E = (\widehat E)^*$.
\end{lemma}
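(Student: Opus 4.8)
The plan is to build $\widehat E$ directly as the restriction of the adjoint $E^*$ and $\widetilde E$ as the restriction of $E$, then verify boundedness on the relevant norms by duality. First I would record that $E^*$, being the Hilbert-space adjoint of a bounded operator on $L_2$, is itself bounded on $L_2$; the content of the lemma is that its hypotheses force $E^*$ to extend to $L_1$ and $E$ to extend to $L_\infty$. The key observation is a duality estimate: for $f \in L_1 \cap L_2$ one has
\[
\|E^* f\|_1 = \sup \bigl\{ |\langle g, \overline{E^* f}\rangle| : g \in L_2 \cap L_\infty,\ \|g\|_\infty \le 1 \bigr\},
\]
using that $L_2 \cap L_\infty$ is norming for $L_1$ (this requires $\sigma$-finiteness on the support of $f$, which is automatic since $f \in L_2$ is supported on a $\sigma$-finite set). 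Rewriting $\langle g, \overline{E^* f}\rangle = (g, E^* f) = (E g, f)$ and bounding $|(Eg,f)| \le \|Eg\|_\infty \|f\|_1 \le c\|g\|_\infty \|f\|_1$, we get $\|E^* f\|_1 \le c \|f\|_1$. Hence $E^*$ restricted to the dense subspace $L_1 \cap L_2 \subset L_1$ is bounded for $\|\cdot\|_1$ and extends uniquely to $\widehat E \in \cl(L_1)$.

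For $\widetilde E$, the hypothesis already gives $\|Ef\|_\infty \le c\|f\|_\infty$ for $f \in L_2 \cap L_\infty$. Since the measure is finite here (or at least on the relevant part), $L_2 \cap L_\infty$ is not dense in $L_\infty$ in norm, so I cannot simply extend by density in the uniform topology. Instead I would \emph{define} $\widetilde E := (\widehat E)^*$, the Banach-space adjoint of $\widehat E \in \cl(L_1)$, which lives in $\cl((L_1)^*) = \cl(L_\infty)$ with $\|\widetilde E\|_{\infty \to \infty} = \|\widehat E\|_{1 \to 1} \le c$. It then remains to check the two compatibility statements: that $(\widehat E)^* f = Ef$ for all $f \in L_\infty \cap L_2$, and that $\widehat E f = E^* f$ for $f \in L_1 \cap L_2$ (the latter holds by construction). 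For the former, for $f \in L_\infty \cap L_2$ and $g \in L_1 \cap L_2$ compute $\langle (\widehat E)^* f, g\rangle = \langle f, \widehat E g\rangle = \langle f, E^* g\rangle = \int f\,\overline{E^* g}\,d\mu = (f, E^* g)_{L_2} = (Ef, g)_{L_2} = \langle Ef, g\rangle$; since $L_1 \cap L_2$ is dense in $L_1$ and both $(\widehat E)^* f$ and $Ef$ lie in $L_\infty \subset (L_1)^*$, they agree. Finally, uniqueness of $\widehat E$ and $\widetilde E$ follows from density of $L_1 \cap L_2$ in $L_1$ and the fact that an element of $L_\infty$ is determined by its action on $L_1 \cap L_2$.

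The one point needing care — and the mild obstacle — is handling the measure space at full generality (the lemma is stated for an arbitrary measure space, not necessarily finite or $\sigma$-finite). The density and norming statements ``$L_1 \cap L_2$ dense in $L_1$'' and ``$L_2 \cap L_\infty$ norming for $L_1$'' both go through because any $L_1$ or $L_2$ function is supported on a $\sigma$-finite set, and the identification $(L_1)^* = L_\infty$ holds without restriction for $L_1$ over any measure space (the localizability issue affects only $(L_\infty)^*$). I would state these reductions explicitly but not belabor them. Everything else is the routine duality bookkeeping sketched above.
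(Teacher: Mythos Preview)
Your approach is essentially identical to the paper's: bound $\|E^*f\|_1$ by duality against $g\in L_2\cap L_\infty$, extend $E^*$ by density to $\widehat E\in\cl(L_1)$, then set $\widetilde E:=(\widehat E)^*$ and check compatibility; the paper is terser but follows exactly this line. One small correction to your closing remark: the identification $(L_1)^*=L_\infty$ does \emph{not} hold for arbitrary measure spaces (it requires localizability), though this is harmless here since every function in play is supported on a $\sigma$-finite set and the paper tacitly works under the same convention.
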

\begin{proof}
Let $f \in L_1 \cap L_2$.
Then $|(E^* f, g)| = |(f, Eg)| \leq \|f\|_1 \, \|E g\|_\infty \leq c \, \|f\|_1 \, \|g\|_\infty$
for all $g \in L_2 \cap L_\infty$.
So $E^* f \in L_1$ and $\|E^* f\|_1 \leq c \, \|f\|_1$.
Since $L_1 \cap L_2$ is dense in $L_1$ it follows that there
exists a unique $\widehat E \in \cl(L_1)$ such that
$\widehat Ef = E^* f$ for all $f \in L_1 \cap L_2$.
Choose $\widetilde E = (\widehat E)^*$.
Then the existence follows.
The uniqueness on $L_\infty$ is a consequence of the
w$^*$-density of $L_2 \cap L_\infty$ in $L_\infty$.
\end{proof}

As a consequence one has the next lemma.

\begin{lemma} \label{lkoop204}
Let $(X,\cb,\mu)$ be a measure space
and $S$ a semigroup on $L_2(X)$.
Suppose that for all $t \in (0,1]$ there exists a $c > 0$ 
such that $\|S_t f\|_\infty \leq c \, \|f\|_\infty$
for all $f \in L_2(X) \cap L_\infty(X)$.
Then there exist a unique semigroup $\widetilde S$ on $L_\infty(X)$
and a unique semigroup $\widehat S$ on $L_1(X)$
such that $\widetilde S$ is consistent with $S$
and $\widehat S$ is consistent with $S^*$.
Moreover, if there exists a $\tilde c \geq 1$ such that 
$\|S_t f\|_\infty \leq \tilde c \, \|f\|_\infty$
for all $t \in (0,1]$ and $f \in L_2(X) \cap L_\infty(X)$, then
$\|\widetilde S_t\|_{\infty \to \infty} = \|\widehat S_t\|_{1 \to 1} \leq c \, e^{t \log c}$
for all $t \in (0,\infty)$.
\end{lemma}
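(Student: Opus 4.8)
The plan is to bootstrap Lemma~\ref{lkoop203} from the single-operator setting to the semigroup setting. First I would show that the local bound $\|S_t f\|_\infty \leq c_t\,\|f\|_\infty$ on $(0,1]$ self-improves to a bound on all of $(0,\infty)$ with exponential growth: given $t>0$, write $t = n + s$ with $n \in \Ni_0$ and $s \in (0,1]$, so that $S_t = S_1^n S_s$, and hence $\|S_t f\|_\infty \leq c_1^n c_s \|f\|_\infty$; this already gives existence of some bound $c_t$ for every $t$. Applying Lemma~\ref{lkoop203} to each $E = S_t$ (with $c = c_t$) produces, for each $t$, operators $\widetilde S_t \in \cl(L_\infty)$ with $\widetilde S_t = S_t$ on $L_2 \cap L_\infty$ and $\widehat S_t = (\widetilde S_t)^* = S_t^*$ on $L_1 \cap L_2$. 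The uniqueness part of Lemma~\ref{lkoop203} is what makes these assignments well defined.

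Next I would check the semigroup law. For $\widetilde S$, both $\widetilde S_{t+s}$ and $\widetilde S_t \widetilde S_s$ are bounded operators on $L_\infty$ that agree with $S_{t+s} = S_t S_s$ on $L_2 \cap L_\infty$ — here one uses that $S$ maps $L_2 \cap L_\infty$ into itself (by the hypothesis on $\|S_t f\|_\infty$) so the composition makes sense on that subspace — and hence they agree on all of $L_\infty$ by the w$^*$-density of $L_2 \cap L_\infty$, exactly the density statement invoked in Lemma~\ref{lkoop203}. Dually, $\widehat S_{t+s} = \widehat S_s \widehat S_t$ follows either by the same argument on $L_1$ using the density of $L_1 \cap L_2$, or simply by taking adjoints. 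Consistency of $\widetilde S$ with $S$ and of $\widehat S$ with $S^*$ is immediate from the construction, and uniqueness of $\widetilde S$ and $\widehat S$ as consistent semigroups follows once more from density.

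Finally, for the quantitative statement, suppose $\|S_t f\|_\infty \leq \tilde c\,\|f\|_\infty$ for all $t \in (0,1]$. Writing $t = n+s$ as above with $n = \lfloor t \rfloor$ and replacing $n$ by $t$ in the exponent (since $\tilde c \geq 1$), one gets $\|S_t f\|_\infty \leq \tilde c^{\,t+1}\|f\|_\infty = \tilde c \cdot e^{t \log \tilde c}\|f\|_\infty$ for all $t > 0$; with $c = \tilde c$ this is the asserted bound $\|\widetilde S_t\|_{\infty\to\infty} \leq c\,e^{t\log c}$. Passing this bound through Lemma~\ref{lkoop203} gives the same estimate for $\|\widehat S_t\|_{1\to 1}$, and the equality $\|\widetilde S_t\|_{\infty\to\infty} = \|\widehat S_t\|_{1\to 1}$ is just the fact that a bounded operator and its Banach-space adjoint have the same norm, applied to $\widetilde S_t = (\widehat S_t)^*$. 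I do not expect a serious obstacle here; the only point requiring a little care is keeping track of which density (w$^*$-density of $L_2 \cap L_\infty$ in $L_\infty$ versus norm-density of $L_1 \cap L_2$ in $L_1$) is used to pin down each operator and each identity, and ensuring that all compositions are first formed on the common dense subspace $L_2 \cap L_\infty$ before extending.
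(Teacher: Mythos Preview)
Your proposal is correct and is exactly the natural fleshing-out of what the paper intends: the paper gives no proof at all for this lemma, merely stating ``As a consequence one has the next lemma'' immediately after Lemma~\ref{lkoop203}. Your argument---apply Lemma~\ref{lkoop203} pointwise in $t$, verify the semigroup law via density of $L_2\cap L_\infty$, and iterate the bound on $(0,1]$ to get the exponential estimate---is precisely the routine verification the authors are leaving to the reader.
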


The next lemma is less known.

\begin{lemma} \label{lkoop205}
Let $(X,\cb,\mu)$ be a measure space, $c \geq 1$
and $S$ a $C_0$-semigroup on $L_2(X)$ with generator $A$.
Suppose that $\|S_t f\|_\infty \leq c \, \|f\|_\infty$
for all $t \in (0,1]$ and $f \in L_2(X) \cap L_\infty(X)$.
Then $D(A) \cap L_\infty(X)$ is dense in $L_2(X)$.
\end{lemma}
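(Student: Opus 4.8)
The plan is to exploit the contractivity-type bound on $L_\infty$ together with the smoothing effect of integrating the semigroup. First I would invoke Lemma~\ref{lkoop204} to obtain the consistent semigroup $\widetilde S$ on $L_\infty(X)$, with $\|\widetilde S_t\|_{\infty\to\infty}\le c\,e^{t\log c}$ for all $t>0$. The standard device for producing elements of $D(A)$ is to form, for $f\in L_2(X)$ and $s>0$, the average $R_s f := \frac1s\int_0^s S_r f\,dr$: it is classical semigroup theory that $R_s f\in D(A)$, that $A R_s f = \frac1s(S_s f - f)$, and that $R_s f \to f$ in $L_2(X)$ as $s\downarrow 0$. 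So $\{R_s f : s>0,\ f\in L_2\}$ is already dense in $L_2(X)$; the point is to show that one can take $f$ from a dense subclass so that $R_s f$ additionally lies in $L_\infty(X)$.

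The key step is therefore: if $f\in L_2(X)\cap L_\infty(X)$ and $s>0$, then $R_s f\in L_\infty(X)$. This follows because for each $r\in(0,s]$ we have $\|S_r f\|_\infty = \|\widetilde S_r f\|_\infty \le c\,e^{r\log c}\,\|f\|_\infty \le c\,e^{s\log c}\,\|f\|_\infty$ (using that $S$ and $\widetilde S$ are consistent on $L_2\cap L_\infty$), so the integrand is uniformly bounded in $L_\infty$; since the Bochner integral defining $R_s f$ converges in $L_2$, after passing to a subsequence of Riemann sums one sees the pointwise a.e.\ limit inherits the bound $\|R_s f\|_\infty\le c\,e^{s\log c}\,\|f\|_\infty$. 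Hence $R_s f\in D(A)\cap L_\infty(X)$ for every $f\in L_2\cap L_\infty$ and every $s>0$.

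It remains to let $s\downarrow 0$: for fixed $f\in L_2\cap L_\infty$ we have $R_s f\to f$ in $L_2(X)$, so the closure of $D(A)\cap L_\infty(X)$ in $L_2(X)$ contains $L_2(X)\cap L_\infty(X)$, which in turn is dense in $L_2(X)$. This proves the claim. The main obstacle is the (entirely routine) measure-theoretic point in the key step, namely justifying that the $L_\infty$-bound survives the passage from the Bochner/Riemann-sum approximation to the integral; this is handled by selecting a sequence of Riemann sums converging both in $L_2$ and pointwise a.e., each of which obeys the bound $c\,e^{s\log c}\|f\|_\infty$ by the triangle inequality in $L_\infty$, and observing that the bound is preserved in the a.e.\ limit.
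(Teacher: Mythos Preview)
Your proof is correct and follows essentially the same strategy as the paper's: smooth the semigroup in the time variable to produce elements of $D(A)$ that approximate a given $f\in L_2\cap L_\infty$, and use the uniform $L_\infty$ bound on the orbit to ensure the result stays in $L_\infty$. The only differences are cosmetic: the paper convolves with a mollifier $\varphi_n\in C_c^\infty(0,\infty)$ rather than using the Ces\`aro average $\tfrac1s\int_0^s$, and it verifies the $L_\infty$ bound on the integral by duality (testing against $g\in L_1\cap L_2$ and estimating $|(f_n,g)|\le M_n\|f\|_\infty\|g\|_1$) rather than via Riemann sums and a.e.\ convergence. Both devices are standard; the duality route is perhaps a touch cleaner since it sidesteps the subsequence selection you flag as the ``routine obstacle,'' but your argument is entirely sound. (Incidentally, you do not actually need to invoke Lemma~\ref{lkoop204}: for small $s$ the hypothesis applies directly to $r\in(0,s]\subset(0,1]$, giving $\|S_r f\|_\infty\le c\,\|f\|_\infty$ without passing through~$\widetilde S$.)
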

\begin{proof}
Since $L_2 \cap L_\infty$ is dense in $L_2$, it suffices to show that
for all $f \in L_2 \cap L_\infty$ there exists a sequence $(f_n)_{n \in \Ni}$
in $D(A) \cap L_\infty$ such that $\lim f_n = f$ in $L_2$.
Fix $\varphi \in C_c^\infty(0,\infty)$ with $\int \varphi = 1$.
For all $n \in \Ni$ define $\varphi \in C_c^\infty(0,\infty)$ by
$\varphi_n(t) = n \, \varphi(n \, t)$.
Let $f \in L_2 \cap L_\infty$ and $n \in \Ni$.
Define $f_n \in L_2$ by
\[
f_n = \int_{(0,\infty)} \varphi_n(t) \, S_t f \, dt
.  \]
Then $f_n \in D(A)$.
Moreover, $\lim_{n \to \infty} f_n = f$ in $L_2$ since $S$ is a continuous
semigroup.
It remains to show that $f_n \in L_\infty$ for all $n \in \Ni$.
Let $n \in \Ni$ and $g \in L_1 \cap L_2$.
Then
\[
|\varphi_n(t) \, (S_t f,g)|
\leq |\varphi_n(t)| \, \|S_t f\|_\infty \, \|g\|_1
\leq |\varphi_n(t)| \, c \, e^{t \log c} \, \|f\|_\infty \, \|g\|_1
\]
for all $t \in (0,\infty)$.
Hence
\[
|(f_n,g)|
= \Big| \int_{(0,\infty)} \varphi_n(t) \, (S_t f,g) \, dt \Big|
\leq M_n \, \|f\|_\infty \, \|g\|_1
,  \]
where $M_n = \int_{(0,\infty)} |\varphi_n(t)| \, c \, e^{t \log c} \, dt < \infty$.
So $f_n \in L_\infty$ as required.
\end{proof}

\begin{remark} \label{rkoop206}
Under the conditions of Lemma~\ref{lkoop205} the space $D(A) \cap L_\infty$ is
even a core for $A$, since the space $D(A) \cap L_\infty$ is invariant
under $S$.
See \cite{EN} Proposition II.1.7.
\end{remark}

It seems that the next theorem is new.
Note that we do not assume a uniform bound of the type (\ref{etkoop206;10})
in Condition~\ref{tkoop206-2}.

\begin{thm} \label{tkoop206}
Let $(X,\cb,\mu)$ be a finite measure space.
Let $S$ be a $C_0$-group on $L_2(X)$.
Then the following are equivalent.
\begin{tabeleq}
\item \label{tkoop206-1}
The group $S$ extends consistently to a $C_0$-group on $L_1(X)$.
\item \label{tkoop206-2}
The space $L_\infty(X)$ is invariant under $S^*$, that is
$S_t^* (L_\infty(X)) \subset L_\infty(X)$ for all $t \in \Ri$.
\end{tabeleq}
If these conditions are satisfied, then there exist $M\geq 1$ and $\omega \geq 0$
such that 
\begin{equation}
\|S_t^* f\|_\infty \leq M \, e^{\omega |t|} \, \|f\|_\infty
\label{etkoop206;10}
\end{equation}
for all $t \in \Ri$ and $f \in L_\infty(X)$.
\end{thm}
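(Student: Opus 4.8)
The plan is to prove both implications, extracting a growth bound of the form (\ref{etkoop206;10}) along the way so that the final assertion comes for free. For the direction \ref{tkoop206-1}$\Rightarrow$\ref{tkoop206-2}: if $S$ extends consistently to a $C_0$-group $S^{(1)}$ on $L_1$, then the uniform boundedness principle applied on compact time-intervals gives $M \geq 1$ and $\omega \geq 0$ with $\|S^{(1)}_t\|_{1 \to 1} \leq M e^{\omega|t|}$ for all $t \in \Ri$. The dual operators $(S^{(1)}_t)^*$ then form a (w$^*$-continuous) group on $L_\infty$ with the same bound. Since $S^{(1)}$ is consistent with $S$ on $L_1 \cap L_2$, a density argument shows $(S^{(1)}_t)^*$ is consistent with $S_t^*$ on $L_\infty \cap L_2$ (here the finiteness of $\mu$ makes $L_\infty \subset L_2 \subset L_1$, which simplifies matters considerably and is the reason we never need to invoke $\sigma$-finiteness tricks). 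Hence $S_t^*(L_\infty \cap L_2) \subset L_\infty$; one then upgrades to $S_t^*(L_\infty) \subset L_\infty$ by w$^*$-density of $L_2 \cap L_\infty$ in $L_\infty$, and (\ref{etkoop206;10}) holds with these $M, \omega$.

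For the substantive direction \ref{tkoop206-2}$\Rightarrow$\ref{tkoop206-1}: the hypothesis is only that each $S_t^*$ maps $L_\infty$ into $L_\infty$, with a priori no bound. First I would establish that each $S_t^* \restriction L_\infty$ is automatically a bounded operator on $L_\infty$ — this is a closed graph argument, using that $S_t^*$ is bounded on $L_2$, that $L_\infty \hookrightarrow L_2$ continuously (finite measure!), and that $\|\cdot\|_\infty$-convergence implies a subsequence converging a.e., so the graph of $S_t^* \restriction L_\infty$ in $L_\infty \times L_\infty$ is closed. Thus for each $t$ there is a finite $c(t) = \|S_t^*\|_{\infty \to \infty}$. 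Next, to get a bound that is \emph{locally bounded in $t$} — which is what Lemma~\ref{lkoop204} needs — I would show $t \mapsto c(t)$ is bounded on $[0,1]$ (equivalently on any $[-n,n]$, then use the group law $c(t+s) \leq c(t)c(s)$ and $c(-t) = \|S_{-t}^*\|$): apply the uniform boundedness principle in the form of a Baire-category argument on $[0,1]$, writing $[0,1] = \bigcup_{k} \{ t : c(t) \leq k \}$ and noting each such set is closed by a Fatou/a.e.-convergence argument combined with strong continuity of $S^*$ on $L_2$; some $\{t : c(t) \le k\}$ has nonempty interior, and translating by the group law fills out $[0,1]$. Once $\sup_{t \in [0,1]} c(t) =: \tilde c < \infty$, Lemma~\ref{lkoop204} (applied to the group $S^*$, whose dual is $S$) produces a $C_0$... — more precisely it produces a \emph{semigroup} $\widehat{(S^*)}$ on $L_1$ consistent with $(S^*)^* = S$, for positive times, with norm bound $\le \tilde c\, e^{t \log \tilde c}$; running the same argument for the reversed group $(S_{-t})_{t>0}$ gives the negative times, and the two fit together into a genuine $C_0$-group on $L_1$ extending $S$. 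Strong continuity on $L_1$ follows because $D(A) \cap L_\infty$ (Lemma~\ref{lkoop205}, Remark~\ref{rkoop206}) is dense in $L_1$ and the group is uniformly bounded on compact intervals, so it suffices to check $t \mapsto S_t f$ is $L_1$-continuous for $f$ in that dense core, where it follows from $L_2$-continuity together with the $L_\infty$ bound and finiteness of $\mu$ (interpolation: $\|g\|_1 \le \mu(X)^{1/2}\|g\|_2$ only handles one side, but $\|S_t f - f\|_1 \le \mu(X)^{1/2}\|S_t f - f\|_2 \to 0$ directly, since here $L_2 \subset L_1$).

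The main obstacle is the passage from "$S_t^*$ preserves $L_\infty$ for each fixed $t$" to "the $L_\infty \to L_\infty$ norms are bounded uniformly for $t$ in compact sets" — without this, Lemma~\ref{lkoop204} does not apply and one cannot even assert that the candidate extension to $L_1$ is a $C_0$-semigroup rather than just a family of operators. I expect this to require two applications of Baire category: one (closed graph) to get finiteness of $c(t)$ for each $t$, and a second (uniform boundedness over $t \in [0,1]$) using strong continuity of the dual group on $L_2$ to see that the sublevel sets $\{t : c(t) \le k\}$ are closed. Everything else — consistency, the growth estimate (\ref{etkoop206;10}), and strong continuity on $L_1$ — is then routine given the lemmas already proved in this section.
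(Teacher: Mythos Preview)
Your proposal is correct and hits the same structural beats as the paper --- closed graph for per-$t$ finiteness of $c(t)=\|S_t^*\|_{\infty\to\infty}$, then a uniform bound on compact $t$-intervals, then consistency and strong continuity on $L_1$ --- but the mechanism you use for the uniform bound is genuinely different. The paper does not run Baire category on the sublevel sets of $t\mapsto c(t)$. Instead it first fixes $f\in L_\infty$, observes that $t\mapsto\|\widetilde S_t f\|_\infty=\sup\{|(f,S_t g)|:g\in L_2,\ \|g\|_1\le1\}$ is lower semicontinuous (hence Lebesgue-measurable), and then invokes a measure-theoretic translation trick borrowed from \cite{ABHN} Lemma~3.16.4: if the orbit norms were unbounded along $t_n\to t_0$, one finds a set $F\subset[0,t_0]$ of Lebesgue measure $>1$ on which they are bounded, and the translates $E_n=t_n-F\cap[0,t_n]$ have $\limsup E_n$ of positive measure yet consist of points where $\|\widetilde S_s\|=\infty$, a contradiction; the uniform boundedness principle then upgrades this to an operator-norm bound. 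Your route is more self-contained: you use lower semicontinuity one level up (of $c(t)$ itself, via $L_2$-convergence $\Rightarrow$ a.e.-convergent subsequence $\Rightarrow$ $L_\infty$-bound passes to the limit), and Baire on $[0,1]$ replaces both the ABHN lemma and the separate uniform boundedness step. The paper's approach has the virtue of isolating a reusable pointwise-orbit lemma; yours is shorter and avoids the external reference. For strong continuity on $L_1$ the paper argues weak continuity first and then cites the standard weak-$\Rightarrow$-strong fact, whereas your direct density argument via $\|\cdot\|_1\le\mu(X)^{1/2}\|\cdot\|_2$ on $L_2$ is equally valid and slightly more explicit. One small clean-up: you do not actually need $D(A)\cap L_\infty$ for the density step --- $L_2$ itself is already dense in $L_1$ and that suffices.
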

\begin{proof}
The implication `\ref{tkoop206-1}$\Rightarrow$\ref{tkoop206-2}' is trivial.
(Cf.\ the proof of Lemma~\ref{lkoop203}.)
So it remains to show `\ref{tkoop206-2}$\Rightarrow$\ref{tkoop206-1}'.

Let $t \in \Ri$.
It follows from the closed graph theorem that there exists a $c > 0$
such that $\|S_t^* f\|_\infty \leq c \, \|f\|_\infty$
for all $f \in L_\infty$.
Note that we use here that the measure $\mu$ is finite.
Also note that $c$ depends on $t$.
Hence by Lemma~\ref{lkoop204} there exists a one-parameter group
$\widehat S$ on $L_1$ and a one-parameter group
$\widetilde S$ on $L_\infty$ such that $\widehat S$ is consistent with $S$
and $\widetilde S$ is consistent with $S^*$.
Moreover, $\widetilde S_t = (\widehat S_t)^*$ for all $t \in \Ri$.

We shall show that $ \{ \widetilde S_t : t \in [2,3] \} $ is
bounded in $\cl(L_\infty)$.
By the uniform boundedness principle if suffices to show that
$ \{ \|\widetilde S_t f\|_\infty : t \in [2,3] \} $ is bounded for all $f \in L_\infty$.
For this we use the arguments as in the first step of the proof
of \cite{ABHN} Lemma~3.16.4.
Fix $f \in L_\infty$.
If $t \in \Ri$ then
\begin{eqnarray*}
\|\widetilde S_t f\|_\infty
& = & \sup \{ |\langle \widetilde S_t f, g\rangle| : g \in L_1 \mbox{ and } \|g\|_1 \leq 1 \}  \\
& = & \sup \{ |\langle \widetilde S_t f, g\rangle| : g \in L_2 \mbox{ and } \|g\|_1 \leq 1 \}  \\
& = & \sup \{ |(f, S_t g)| : g \in L_2 \mbox{ and } \|g\|_1 \leq 1 \}
.
\end{eqnarray*}
For each $g \in L_2$ the map
$t \mapsto |(f, S_t g)|$ is continuous by the strong continuity of $S$ on $L_2$.
So $t \mapsto \|\widetilde S_t f\|_\infty$ is lower semicontinuous and therefore  a measurable
function on $\Ri$.
This is the key assumption in the first step of the proof
of \cite{ABHN} Lemma~3.16.4.
In order to have the paper self-contained, we include the proof, with
minor modifications.
Suppose that $ \{ \|\widetilde S_t f\|_\infty : t \in [2,3] \} $ is not bounded.
Then there are $t_0,t_1,t_2,\ldots \in [2,3]$ such that $\lim_{n \to \infty} t_n = t_0$
and $\|\widetilde S_{t_n} f\|_\infty \geq n$ for all $n \in \Ni$.
Since $t \mapsto \|\widetilde S_t f\|_\infty$ is measurable, there are
$M > 0$ and a measurable set $F \subset [0,t_0]$ such that $\mu(F) > 1$ and
$\|\widetilde S_t f\|_\infty \leq M$ for all $t \in F$.
Let $n \in \Ni$.
Then
\[
n
\leq \|\widetilde S_{t_n} f\|_\infty
\leq \|\widetilde S_{t_n - t}\| \, \|\widetilde S_t f\|_\infty
\leq M \, \|\widetilde S_{t_n - t}\|
\]
for all $t \in F$.
So $\|\widetilde S_s\| \geq M^{-1} \, n$ for all $s \in E_n$, where
\[
E_n = \{ t_n - t : t \in F \cap [0,t_n] \}
.  \]
Note that $E_n$ is measurable and $\mu(E_n) \geq 1$ if
$|t_n - t_0| < \mu(F) - 1$.
Let $E = \limsup_{n \to \infty} E_n = \bigcap_{m=1}^\infty \bigcup_{n=m}^\infty E_n$.
Then $E$ is measurable and $\mu(E) \geq 1$.
In particular, $E \neq \emptyset$.
Moreover, $\|\widetilde S_s\| = \infty$ for all $s \in E$.
This is a contradiction.

Thus $ \{ \widetilde S_t : t \in [2,3] \} $ is
bounded in $\cl(L_\infty)$.
Since $\widetilde S_t = (\widehat S_t)^*$ it follows that
$ \{ \widehat S_t : t \in [2,3] \} $ is bounded in $\cl(L_1)$.
By the group property the set
$ \{ \widehat S_t : t \in [-1,1] \} $ is also bounded in $\cl(L_1)$.
Let $c = \sup \{ \|\widehat S_t\| : t \in [-1,1] \} < \infty$.
Let $g \in L_\infty$.
Then
$\lim_{t \to 0} \langle g, \widehat S_t f\rangle
= \lim_{t \to 0} (g, S_t f) = (g,f) = \langle g, f\rangle$
for all $f \in L_2$.
Since $L_2$ is dense in $L_1$ and $c < \infty$ it follows that
$\lim_{t \to 0} \langle g, \widehat S_t f\rangle = \langle g, f\rangle$
for all $f \in L_1$.
So $\widehat S$ is weakly continuous and hence $\widehat S$ is a $C_0$-group.
Finally, there are $M\geq 1$ and $\omega \geq 0$
such that $\|\widehat S_t\|_{1 \to 1} \leq M \, e^{\omega |t|}$ for all $t \in \Ri$.
Then
$\|\widetilde S_t\|_{\infty \to \infty} 
= |\widehat S_t\|_{1 \to 1}
\leq M \, e^{\omega |t|}$ for all $t \in \Ri$.
\end{proof}

We now turn to the proof of Theorem~\ref{tkoop101}.
The implication \ref{tkoop101-1}$\Rightarrow$\ref{tkoop101-2} in
Theorem~\ref{tkoop101} is a special case of the next proposition.

\begin{prop} \label{pkoop201}
Let $U$ be a $C_0$-group on $L_2(X)$
with generator $A$, where $(X,\cb,\mu)$ is a measure space.
Suppose that for every $t \in \Ri$ there exists a measurable
map $T_t \colon X \to X$ such that $U_t f = f \circ T_t$
for all $f \in L_2(X)$.
Then $U$ extends consistently to a w$^*$-continuous contraction
group on $L_\infty(X)$.
Moreover, $D(A) \cap L_\infty(X)$ is an algebra
and $A$ is a derivation on $D(A) \cap L_\infty(X)$.
\end{prop}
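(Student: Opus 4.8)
The plan is to extract the three conclusions one at a time. First I would establish that $U$ extends consistently to a w$^*$-continuous contraction group on $L_\infty$. Fix $t\in\Ri$. Since $T_t$ is measure preserving (in the setting of Theorem~\ref{tkoop101}; in the bare generality of this proposition one needs $T_t$ nonsingular, and I would note that the hypothesis $U_tf=f\circ T_t$ on $L_2$ forces a bound $\|U_tf\|_\infty\le c_t\|f\|_\infty$ for $f\in L_2\cap L_\infty$ — indeed, if $U_t$ moves a bounded function off $L_\infty$ one can test against suitable $L_1\cap L_2$ functions as in Lemma~\ref{lkoop203}). With such a bound in hand for every $t$, Lemma~\ref{lkoop204} applied to the restriction to $(0,1]$ gives a group $\widetilde U$ on $L_\infty$ consistent with $U$ and a group $\widehat U$ on $L_1$ consistent with $U^*$, with $\widetilde U_t=(\widehat U_t)^*$. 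Composition with a pointwise map is a $*$-homomorphism of $L_\infty$, hence contractive, so $\|\widetilde U_t\|_{\infty\to\infty}\le 1$ for all $t\ge 0$; applying the same to $-t$ gives contractivity for all $t\in\Ri$. For w$^*$-continuity: for $f\in L_2\cap L_\infty$ and $g\in L_1\cap L_2$ one has $\langle \widetilde U_t f,g\rangle=(U_tf,g)\to(f,g)=\langle f,g\rangle$ as $t\to 0$ by strong continuity of $U$ on $L_2$; density of $L_2\cap L_\infty$ in $L_\infty$ (w$^*$) and of $L_1\cap L_2$ in $L_1$, together with the uniform contractivity just proved, upgrade this to w$^*$-continuity on all of $L_\infty$.

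Next, the algebra property of $D(A)\cap L_\infty$. Let $f,g\in D(A)\cap L_\infty$. Then $U_tf=f\circ T_t$ and $U_tg=g\circ T_t$, so $U_t(fg)=(fg)\circ T_t=(f\circ T_t)(g\circ T_t)=(U_tf)(U_tg)$, using that $T_t$ is a pointwise map so composition is multiplicative. Thus
\[
\frac{U_t(fg)-fg}{t}=\Big(\frac{U_tf-f}{t}\Big)(U_tg)+f\,\Big(\frac{U_tg-g}{t}\Big).
\]
On the right, $U_tg\to g$ in $L_2$ and $\{U_tg:t\in[0,1]\}$ is bounded in $L_\infty$ (by the contractivity above), while $(U_tf-f)/t\to Af$ in $L_2$; since $f\in L_\infty$, both products converge in $L_2$ to $(Af)g$ and $f(Ag)$ respectively. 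Hence the left-hand side converges in $L_2$, which shows $fg\in D(A)$ and $A(fg)=(Af)g+f(Ag)$. This simultaneously proves that $D(A)\cap L_\infty$ is closed under multiplication and that $A$ is a derivation on it.

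The main obstacle is the very first point: in the generality stated (no measure-preservation assumed, $(X,\cb,\mu)$ an arbitrary measure space), one must justify that $f\circ T_t\in L_\infty$ with a norm bound whenever $f\in L_2\cap L_\infty$, and more delicately that $\widehat U$ is genuinely a $C_0$-group on $L_1$ rather than merely a w$^*$-continuous one on $L_\infty$ — this is exactly the kind of issue Theorem~\ref{tkoop206} was built to handle, and if $\mu$ is only $\sigma$-finite rather than finite one cannot invoke it directly. However, for the conclusions actually asserted in Proposition~\ref{pkoop201} only w$^*$-continuity on $L_\infty$ and contractivity are claimed, so the boundedness of the individual operators $\widetilde U_t$ (obtained operatorwise, not uniformly in $t$) suffices, and the uniform contraction bound comes for free from the $*$-homomorphism property rather than from any boundedness principle. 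Once the extension exists, the derivation computation is routine as sketched above.
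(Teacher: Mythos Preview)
Your derivation computation and the contractivity bound are both correct and match the paper's argument essentially verbatim. The one place where your proposal has a real gap is the w$^*$-continuity of $\widetilde U$ on $L_\infty$.

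You argue: for $f\in L_2\cap L_\infty$ and $g\in L_1\cap L_2$ the pairing $\langle\widetilde U_tf,g\rangle=(U_tf,g)\to(f,g)$ by strong continuity on $L_2$; then ``density of $L_2\cap L_\infty$ in $L_\infty$ (w$^*$) \ldots\ together with the uniform contractivity'' upgrades this. The extension in $g$ is fine (norm density plus a uniform bound). The extension in $f$ is not: w$^*$-approximation of $f$ by $f'\in L_2\cap L_\infty$ gives control of $|\langle f-f',h\rangle|$ only for $h$ in a fixed finite (or weakly compact) subset of $L_1$, whereas you need it uniformly over $\{\widehat U_tg:|t|\le 1\}$, a set you have no compactness for. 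A concrete obstruction: on $\Ri$ with Lebesgue measure, $h_n=\one_{[n,n+1]}$ satisfies $\langle f',h_n\rangle\to 0$ for every $f'\in L_2\cap L_\infty$ while $\langle\one,h_n\rangle=1$; so convergence against $L_2\cap L_\infty$ plus uniform $L_1$-boundedness does \emph{not} imply weak $L_1$-convergence. Your last paragraph suggests w$^*$-continuity of $\widetilde U$ is a softer target than strong continuity of $\widehat U$ on $L_1$, but in fact they are equivalent: $\widetilde U_t=(\widehat U_t)^*$, so w$^*$-continuity of $\widetilde U$ is exactly weak continuity of $\widehat U$ on $L_1$, and a weakly continuous one-parameter group on a Banach space is automatically a $C_0$-group.

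The paper closes this gap by invoking \cite{Voi} Proposition~4: once one has the consistent contraction group $\widehat U$ on $L_1$ (from Lemma~\ref{lkoop204}, using $\|f\circ T_t\|_\infty\le\|f\|_\infty$), Voigt's result transfers the $C_0$-property from $U^*$ on $L_2$ to $\widehat U$ on $L_1$; then $\widetilde U=(\widehat U)^*$ is w$^*$-continuous for free. Your opening discussion about a $t$-dependent constant $c_t$ is also unnecessary: well-definedness of $f\mapsto f\circ T_t$ on $L_2$ already forces $T_t$ to be nonsingular, whence $\|f\circ T_t\|_\infty\le\|f\|_\infty$ with constant~$1$ immediately.
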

\begin{proof}
Let $t \in \Ri$.
If $f \in L_2 \cap L_\infty$ then
$\|U_t f\|_\infty = \|f \circ T_t\|_\infty \leq \|f\|_\infty$.
Hence by Lemma~\ref{lkoop204} there exist
a unique group $\widetilde U$ on $L_\infty$
and a unique group $\widehat U$ on $L_1$
such that $\widetilde U$ is consistent with $U$
and $\widehat U$ is consistent with $U^*$.
Moreover,
$\|\widetilde U_t\|_{\infty \to \infty} = \|\widehat U_t\|_{1 \to 1} = 1$
for all $t \in \Ri$.
Then $\widehat U$ is a $C_0$-group on $L_1$ by \cite{Voi} Proposition~4.
Therefore $\widetilde U$ is a w$^*$-continuous group on $L_\infty$.

Let $f,g \in D(A) \cap L_\infty$.
Then $U_t(f \, g) = (U_t f)(U_t g)$ for all $t \in \Ri$.
Hence
\[
\tfrac{1}{t} \Big( U_t(f \, g) - f \, g \Big)
= \tfrac{1}{t} \Big( U_t f - f \Big) U_t g
   + \tfrac{1}{t} \, f \, \Big( U_t g - g \Big)
\]
for all $t > 0$.
So $f \, g \in D(A)$ and $A(f \, g) = (A f) \, g + f \, (Ag)$.
\end{proof}

Under more conditions there is a converse of Proposition~\ref{pkoop201}.
A first step is the next proposition.

\begin{prop} \label{pkoop202}
Let $U$ be a $C_0$-group on $L_2(X)$
with generator $A$, where $(X,\cb,\mu)$ is a measure space.
Suppose that $D(A) \cap L_\infty(X)$ is an algebra
and $A$ is a derivation on $D(A) \cap L_\infty(X)$.
Moreover, suppose that there exists a $c \geq 1$ such that
\[
\|U_t f\|_\infty
\leq c \, \|f\|_\infty
\]
for all $t \in [-1,1]$ and $f \in L_2(X) \cap L_\infty(X)$.
Then there exists a unique one-parameter group $\widetilde U$ on $L_\infty(X)$
which is consistent with $U$.
Moreover,
\[
\widetilde U_t (f \, g) = (\widetilde U_t f) (\widetilde U_t g)
\]
for all $f,g \in L_\infty$ and $t \in \Ri$.
\end{prop}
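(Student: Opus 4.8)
The plan is to first use Lemma~\ref{lkoop204} to obtain the consistent one-parameter group $\widetilde U$ on $L_\infty$ (with $\|\widetilde U_t\|_{\infty\to\infty}\le c\,e^{|t|\log c}$), which handles the existence and uniqueness part; the real content is the multiplicativity identity $\widetilde U_t(f\,g)=(\widetilde U_t f)(\widetilde U_t g)$. The natural idea is to fix $f,g\in L_2\cap L_\infty$ and study the $L_2$-valued function $t\mapsto \xi(t):=U_t\big((U_{-t}f)(U_{-t}g)\big)$; if we can show $\xi$ is differentiable with $\xi'\equiv 0$, then $\xi(t)=\xi(0)=f\,g$, which is exactly $U_t(f\,g)=(U_t f)(U_t g)$ for $f,g\in L_2\cap L_\infty$, and then density plus boundedness of $\widetilde U_t$ extends it to all of $L_\infty$. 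However, to differentiate $\xi$ we need $U_{-t}f,U_{-t}g\in D(A)\cap L_\infty$, which is not given for arbitrary $f,g$.

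So first I would reduce to nice vectors: by Lemma~\ref{lkoop205} and Remark~\ref{rkoop206}, $D(A)\cap L_\infty$ is a core for $A$ and is $U$-invariant (hence $\widetilde U$-invariant). Since $\widetilde U_t$ is $L_\infty$-bounded on the unit ball uniformly for $t$ in compact sets, and since products are jointly continuous in the relevant topologies on bounded sets, it suffices to prove the identity for $f,g$ ranging over the core $\cd:=D(A)\cap L_\infty$. Actually more care is needed: $U_{-t}f$ stays in $\cd$ because $\cd$ is $U$-invariant, so for $f,g\in\cd$ the vector-valued map $s\mapsto U_s f$ is $C^1$ into $L_2$ with values in $\cd$, and likewise for $g$.

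Now fix $f,g\in\cd$ and set $h(t):=(U_{-t}f)(U_{-t}g)\in\cd\subset D(A)$, and $\xi(t):=U_t h(t)$. The plan is to show $t\mapsto h(t)$ is differentiable in $L_2$ with $h'(t)=-(A U_{-t}f)(U_{-t}g)-(U_{-t}f)(A U_{-t}g)$; this uses the product rule for the pointwise product of two $L_2$-differentiable, $L_\infty$-bounded curves (a Leibniz estimate of the form $\|ab-a'b'\|_2\le\|a-a'\|_2\|b\|_\infty+\|a'\|_2\|b-b'\|_2$, which I would state and dispatch quickly). Then since $h(t)\in D(A)$ for every $t$ and $A$ is a \emph{derivation} on $\cd$, we have $A h(t)=(A U_{-t}f)(U_{-t}g)+(U_{-t}f)(A U_{-t}g)=-h'(t)$. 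The standard computation for $\xi(t)=U_t h(t)$ with $h(t)\in D(A)$ and $h$ differentiable gives $\xi'(t)=U_t(A h(t))+U_t h'(t)=U_t\big(A h(t)+h'(t)\big)=0$. Hence $\xi$ is constant, $\xi(t)=\xi(0)=f\,g$, i.e.\ $U_t\big((U_{-t}f)(U_{-t}g)\big)=f\,g$; applying $U_{-t}$ and relabelling yields $U_t(f\,g)=(U_tf)(U_tg)$ for $f,g\in\cd$.

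The main obstacle is the last extension step: from $f,g\in\cd$ to all $f,g\in L_\infty$. I would argue it in two stages. First, for fixed $f\in\cd$, both sides are $L_\infty$-bounded (uniformly on bounded sets) and $w^*$-continuous in $g$ — actually it is cleaner to extend in $L_2$-norm on bounded $L_\infty$-balls: if $g_n\to g$ in $L_2$ with $\|g_n\|_\infty,\|g\|_\infty\le R$, then $(\widetilde U_t f)(\widetilde U_t g_n)\to(\widetilde U_t f)(\widetilde U_t g)$ in $L_2$ (Leibniz estimate again, using $\widetilde U_t f\in L_\infty$), and likewise $f\,g_n\to f\,g$, $\widetilde U_t(f\,g_n)\to\widetilde U_t(f\,g)$; since $\cd$ is $L_2$-dense in the $L_\infty$-ball of radius $R$ (truncate an $L_2$-approximating sequence, or use that $\cd$ is a core and dense in $L_2$ together with a standard truncation argument), the identity passes to all $g\in L_\infty$. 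Then repeat the same argument in the first variable. Care must be taken that the truncation lands back in $\cd$; if truncation does not preserve $D(A)$, one instead approximates $g\in L_\infty$ in $L_2$ by elements of $\cd$ of bounded $L_\infty$-norm — which is possible because the $f_n=\int\varphi_n(t)S_t g\,dt$ constructed in Lemma~\ref{lkoop205} satisfy $\|f_n\|_\infty\le M_n\|g\|_\infty$, but $M_n\to\infty$, so a little extra care (or a Banach–Saks/weak-compactness argument in $L_\infty=L_1^*$) is the delicate point. I expect this approximation bookkeeping to be the one genuinely fiddly part; everything else is routine semigroup calculus.
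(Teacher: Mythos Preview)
Your core argument is essentially the paper's. The paper fixes $f,g\in D(A)\cap L_\infty$ and sets $\alpha(t)=(U_tf)(U_tg)$, shows it is (weakly) differentiable with $\alpha'(t)=A\alpha(t)$ via the derivation identity, and invokes uniqueness of the Cauchy problem to conclude $\alpha(t)=U_t(fg)$. Your $\xi(t)=U_t\big((U_{-t}f)(U_{-t}g)\big)$ with $\xi'\equiv 0$ is the same computation in different clothes. Two remarks on the details. First, your Leibniz estimate should read $\|ab-a'b'\|_2\le \|a-a'\|_2\|b\|_\infty+\|a'\|_\infty\|b-b'\|_2$; to get strong $L_2$-differentiability of $t\mapsto (U_{-t}f)(U_{-t}g)$ you still need the cross term $(AU_{-t}f)(U_{-t-s}g-U_{-t}g)\to 0$ in $L_2$, which does hold by dominated convergence (the second factor is uniformly bounded in $L_\infty$ and tends to $0$ in measure), but is not quite ``routine''. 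The paper sidesteps this by proving only \emph{weak} differentiability of $\alpha$, pairing against a fixed $h\in L_2$ and using that $h\cdot\overline{AU_tf}\in L_1$ together with $w^*$-continuity of $\widetilde U_tg$.

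Second, your worry about $M_n\to\infty$ in Lemma~\ref{lkoop205} is misplaced: if one takes $\varphi\ge 0$ with $\int\varphi=1$ and $\mathrm{supp}\,\varphi\subset(0,b)$, then $\mathrm{supp}\,\varphi_n\subset(0,b/n)$ and $M_n\le c\,e^{(b/n)\log c}\to c$, so the mollified approximants $f_n\in D(A)\cap L_\infty$ satisfy $\|f_n\|_\infty\le C\|g\|_\infty$ uniformly in $n$. Thus your $L_2$-approximation with uniform $L_\infty$ bound goes through without Banach--Saks tricks, and you obtain the identity for all $f,g\in L_2\cap L_\infty$.

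The one genuine gap is the last step: passing from $f,g\in L_2\cap L_\infty$ to all of $L_\infty$. Your $L_2$-limit argument presupposes $g\in L_2$, so for a general (possibly infinite) measure it stops at $L_2\cap L_\infty$. The paper handles this by duality: from $\widetilde U_t=(\widehat U_t)^*$ with $\widehat U_t\in\cl(L_1)$, one rewrites $\langle\widetilde U_t(fg),h\rangle=\langle g,\overline f\,\widehat U_t h\rangle$ and $\langle(\widetilde U_tf)(\widetilde U_tg),h\rangle=\langle g,\widehat U_t(\overline{\widetilde U_tf}\,h)\rangle$ for $h\in L_1$; the identity for $g\in L_2\cap L_\infty$ then says two elements of $L_1$ agree when tested against $L_2\cap L_\infty$, hence against all of $L_\infty$ by $w^*$-density, and one reads the duality back to extend first in $g$, then in $f$. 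You should insert this duality step (or restrict the statement to finite $\mu$) to close the argument.
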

\begin{proof}
The existence and uniqueness of the one-parameter group $\widetilde U$ on $L_\infty$
follows from Lemma~\ref{lkoop204}.
Then
$\|\widetilde U_t f\|_\infty \leq c \, e^{\omega |t|} \, \|f\|_\infty$
for all $t \in \Ri$ and $f \in L_\infty$, where $\omega = \log c$.

Clearly $t \mapsto \langle \widetilde U_t f,g \rangle = (U_t f, g)$
is continuous for all $f \in L_\infty \cap L_2$ and $g \in L_1 \cap L_2$.
Let $f \in L_\infty \cap L_2$ and $g \in L_1$.
Let $t \in \Ri$ and $\varepsilon > 0$.
There exists a $g' \in L_1 \cap L_2$ such that $\|g-g'\|_1 \leq \varepsilon$.
Then
\begin{eqnarray*}
|\langle \widetilde U_{t+k} f, g \rangle - \langle \widetilde U_t f, g \rangle|
& = & | \langle \widetilde U_{t+k} f, g-g' \rangle
   + \langle \widetilde U_{t+k} f - \widetilde U_t f, g' \rangle
   + \langle \widetilde U_t f, g'-g \rangle |  \\
& \leq & c \, e^{\omega(|t| + 1)} \, \varepsilon \, \|f\|_\infty
   + |\langle \widetilde U_{t+k} f - \widetilde U_t f, g' \rangle|
   + c \, e^{\omega |t|} \, \varepsilon \, \|f\|_\infty
\end{eqnarray*}
for all $k \in [-1,1]$.
Hence
\begin{equation}
\lim_{k \to 0} \langle \widetilde U_{t+k} f, g \rangle
= \langle \widetilde U_t f, g \rangle
\label{epkoop202;6}
\end{equation}
for all $f \in L_\infty \cap L_2$, $g \in L_1$ and $t \in \Ri$.

Let $f,g \in D(A) \cap L_\infty$.
Define $\alpha \colon \Ri \to L_2$ by
$\alpha(t) = (U_t f) (U_t g)$.
Let $h \in L_2$.
We shall show that $t \mapsto (\alpha(t),h)$ is differentiable and that
\begin{equation}
\frac{d}{dt} (\alpha(t), h)
= \Big( (A U_t f) (U_t g) + (U_t f) (A U_t g), h\Big)
\label{epkoop202;7}
\end{equation}
for all $t \in \Ri$.
Let $t \in \Ri$.
If $k \in \Ri \setminus \{ 0 \} $, then
\begin{equation}
\tfrac{1}{k} \Big( (\alpha(t+k), h) - (\alpha(t), h) \Big)
= \tfrac{1}{k} \Big( (U_{t+k} f - U_t f) U_{t+k} g, h \Big)
   + \tfrac{1}{k} \Big( (U_t f) ( U_{t+k} g - U_t g ), h \Big)
.
\label{epkoop202;5}
\end{equation}
For the first term we shall prove that
\[
\lim_{k \to 0} \tfrac{1}{k} \Big( (U_{t+k} f - U_t f) U_{t+k} g, h \Big)
= \Big( (A U_t f) (U_t g) , h\Big)
.  \]
Let $k \in \Ri \setminus \{ 0 \} $.
Then
\begin{eqnarray*}
\lefteqn{
\Big| \tfrac{1}{k} \Big( (U_{t+k} f - U_t f) U_{t+k} g, h \Big)
   - \Big( (A U_t f) (U_t g) , h\Big) \Big|
} \hspace{20mm} \\*
& = & \Bigg| \Bigg( \Big( \tfrac{1}{k} (U_{t+k} f - U_t f) - A U_t f \Big) \widetilde U_{t+k} g, h \Bigg)
   + \Big( (A U_t f) (\widetilde U_{t+k} g - \widetilde U_t g), h \Big) \Bigg|  \\
& \leq & \|\tfrac{1}{k} (U_{t+k} f - U_t f) - A U_t f\|_2 \,
         \|\widetilde U_{t+k} g\|_\infty \, \|h\|_2
   + \Big| \langle \widetilde U_{t+k} g - \widetilde U_t g, h \, \overline{(A U_t f)} \rangle \Big|
.
\end{eqnarray*}
Since $\lim_{k \to 0} \|\tfrac{1}{k} (U_{t+k} f - U_t f) - A U_t f\|_2 = 0$,
$\sup_{k \in [-1,1]} \|\widetilde U_{t+k} g\|_\infty < \infty$ and
$h \, \overline{(A U_t f)} \in L_1$, it follows from (\ref{epkoop202;6})
that
\[
\lim_{k \to 0} \tfrac{1}{k} \Big( (U_{t+k} f - U_t f) U_{t+k} g, h \Big)
= \Big( (A U_t f) (U_t g) , h\Big)
.  \]
Similarly one proves for the second term in (\ref{epkoop202;5}) that
\[
\lim_{k \to 0} \tfrac{1}{k} \Big( (U_t f) ( U_{t+k} g - U_t g ), h \Big)
= \Big( (U_t f) (A U_t g), h\Big)
.  \]
Hence $t \mapsto (\alpha(t),h)$ is differentiable and
(\ref{epkoop202;7}) is valid.
Thus $\alpha$ is weakly differentiable, with weak derivative
\[
\alpha'(t)
= (A U_t f) (U_t g) + (U_t f) (A U_t g)
.  \]
But $A$ is a derivation on $D(A) \cap L_\infty$. 
Therefore 
\[
\alpha'(t)
= (A U_t f) (U_t g) + (U_t f) (A U_t g)
= A \Big( (U_t f) (U_t g) \Big)
= A (\alpha(t))
\]
for all $t \in \Ri$.
Obviously $\alpha(0) = f \, g$.
The uniqueness of the Cauchy problem yields
$\alpha(t) = e^{t A} (f \, g) = U_t (f \, g)$
for all $t \in \Ri$.

Fix $t \in \Ri$.
Let $g \in D(A) \cap L_\infty$.
Then
\[
U_t (f \, g)
= (U_t f) (U_t g)
= (U_t f) (\widetilde U_t g)
\]
for all $f \in D(A) \cap L_\infty$.
By Lemma~\ref{lkoop205} the space $D(A) \cap L_\infty$ is dense in $L_2$.
Hence it follows by
continuity that $U_t (f \, g) = (U_t f) (\widetilde U_t g)$ is valid for all $f \in L_2$ and
in particular for all $f \in L_2 \cap L_\infty$.
So
\begin{equation}
U_t (f \, g)
= (\widetilde U_t f) (U_t g)
\label{epkoop202;10}
\end{equation}
for all $f \in L_2 \cap L_\infty$ and $g \in D(A) \cap L_\infty$.
Since $D(A) \cap L_\infty$ is dense in $L_2$, it follows
that (\ref{epkoop202;10}) is valid for all
$f,g \in L_2 \cap L_\infty$.
So
\begin{equation}
\widetilde U_t (f \, g)
= (\widetilde U_t f) (\widetilde U_t g)
\label{epkoop202;11}
\end{equation}
for all $f,g \in L_2 \cap L_\infty$.
Let $f \in L_2 \cap L_\infty$ and $h \in L_1$.
Then
\begin{equation}
\langle \widetilde U_t (f \, g), h\rangle
= \langle f \, g , \widehat U_t h\rangle
= \langle g, \overline f \, \widehat U_t h \rangle
\label{epkoop202;2}
\end{equation}
for all $g \in L_\infty$.
Moreover,
\begin{equation}
\langle (\widetilde U_t f) (\widetilde U_t g), h\rangle
= \langle \widetilde U_t g, \overline{\widetilde U_t f} \, h \rangle
= \langle g, \widehat U_t( \overline{\widetilde U_t f} \, h) \rangle
\label{epkoop202;3}
\end{equation}
for all $g \in L_\infty$.
It follows from (\ref{epkoop202;11}), (\ref{epkoop202;2}) and
(\ref{epkoop202;3}) that
\begin{equation}
\langle g, \overline f \, \widehat U_t h \rangle
= \langle g, \widehat U_t( \overline{\widetilde U_t f} \, h) \rangle
\label{epkoop202;4}
\end{equation}
for all $g \in L_2 \cap L_\infty$.
But $L_2 \cap L_\infty$ is w$^*$-dense in $L_\infty$.
So (\ref{epkoop202;4}) is valid for all $g \in L_\infty$.
Using again (\ref{epkoop202;2}) and (\ref{epkoop202;3})
one deduces that
\[
\langle \widetilde U_t (f \, g), h\rangle = \langle (\widetilde U_t f) (\widetilde U_t g), h\rangle
\]
for all $g \in L_\infty$.
This is for all $h \in L_1$.
So (\ref{epkoop202;11}) is valid
for all $f \in L_2 \cap L_\infty$ and $g \in L_\infty$.
Finally, by a similar argument one establishes that (\ref{epkoop202;11}) is valid
for all $f,g \in L_\infty$.
\end{proof}

\begin{thm} \label{tkoop207}
Let $(X,\cb,\mu)$ be a $\sigma$-finite measure space such that
$(X,\cb)$ is a standard Borel space.
Let $U$ be a $C_0$-group on $L_2(X)$
with generator $A$.
Then the following are equivalent.
\begin{tabeleq}
\item \label{tkoop207-1}
For all $t \in \Ri$ there exists a measurable
map $T_t \colon X \to X$ such that $U_t f = f \circ T_t$
for all $f \in L_2(X)$.
\item \label{tkoop207-2}
The space $D(A) \cap L_\infty(X)$ is an algebra
and $A$ is a derivation on $D(A) \cap L_\infty(X)$.
Moreover, there exists a $c > 0$ such that
\[
\|U_t f\|_\infty
\leq c \, \|f\|_\infty
\]
for all $t \in [-1,1]$ and $f \in L_2(X) \cap L_\infty(X)$.
\end{tabeleq}
\end{thm}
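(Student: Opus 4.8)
The plan is to establish the two implications separately; \ref{tkoop207-1}$\Rightarrow$\ref{tkoop207-2} is essentially free, and all the content sits in the converse. For \ref{tkoop207-1}$\Rightarrow$\ref{tkoop207-2} I would simply quote Proposition~\ref{pkoop201}, which requires neither $\sigma$-finiteness nor a standard Borel structure: it already yields that $D(A)\cap L_\infty$ is an algebra on which $A$ is a derivation, and that $U$ extends consistently to a w$^*$-continuous contraction group on $L_\infty$, so $\|U_t f\|_\infty\le\|f\|_\infty$ for all $t\in\Ri$ and $f\in L_2\cap L_\infty$, which is the bound in \ref{tkoop207-2} with $c=1$.

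For \ref{tkoop207-2}$\Rightarrow$\ref{tkoop207-1} I would start from Proposition~\ref{pkoop202}: under the hypotheses in \ref{tkoop207-2} there is a unique one-parameter group $\widetilde U$ on $L_\infty$ consistent with $U$ and satisfying $\widetilde U_t(fg)=(\widetilde U_t f)(\widetilde U_t g)$ for all $f,g\in L_\infty$ and $t\in\Ri$; moreover, by Lemma~\ref{lkoop204} one has $\widetilde U_t=(\widehat U_t)^*$ for a bounded operator $\widehat U_t$ on $L_1$, so each $\widetilde U_t$ is w$^*$-continuous on $L_\infty=(L_1)^*$. Since $\widetilde U$ is a group, each $\widetilde U_t$ is a linear bijection of $L_\infty$, and it is unital: $p:=\widetilde U_t\one$ is idempotent, $\widetilde U_t g=(\widetilde U_t g)\,p$ for every $g\in L_\infty$, and surjectivity of $\widetilde U_t$ then forces $p=\one$.

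Next I would pass to the measure algebra. The idempotents of $L_\infty(X)$ are precisely the indicators $\one_B$ with $B\in\cb$, and by the above $\widetilde U_t$ maps them bijectively onto idempotents while preserving meets (multiplicativity) and complements (unitality and linearity); being w$^*$-continuous it is order continuous, so it also preserves countable joins. Hence $[B]\mapsto[\{\widetilde U_t\one_B=1\}]$ is a $\sigma$-complete Boolean automorphism of the measure algebra of $(X,\cb,\mu)$. Because $(X,\cb)$ is standard Borel and $\mu$ is $\sigma$-finite, the point realisation of such automorphisms (the non-singular counterpart of the multiplication theorem of \cite{Hal2} combined with \cite{Kec} Theorem~15.9, as recalled in the introduction) produces an a.e.\ invertible bimeasurable non-singular map $T_t\colon X\to X$ with $\{\widetilde U_t\one_B=1\}=T_t^{-1}B$ mod $\mu$; after modifying $T_t$ on a null set it may be taken everywhere defined. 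Then $f\mapsto f\circ T_t$ and $\widetilde U_t$ are two bounded linear multiplicative maps on $L_\infty$ that agree on all indicators, hence on simple functions, hence on all of $L_\infty$ by norm density of the simple functions; so $\widetilde U_t f=f\circ T_t$ for all $f\in L_\infty$. I expect this point-realisation step --- in the merely measure-class-preserving, $\sigma$-finite setting, together with the bookkeeping between ``mod null'' Boolean isomorphisms and honest point maps $X\to X$ --- to be the main obstacle; everything else is soft and rests on the lemmas and propositions already proved.

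It remains to transfer from $L_\infty$ to $L_2$. By consistency $U_t f=f\circ T_t$ for $f\in L_2\cap L_\infty$. For a general $f\in L_2$ I would reduce, by splitting into real and imaginary, positive and negative parts, to $f\ge 0$, set $f_n=\min(f,n)\in L_2\cap L_\infty$, and note that $f_n\circ T_t=U_t f_n\to U_t f$ in $L_2$ while $f_n\circ T_t\uparrow f\circ T_t$ a.e.; Fatou's lemma then gives $f\circ T_t\in L_2$ together with $U_t f=f\circ T_t$, which is \ref{tkoop207-1}.
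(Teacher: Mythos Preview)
Your proposal is correct and follows essentially the same route as the paper: both directions invoke Propositions~\ref{pkoop201} and~\ref{pkoop202} respectively, pass from the multiplicativity of $\widetilde U_t$ to a $\sigma$-homomorphism of the measure algebra via idempotents, realise it as a point map through \cite{Kec} Theorem~15.9, and then extend from indicators to $L_2$. The only cosmetic differences are that the paper proves $\widetilde U_t\one=\one$ by first finding $B$ with $\widetilde U_{-t}\one=\one_B$ rather than via your surjectivity argument, and that the final passage to all of $L_2$ is phrased through continuity of $U_t$ and the image measure rather than your truncation-plus-Fatou step.
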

\begin{proof}
`\ref{tkoop207-1}$\Rightarrow$\ref{tkoop207-2}'.
This follows from Proposition~\ref{pkoop201}.

`\ref{tkoop207-2}$\Rightarrow$\ref{tkoop207-1}'.
By Proposition~\ref{pkoop202} there exists a unique
one-parameter group $\widetilde U$ on $L_\infty$
which is consistent with $U$.
Moreover,
\begin{equation}
\widetilde U_t (f \, g) = (\widetilde U_t f) (\widetilde U_t g)
\label{etkoop207;1}
\end{equation}
for all $f,g \in L_\infty$ and $t \in \Ri$.
Fix $t \in \Ri$.
Let $\ci = \{ B \in \cb : \mu(B) = 0 \} $.
Then $\ci$ is a $\sigma$-ideal in $\cb$.
Let $B \in \cb$.
Then
$\widetilde U_t \one_B
= \widetilde U_t (\one_B^2)
= (\widetilde U_t \one_B)^2$
by (\ref{etkoop207;1}).
Therefore there exists a $B' \in \cb$ such that
$\widetilde U_t \one_B = \one_{B'}$.
If also $B'' \in \cb$ is such that $\widetilde U_t \one_B = \one_{B''}$,
then $B' \Delta B'' \in \ci$, where $\Delta$ denotes the symmetric difference.
Define $\Phi(B) = B' \Delta \ci \in \cb/\ci$.
Then $\Phi$ is a map from $\cb$ into $\cb/\ci$.

Clearly $\Phi(\emptyset) = \emptyset \Delta \ci$.
Let $B_1,B_2 \in \cb$.
It follows from (\ref{etkoop207;1}) that $\Phi(B_1 \cap B_2) = \Phi(B_1) \wedge \Phi(B_2)$.
Moreover, if $B_1 \cap B_2 = \emptyset$ and $B_1',B_2' \in \cb$ are such that
$\widetilde U_t \one_{B_1} = \one_{B_1'}$ and
$\widetilde U_t \one_{B_2} = \one_{B_2'}$, then
$\one_{B_1' \cap B_2'} = U_t \one_{B_1 \cap B_2} = 0$, so
$\widetilde U_t \one_{B_1 \cup B_2} = \widetilde U_t(\one_{B_1} + \one_{B_2})
= \one_{B_1'} + \one_{B_2'} = \one_{B_1' \cup B_2'}$.
Note that $(\widetilde U_t)^{-1} = \widetilde U_{-t}$ has the same properties
as $\widetilde U_t$.
Hence there exists a $B \in \cb$ such that $\widetilde U_{-t} \one = \one_B$.
Then $\widetilde U_t \one_B = \one$.
Consequently
\[
\one
= \widetilde U_t \one_B
= \widetilde U_t (\one_B \, \one)
= (\widetilde U_t \one_B) (\widetilde U_t \one)
= \one \, \widetilde U_t \one
= \widetilde U_t \one
.  \]
So $\Phi$ is a homomorphism.
Since $\widetilde U_t$ is continuous, it follows that $\widetilde U_t$ is a
$\sigma$-homomorphism of Boolean $\sigma$-algebras.
By \cite{Kec} Theorem~15.9 
there exists a measurable map
$T_t \colon X \to X$ such that
$\Phi(B) = T_t^{-1}(B) \Delta \ci$ for all $B \in \cb$.
So $U_t \one_B = \widetilde U_t \one_B = \one_B \circ T_t$ for all $B \in \cb$
with $\mu(B) < \infty$.
Using the continuity of $U_t$ and the image measure under $T_t$,
one deduces that
$U_t f = f \circ T_t$, first for all $f \in L_1 \cap L_2$ and then
for all $f \in L_2$.
\end{proof}

\begin{cor} \label{ckoop207.5}
Let $(X,\cb,\mu)$ be a standard Borel probability space.
Let $U$ be a $C_0$-group on $L_2(X)$
with generator $A$.
Then the following are equivalent.
\begin{tabeleq}
\item \label{ckoop207.5-1}
For all $t \in \Ri$ there exists a measurable
map $T_t \colon X \to X$ such that $U_t f = f \circ T_t$
for all $f \in L_2(X)$.
\item \label{ckoop207.5-2}
The space $L_\infty(X)$ is invariant under $U$.
Moreover, the space $D(A) \cap L_\infty(X)$ is an algebra
and $A$ is a derivation on $D(A) \cap L_\infty(X)$.
\end{tabeleq}
\end{cor}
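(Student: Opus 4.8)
The plan is to deduce Corollary~\ref{ckoop207.5} from Theorem~\ref{tkoop207} by verifying, under the hypotheses of \ref{ckoop207.5-2}, the extra uniform bound required in Theorem~\ref{tkoop207}\ref{tkoop207-2}, namely the existence of a $c>0$ with $\|U_t f\|_\infty \leq c\,\|f\|_\infty$ for all $t\in[-1,1]$ and $f\in L_2\cap L_\infty$. Since $\mu$ is a probability measure, every $f\in L_2\cap L_\infty$ lies in $L_1$, so once this bound is in place Theorem~\ref{tkoop207} applies directly and yields measurable maps $T_t$ with $U_t f = f\circ T_t$; the maps are automatically measure preserving because $U_t$ is unitary, and that gives \ref{ckoop207.5-1}. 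The reverse implication \ref{ckoop207.5-1}$\Rightarrow$\ref{ckoop207.5-2} is immediate from Proposition~\ref{pkoop201}, which already records that $L_\infty$ is invariant and that $A$ is a derivation on $D(A)\cap L_\infty$.

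The heart of the matter is therefore producing the local uniform $L_\infty$-bound from the mere invariance of $L_\infty$ under $U$. First I would fix $t\in\Ri$: invariance of $L_\infty$ under $U_t$ means $\widetilde U_t \colon L_\infty \to L_\infty$ is well defined on the dense (w$^*$-dense) subspace $L_2\cap L_\infty$, and by the closed graph theorem — here using that $\mu$ is finite, so $L_\infty\subset L_2$ and $U_t$ on $L_\infty$ has a closed graph as a restriction of the bounded operator $U_t$ on $L_2$ — there is a finite constant $c_t$ with $\|U_t f\|_\infty \leq c_t\,\|f\|_\infty$ for all $f\in L_2\cap L_\infty$. A priori $c_t$ depends on $t$, exactly as in the proof of Theorem~\ref{tkoop206}. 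To upgrade this to a bound uniform over $t\in[-1,1]$ I would run the same uniform-boundedness argument used in Theorem~\ref{tkoop206}: by Lemma~\ref{lkoop204} (applied on each bounded interval via the pointwise bounds $c_t$) one gets a one-parameter group $\widetilde U$ on $L_\infty$ consistent with $U$, with $\widetilde U_t = (\widehat U_t)^*$ on $L_1$; for fixed $f\in L_\infty$ the function $t\mapsto \|\widetilde U_t f\|_\infty = \sup\{|(f,S_t g)| : g\in L_2,\ \|g\|_1\leq 1\}$ is lower semicontinuous hence measurable, so a subadditivity/Steinhaus-type argument identical to the one in the proof of Theorem~\ref{tkoop206} shows $\{\widetilde U_t : t\in[2,3]\}$ is bounded, whence by the group law $\{\widetilde U_t : t\in[-1,1]\}$ is bounded in $\cl(L_\infty)$, and dually in $\cl(L_1)$. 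Taking $c = \sup_{t\in[-1,1]}\|\widetilde U_t\|_{\infty\to\infty} < \infty$ gives precisely the needed estimate.

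In fact the cleanest route is to observe that the hypotheses of Theorem~\ref{tkoop206} are satisfied: applying Theorem~\ref{tkoop206} with $S = U^*$ (which also leaves $L_\infty$ invariant, since $U$ does and $U_t^* = U_{-t}$ for unitary $U$), its conclusion \eqref{etkoop206;10} furnishes $M\geq 1$ and $\omega\geq 0$ with $\|U_t f\|_\infty \leq M\,e^{\omega|t|}\,\|f\|_\infty$ for all $t\in\Ri$ and $f\in L_\infty$; restricting to $t\in[-1,1]$ and $c = M\,e^{\omega}$ gives the bound in Theorem~\ref{tkoop207}\ref{tkoop207-2}. Thus the whole corollary reduces to citing Proposition~\ref{pkoop201} in one direction and the combination of Theorem~\ref{tkoop206} and Theorem~\ref{tkoop207} in the other. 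The only point requiring a little care — the main obstacle — is justifying that $L_\infty$-invariance of the unitary group $U$ entails $L_\infty$-invariance of $U^*$; this is trivial here since $U_t^*=U_{-t}$, but it is the place where unitarity (as opposed to the general $C_0$-group setting of Theorem~\ref{tkoop207}) is genuinely used, and it is also why Theorem~\ref{tkoop311} must instead hypothesise the invariance of $L_\infty$ under $U$ in its non-self-adjoint analogue.
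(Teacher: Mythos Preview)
Your overall strategy---combine Theorem~\ref{tkoop206} with Theorem~\ref{tkoop207}---is exactly the paper's one-line proof, and the inline reproduction of the measurability/uniform-boundedness argument from Theorem~\ref{tkoop206} is correct, if redundant.

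However, you have misread the hypotheses: Corollary~\ref{ckoop207.5} is stated for an arbitrary $C_0$-group $U$ on $L_2(X)$, \emph{not} a unitary one. So your remarks that ``the maps are automatically measure preserving because $U_t$ is unitary'' and that ``$U_t^*=U_{-t}$ for unitary $U$'' are out of place, and your closing claim that unitarity is ``genuinely used'' is wrong. The corollary neither assumes unitarity nor concludes that $T_t$ is measure preserving; that refinement is Theorem~\ref{tkoop101}.

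The good news is that unitarity is not needed at the one spot where you invoke it. To apply Theorem~\ref{tkoop206} with $S=U^*$, Condition~\ref{tkoop206-2} asks that $L_\infty$ be invariant under $S^*=(U^*)^*=U$ (double adjoint on the Hilbert space $L_2$), and this is precisely the standing hypothesis of \ref{ckoop207.5-2}. There is nothing to check about $U^*$ itself leaving $L_\infty$ invariant. The conclusion~(\ref{etkoop206;10}) then reads $\|U_t f\|_\infty\le M\,e^{\omega|t|}\|f\|_\infty$, giving the bound required by Theorem~\ref{tkoop207}\ref{tkoop207-2}. With that correction your ``cleanest route'' paragraph is exactly the paper's proof.
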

\begin{proof}
This is a consequence of Theorems~\ref{tkoop206} and \ref{tkoop207}.
\end{proof}

Note that the map $U_t$ is unitary if and only if the map $T_t$ is measure
preserving in Theorem~\ref{tkoop207}\ref{tkoop207-1}.

\begin{proof}[{\bf Proof of Theorem~\ref{tkoop101}}]
This follows immediately from Corollary~\ref{ckoop207.5}.
\end{proof}

\begin{remark} \label{rkoop209}
Note that in Theorem~\ref{tkoop101} the map $T_t \colon X \to X$ is measure preserving
for all $t \in \Ri$.
Moreover, 
\[
T_{t_1 + T_2} = T_{t_1} \circ T_{t_2} \mbox{ a.e.}
\]
for all $t_1,t_2 \in \Ri$.
Since the one-parameter group $U$ is strongly continuous, it follows from 
\cite{GTW} page~307 that the group $(T_t)_{t \in \Ri}$ enjoys the following 
measurabilty property: there exists a Borel map $F \colon \Ri \times X \to X$
such that for all $t \in \Ri$ one has
\[
F(t,x) = T_t x
\mbox{ for a.e.\ } x \in X
.  \]
Thus $(T_t)_{t \in \Ri}$ is a measurable measure preserving flow.
\end{remark}

\section{Weighted non-singular $C_0$-groups} \label{Skoop3}

Throughout this section let $(X,\cb,\mu)$ be a 
standard Borel probability space.
Let $U$ be a one-parameter group on $L_2(X)$ with $U_0 = I$.
The group $U$ is called {\bf weighted non-singular} if for each $t\in\Ri$ there exist 
a map $T_t\colon X\to X$ and a function $\psi_t\colon X\to\Ci$ such that 
\begin{equation} \label{eSkoop3;1}
U_tf=\psi_t\cdot (f\circ T_t)
\end{equation}
for all $f\in L_2(X)$.
By substituting $f= \one$, we obtain that $\psi_t=U_t\one$ for all $t \in \Ri$, 
in particular, 
$\psi_t$ is measurable.
Moreover, $\psi_0=\one$ and the group property of $U$ implies the 
cocycle identity
\begin{equation} \label{eSkoop3;2}
\psi_{t+t'}=\psi_t\cdot (\psi_{t'}\circ T_t)
\end{equation}
and the group property
\begin{equation} \label{eSkoop3;2.5}
T_{t+t'} = T_t \circ T_{t'}
\mbox{ a.e.}
\end{equation}
for all $t,t'\in\Ri$.
Let $t \in \Ri$.
It follows that $\one=\psi_t\cdot (\psi_{-t} \circ T_t)$, 
whence $\psi_t\neq 0$ a.e.\ and
\begin{equation} \label{eSkoop3;3}
\frac{1}{\psi_t} = \psi_{-t} \circ T_t.
\end{equation}
Therefore 
\[
f\circ T_t=\frac1{\psi_t}\cdot U_t f
\]
for all $f\in L_2(X)$,
so $T_t$ is measurable.
In general, a measurable map $S \colon X \to X$ is called {\bf non-singular}
if $\mu(S^{-1}(A)) = 0$ for all $A \in \cb$ with $\mu(A) = 0$.
Then note that $T_t$ is a non-singular map of $(X,\cb,\mu)$ and that 
the measure $\mu$ and the image measure $T_{t*} \mu$ are equivalent, where
\[
(T_{t*} \mu)(A):=\mu(T_t^{-1} A)
\]
for all $A \in \cb$.
Indeed, if $B \in \cb$ and $\mu(B)=0$, then 
$0=U_t \one_B= \psi_t \cdot (\one_B \circ T_t) = \psi_t\cdot\one_{T^{-1}_t B}$,
hence $(T_{t*} \mu) (B) = \mu(T_t^{-1} B) = 0$.

A weighted non-singular one-parameter group $U$ is called a {\bf weighted Koopman group} if  
$T_t$ is measure-preserving for all $t \in \Ri$.

\begin{lemma} \label{lkoop302}
Let $U$ be a weighted non-singular one-parameter group
given by~{\rm (\ref{eSkoop3;1})}.
Then 
\[
\left\| |\psi_t|^2\cdot \left(\frac{d(T_{t*} \mu)}{d\mu}\circ T_t \right) \right\|_\infty
\leq \|U_t\|^2_{2\to2}
\]
for all $t \in \Ri$.
\end{lemma}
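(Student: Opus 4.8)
The plan is to compute $\|U_t f\|_2^2$ explicitly using the change-of-variables formula for the non-singular map $T_t$, and then recognise the claimed essential supremum as the supremum of a ratio of two integrals. Fix $t \in \Ri$ and write $T = T_t$, $\psi = \psi_t$, and let $\rho = \frac{d(T_{*}\mu)}{d\mu}$ denote the Radon--Nikodym derivative, which exists by the remarks preceding the lemma (the measure $\mu$ and the image measure $T_{*}\mu$ are equivalent). For $f \in L_2(X)$ we have $U_t f = \psi \cdot (f \circ T)$, so
\[
\|U_t f\|_2^2 = \int_X |\psi|^2 \, |f \circ T|^2 \, d\mu .
\]
The key identity is the transfer/substitution formula: for any non-negative measurable $h$ on $X$ one has $\int_X (h \circ T) \, d\mu = \int_X h \, d(T_{*}\mu) = \int_X h \, \rho \, d\mu$. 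I would like to apply this with $h = |f|^2 \cdot w$ for a suitable weight $w$ so that $(h \circ T)$ produces the integrand $|\psi|^2 |f\circ T|^2$; the natural choice is to write $|\psi|^2 = \big(\tfrac{1}{\rho} \circ T\big)^{-1}$ — more precisely, the point is to express $|\psi|^2$ in the form $g \circ T$ for a measurable $g$, which is exactly what the cocycle relation (\ref{eSkoop3;3}) together with $|\psi_{-t}|^2$ allows, or simply to keep $|\psi|^2 (\rho \circ T)$ together as one factor.

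Concretely, the plan is as follows. Set $F = |f|^2$, which ranges over all non-negative elements of $L_1(X)$ as $f$ ranges over $L_2(X)$. Applying the substitution formula to $h(x) = F(x)$ evaluated along $T$ is not quite what is needed; instead I apply it to $h = F \cdot k$ for the right $k$. The cleanest route: by definition of $\rho$, for every non-negative measurable $G$,
\[
\int_X G \, d\mu = \int_X (G \circ T) \, \rho \, d\mu \quad\text{is false in general},
\]
so I must be careful about the direction. The correct statement is $\int_X (G \circ T)\, d\mu = \int_X G \, \rho \, d\mu$. Hence, taking $G = F$ gives $\int_X (F \circ T)\, d\mu = \int_X F \rho \, d\mu$, but we need the weighted version. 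Replacing $G$ by $G / \rho$ (valid since $\rho > 0$ a.e., as the measures are equivalent) yields $\int_X \big((G/\rho) \circ T\big)\, d\mu = \int_X G \, d\mu$, i.e.
\[
\int_X G \, d\mu = \int_X \Big(\tfrac{G}{\rho}\Big)\!\circ T \; d\mu .
\]
Now choose $G = |\psi|^2 (\rho \circ T) \cdot (f \circ T)^{\,\flat}$ — this doesn't typecheck directly, so the honest approach is the reverse: start from $\|U_t f\|_2^2 = \int_X |\psi|^2 (|f|^2 \circ T) \, d\mu$, and set $G = |\psi|^2 (\rho\circ T)$ — wait, I should instead put $H \circ T := |f|^2 \circ T$ with $H = |f|^2$, and use the substitution formula in the form $\int_X (H \circ T) \cdot w \, d\mu$ for the particular weight $w = |\psi|^2$. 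Writing $w = (w_0 \circ T)$ is not possible for general $w$; so the genuinely correct manipulation is to apply $\int_X (\Phi\circ T)\,d\mu = \int_X \Phi\,\rho\,d\mu$ with $\Phi = H \cdot \big(|\psi|^2 \text{ pushed back}\big)$. This forces introducing $|\psi_{-t}|^2$ via (\ref{eSkoop3;3}): since $\tfrac{1}{\psi_t} = \psi_{-t}\circ T_t$, we get $|\psi_t|^{-2} = |\psi_{-t}|^2 \circ T$, so $|\psi_t|^2 = \big(|\psi_{-t}|^{-2}\big)\circ T$. Therefore
\[
\|U_t f\|_2^2 = \int_X \Big( |\psi_{-t}|^{-2} \, |f|^2 \Big)\circ T \; d\mu = \int_X |\psi_{-t}|^{-2} \, |f|^2 \, \rho \, d\mu = \int_X \Big( |\psi_{-t}|^{-2} \rho \Big) |f|^2 \, d\mu .
\]

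From this the lemma is immediate: the operator norm satisfies $\|U_t\|_{2\to 2}^2 = \sup_{\|f\|_2 = 1} \|U_t f\|_2^2 = \big\| |\psi_{-t}|^{-2} \rho \big\|_\infty$, since multiplication by the non-negative $L_\infty$ function $|\psi_{-t}|^{-2}\rho$ acting as $f \mapsto \sqrt{|\psi_{-t}|^{-2}\rho}\cdot f$ on $L_2$ has norm-squared equal to the essential supremum of that function (and a priori the quantity could be $+\infty$, but the inequality direction we want is automatic — finiteness then follows from $U_t$ being bounded). Finally I rewrite $|\psi_{-t}|^{-2} \rho$ in the form stated in the lemma: by (\ref{eSkoop3;3}) again, $|\psi_{-t}|^{-2} = |\psi_t|^2 \circ T_{-t}$, but more to the point, pushing the displayed function forward through $T$, namely evaluating $\big(|\psi_{-t}|^{-2}\rho\big)\circ T$, and using $|\psi_{-t}|^{-2}\circ T = |\psi_t|^2$ together with $\rho \circ T = \tfrac{d(T_{*}\mu)}{d\mu}\circ T_t$, gives $\big(|\psi_{-t}|^{-2}\rho\big)\circ T = |\psi_t|^2 \cdot \big(\tfrac{d(T_{*}\mu)}{d\mu}\circ T_t\big)$; since $T$ is non-singular with equivalent image measure, composition with $T$ preserves essential suprema up to the correct inequality, so $\big\| |\psi_t|^2 (\rho\circ T_t)\big\|_\infty \le \big\| |\psi_{-t}|^{-2}\rho\big\|_\infty = \|U_t\|_{2\to2}^2$.

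The main obstacle — and the step requiring the most care — is the bookkeeping of which Radon--Nikodym derivative appears and in which direction the change-of-variables formula is applied; in particular one must justify that $\rho = \tfrac{d(T_{*}\mu)}{d\mu} > 0$ a.e.\ (from equivalence of $\mu$ and $T_{*}\mu$, already noted in the text) so that division by $\rho$ is legitimate, and one must handle the possibility that the relevant essential supremum is a priori infinite, concluding finiteness only a posteriori from boundedness of $U_t$. The composition-with-$T_t$ step at the end uses non-singularity of $T_t$ to pass between $\|h\|_\infty$ and $\|h\circ T_t\|_\infty$, which is valid because the push-forward measure $T_{t*}\mu$ is equivalent to $\mu$.
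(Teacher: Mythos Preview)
Your argument is correct and follows essentially the same route as the paper: both compute $\|U_t f\|_2^2 = \int |\psi_t|^2\,|f\circ T_t|^2\,d\mu$, apply the change-of-variables formula for $T_t$, read off the bound as the $L_\infty$-norm of a non-negative weight, and then compose with $T_t$ using non-singularity. The only cosmetic difference is that you pull $|\psi_t|^2$ back through $T_t$ via the cocycle identity (\ref{eSkoop3;3}), obtaining $|\psi_{-t}|^{-2}$, whereas the paper writes the same function as $|\psi_t|^2\circ T_t^{-1}$; these coincide, so the two arguments are the same up to notation (though your exposition would benefit from deleting the false starts).
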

\begin{proof}
If $f \in L_2$, then 
\[
\int|\psi_t|^2 \cdot |f\circ T_t|^2\,d\mu
= \|U_t f\|_2^2
\leq c \, \|f\|_2^2,
\]
where $c=\|U_t\|_{2\to 2}^2$.
Hence
\[
\int|\psi_t|^2 \cdot (f\circ T_t) \, d\mu
\leq c \, \|f\|_1
\]
for all $0 \leq f\in L_1(X,\mu)$.
Equivalently,
\[
\int (|\psi_t|^2\circ T^{-1}_{t}) \cdot f \cdot \frac{d(T_{t*} \mu)}{d\mu} \, d\mu
= \int (|\psi_t|^2 \circ T^{-1}_{t}) \cdot f \, d(T_{t*} \mu)
\leq c \, \|f\|_1
\]
for all $0\leq f\in L_1(X,\mu)$.
Since $(|\psi_t|^2\circ T_{t}^{-1}) \cdot \frac{d(T_{t*} \mu)}{d\mu} \geq 0$,
one deduces that 
\[
\left\| (|\psi_t|^2\circ T_{t}^{-1}) \cdot \frac{d(T_{t*} \mu)}{d\mu} \right\|_\infty
\leq c
\] and the result follows by the non-singularity of $T_t$.
\end{proof}

\begin{lemma} \label{lkoop303}
Let $U$ be a weighted non-singular one-parameter group given by~{\rm (\ref{eSkoop3;1})}.
Then the following are equivalent.
\begin{tabeleq} 
\item \label{lkoop303-1}
The representation $U$ preserves $L_\infty(X)$.
\item \label{lkoop303-2}
$\psi_t=U_t\one \in L_\infty(X)$ for all $t\in \Ri$.
\item \label{lkoop303-3}
$\frac{d(T_{t*} \mu)}{d\mu}\in L_\infty(X)$ for all $t\in \Ri$.
\end{tabeleq}
\end{lemma}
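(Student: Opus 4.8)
The plan is to prove the cyclic chain of implications \ref{lkoop303-2}$\Rightarrow$\ref{lkoop303-1}$\Rightarrow$\ref{lkoop303-3}$\Rightarrow$\ref{lkoop303-2}, exploiting the explicit representation (\ref{eSkoop3;1}) and the cocycle identity (\ref{eSkoop3;2}) together with Lemma~\ref{lkoop302}.

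First I would show \ref{lkoop303-2}$\Rightarrow$\ref{lkoop303-1}. Fix $t \in \Ri$ and $f \in L_\infty(X)$. Since $T_t$ is non-singular, $\|f \circ T_t\|_\infty \leq \|f\|_\infty$ (a null set pulls back to a null set), and therefore $U_t f = \psi_t \cdot (f \circ T_t) \in L_\infty(X)$ with $\|U_t f\|_\infty \leq \|\psi_t\|_\infty \, \|f\|_\infty$. Hence $U$ preserves $L_\infty(X)$.

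Next, \ref{lkoop303-1}$\Rightarrow$\ref{lkoop303-3}. Assume $U_t(L_\infty) \subset L_\infty$ for all $t$. By the closed graph theorem (using that $\mu$ is finite so $L_\infty \subset L_2$) each $U_t$ restricts to a bounded operator on $L_\infty$; in particular $\psi_t = U_t \one \in L_\infty$, so $|\psi_t|^2 \in L_\infty$ as well. Now apply (\ref{eSkoop3;3}): since $\psi_t \neq 0$ a.e.\ and $\tfrac{1}{\psi_t} = \psi_{-t} \circ T_t$, and since by the same argument applied to $-t$ we have $\psi_{-t} \in L_\infty$, we get $\tfrac{1}{|\psi_t|^2} = |\psi_{-t} \circ T_t|^2 \in L_\infty$, i.e.\ $|\psi_t|^2$ is bounded below away from $0$. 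Feeding this into Lemma~\ref{lkoop302}, which gives $\big\| |\psi_t|^2 \cdot \big(\tfrac{d(T_{t*}\mu)}{d\mu} \circ T_t\big)\big\|_\infty \leq \|U_t\|_{2\to 2}^2 < \infty$, and dividing by the (bounded-below) factor $|\psi_t|^2$, one obtains $\tfrac{d(T_{t*}\mu)}{d\mu} \circ T_t \in L_\infty$. Finally, composing with $T_{-t}$ (again non-singular, and $T_{-t} = T_t^{-1}$ a.e.\ by (\ref{eSkoop3;2.5})) yields $\tfrac{d(T_{t*}\mu)}{d\mu} \in L_\infty(X)$, which is \ref{lkoop303-3}.

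Finally, \ref{lkoop303-3}$\Rightarrow$\ref{lkoop303-2}. Assume $\tfrac{d(T_{t*}\mu)}{d\mu} \in L_\infty$ for all $t$. Fix $t$. From the definition of the image measure and the substitution rule, $\int |\psi_t|^2 \, |f \circ T_t|^2 \, d\mu = \|U_t f\|_2^2$ for $f \in L_2$; rewriting $\int |\psi_t|^2 \, |f \circ T_t|^2 \, d\mu = \int (|\psi_t|^2 \circ T_t^{-1}) \, |f|^2 \, d(T_{t*}\mu) = \int (|\psi_t|^2 \circ T_t^{-1}) \, |f|^2 \, \tfrac{d(T_{t*}\mu)}{d\mu} \, d\mu$, and taking $f$ ranging over indicators of sets of small positive measure shows that $(|\psi_t|^2 \circ T_t^{-1}) \cdot \tfrac{d(T_{t*}\mu)}{d\mu} \leq \|U_t\|_{2\to2}^2$ a.e. Now $\tfrac{d(T_{t*}\mu)}{d\mu}$ is a.e.\ strictly positive (since $\mu$ and $T_{t*}\mu$ are equivalent, as noted before the lemma), and its reciprocal is $\tfrac{d\mu}{d(T_{t*}\mu)}$, which equals $\tfrac{d(T_{-t,*}\mu)}{d\mu}$-type quantity and is bounded by hypothesis applied to $-t$; more directly, $\tfrac{1}{d(T_{t*}\mu)/d\mu}$ is the density of $\mu$ with respect to $T_{t*}\mu$, and boundedness of the reciprocal follows from applying \ref{lkoop303-3} to $-t$ via (\ref{eSkoop3;2.5}). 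Hence $|\psi_t|^2 \circ T_t^{-1} \in L_\infty$, and composing with $T_t$ gives $|\psi_t|^2 \in L_\infty$, i.e.\ $\psi_t \in L_\infty$.

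The main obstacle is the careful bookkeeping in the equivalence between a density being bounded and its reciprocal being bounded, and making sure the composition-with-$T_t$ (respectively $T_{-t}$) arguments are legitimate; all of this rests on the non-singularity of $T_t$ and on the mutual equivalence of $\mu$ and $T_{t*}\mu$, both of which were established before the statement, so no genuinely new idea is needed — only Lemma~\ref{lkoop302} together with the cocycle relations (\ref{eSkoop3;2}), (\ref{eSkoop3;2.5}), (\ref{eSkoop3;3}).
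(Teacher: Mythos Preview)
Your proof is correct and follows essentially the same route as the paper: the equivalence \ref{lkoop303-1}$\Leftrightarrow$\ref{lkoop303-2} via (\ref{eSkoop3;1}) and non-singularity, then Lemma~\ref{lkoop302} together with (\ref{eSkoop3;3}) and the Radon--Nikodym chain rule coming from $T_t \circ T_{-t} = I$ a.e.\ to pass between \ref{lkoop303-2} and \ref{lkoop303-3}. The only cosmetic difference is that the paper cites Lemma~\ref{lkoop302} directly in both directions and uses the identity $\big(\tfrac{d(T_{t*}\mu)}{d\mu}\circ T_t\big)\cdot \tfrac{d(T_{-t*}\mu)}{d\mu}=\one$ to shortcut your reciprocal-density argument, and it does not need the closed graph theorem (since $U_t\one\in L_\infty$ is immediate from \ref{lkoop303-1}).
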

\begin{proof}
`\ref{lkoop303-1}$\Rightarrow$\ref{lkoop303-2}' is trivial and
`\ref{lkoop303-2}$\Rightarrow$\ref{lkoop303-1}' follows from (\ref{eSkoop3;1}) and
the fact that $T_t$ is non-singular for all $t \in \Ri$.

`\ref{lkoop303-2}$\Rightarrow$\ref{lkoop303-3}'.
Lemma~\ref{lkoop302} and (\ref{eSkoop3;3}) imply that 
\[
\left\| \frac{d(T_{t*} \mu)}{d\mu}\circ T_t \right\|_\infty
\leq \|U_t\|^2_{2\to2} \, \|\psi_{-t}\|_\infty^2
< \infty
\]
for all $t \in \Ri$.

`\ref{lkoop303-3}$\Rightarrow$\ref{lkoop303-2}'.
Since $T_t \circ T_{-t} = I$ a.e., it follows that 
\[
\Big( \frac{d(T_{t*}\mu)}{d\mu} \circ T_t \Big) 
    \cdot \frac{d(T_{-t*}\mu)}{d\mu}
= \one
\]
for all $t \in \Ri$.
Then the claim is a consequence of Lemma~\ref{lkoop302}.
\end{proof}

\begin{remark} \label{rkoop304}
Let $U$ be a $C_0$-group which is weighted Koopman and unitary.
Let $t \in \Ri$.
Then 
\[
\int |f|^2\circ T_t \, d\mu
= \int |f|^2\,d\mu
= \|U_t f\|^2_2
= \int|\psi_t|^2 \cdot (|f|^2 \circ T_t) \, d\mu
\]
for all $f \in L_2(X)$.
Hence $\int (|\psi_t|^2-1) \cdot (|f|^2\circ T_t) \, d\mu = 0$
for all $f \in L_2(X)$
and therefore
$|\psi_t|=1$ a.e.
\end{remark}

There are many one-parameter $C_0$-groups which preserve $L_\infty(X)$,
but which are not weighted non-singular.

\begin{exam} \label{xkoop304.5}
Let $B \in \cb$ be such that $\mu(B) \neq 0 \neq \mu(X \setminus B)$.
Define $A \colon L_2(X) \to L_2(X)$ by $Af = (f,\one_B) \, \one_{X \setminus B}$.
Then $A$ is bounded, so it generates a $C_0$-group $U$.
Since $A^2 = 0$, one deduces that $U_t = I + t \, A$ for all $t \in \Ri$.
Hence obviously $U$ leaves $L_\infty(X)$ invariant.
Now choose $t = - \mu(B)^{-1}$.
Then 
\[
U_t \one 
= \one + t \, (\one,\one_B) \, \one_{X \setminus B}
= \one + t \, \mu(B) \, \one_{X \setminus B}
= \one - \one_{X \setminus B}
= \one_B
.  \]
Since $\mu( \{ x \in X : (U_t \one)(x) = 0 \} ) = \mu(X \setminus B) > 0$, 
the group $U$ is not weighted non-singular by (\ref{eSkoop3;3}).
\end{exam}

We next consider weighted non-singular one-parameter groups which 
preserve $L_\infty(X)$.

\begin{lemma} \label{lkoop305}
Let $U$ be a weighted non-singular one-parameter group given by~{\rm (\ref{eSkoop3;1})}.
Assume that $U$ preserves $L_\infty(X)$.
Then $f \circ T_t \in L_2(X)$ for all $f\in L_2(X)$ and $t\in\Ri$.
Define $V_t \colon L_2(X) \to L_2(X)$ by 
\[
V_t f = f \circ T_t 
.  \]
Then one has the following.
\begin{tabel}
\item  \label{lkoop305-1}
$(V_t)_{t \in \Ri}$ is a one-parameter group on $L_2(X)$.
\item  \label{lkoop305-2}
If $U$ is a $C_0$-group, then also $(V_t)_{t \in \Ri}$ is a $C_0$-group.
\end{tabel}
\end{lemma}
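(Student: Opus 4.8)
The plan is to dispatch the well-definedness of $V_t$ on $L_2(X)$ and the group law by direct computation, and then to concentrate on strong continuity, where the real point is that $V$ is not known \emph{a priori} to be a $C_0$-group, so standard semigroup theory does not apply.

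First I would record that $V_t$ maps $L_2$ into $L_2$. Since $U$ preserves $L_\infty(X)$, Lemma~\ref{lkoop303} gives $\frac{d(T_{t*}\mu)}{d\mu}\in L_\infty(X)$ for every $t$; as $T_t$ is non-singular, $f\mapsto f\circ T_t$ is well defined on $L_2$-classes, and the image-measure formula together with $T_{t*}\mu\ll\mu$ yields
\[
\|f\circ T_t\|_2^2=\int_X|f|^2\circ T_t\,d\mu=\int_X|f|^2\,\frac{d(T_{t*}\mu)}{d\mu}\,d\mu\le\Big\|\frac{d(T_{t*}\mu)}{d\mu}\Big\|_\infty\,\|f\|_2^2 .
\]
For the group law: from $U_0=I$ one gets $\psi_0=U_0\one=\one$, hence $f=f\circ T_0$ for all $f$, i.e.\ $V_0=I$; and reading~(\ref{eSkoop3;2.5}) as $T_{t+s}=T_t\circ T_s$ a.e.\ gives $V_{t+s}f=(f\circ T_t)\circ T_s=V_sV_tf$ for all $f\in L_2$. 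Thus $(V_t)_{t\in\Ri}$ is a one-parameter group of bounded operators on $L_2$ with $V_t^{-1}=V_{-t}$, which is~\ref{lkoop305-1}.

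Now assume $U$ is a $C_0$-group, so $\|U_t\|_{2\to2}\le Me^{\omega|t|}$ for suitable $M\ge1$, $\omega\ge0$. The key preliminary step for~\ref{lkoop305-2} is that $\sup_{|t|\le1}\|V_t\|_{2\to2}<\infty$. By the cocycle identity~(\ref{eSkoop3;2}) and non-singularity one has $\|\psi_{t+s}\|_\infty\le\|\psi_t\|_\infty\,\|\psi_s\|_\infty$, so $t\mapsto\|\psi_t\|_\infty$ is submultiplicative; it is finite by Lemma~\ref{lkoop303}, equals $1$ at $0$, and is lower semicontinuous (hence Borel) because $\|\psi_t\|_\infty=\sup\{\,|(U_t\one,g)|:g\in L_2,\ \|g\|_1\le1\,\}$ is a supremum of functions continuous in $t$. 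A Steinhaus-type argument---pick a level set $\{\,|t|\le1:\|\psi_t\|_\infty\le n\,\}$ of Lebesgue measure larger than $1$, intersect it with its reflection in $0$, apply Steinhaus to the resulting positive-measure set to obtain a neighbourhood of $0$ on which $\|\psi_t\|_\infty\le n^2$, then propagate by submultiplicativity---shows $P:=\sup_{|t|\le1}\|\psi_t\|_\infty<\infty$. Combining Lemma~\ref{lkoop302} with the identity $(\frac{d(T_{t*}\mu)}{d\mu}\circ T_t)\cdot\frac{d(T_{-t*}\mu)}{d\mu}=\one$ from the proof of Lemma~\ref{lkoop303} and with~(\ref{eSkoop3;3}) gives $|\psi_t|^2\le\|U_t\|_{2\to2}^2\,\frac{d(T_{-t*}\mu)}{d\mu}$ a.e., whence, using non-singularity once more,
\[
\|V_t\|_{2\to2}^2\le\Big\|\frac{d(T_{t*}\mu)}{d\mu}\Big\|_\infty=\Big(\essinf\frac{d(T_{-t*}\mu)}{d\mu}\Big)^{-1}\le\frac{\|U_t\|_{2\to2}^2}{\essinf|\psi_t|^2}=\|U_t\|_{2\to2}^2\,\|\psi_{-t}\|_\infty^2 ,
\]
and therefore $\sup_{|t|\le1}\|V_t\|_{2\to2}\le Me^{\omega}P<\infty$.

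Finally, for strong continuity it suffices, by the group law and the bound just obtained, to prove $V_tf\to f$ in $L_2$ as $t\to0$ for $f$ in the dense subspace $L_2\cap L_\infty$ (an $\varepsilon/3$ argument then covers all $f\in L_2$, and $V_{t_0+s}f-V_{t_0}f=V_{t_0}(V_sf-f)$ gives continuity at every $t_0$). Fix such an $f$ and a sequence $t_n\to0$. Since $U$ is a $C_0$-group, $U_{t_n}f\to f$ and $\psi_{t_n}=U_{t_n}\one\to\one$ in $L_2$; passing to a subsequence these hold a.e., so $V_{t_n}f=\psi_{t_n}^{-1}U_{t_n}f\to f$ a.e.\ (recall $\psi_t\ne0$ a.e.). Being bounded in $L_2$ and a.e.\ convergent to $f$, the sequence $V_{t_n}f$ converges weakly to $f$ in $L_2$; similarly $V_{t_n}(|f|^2)=|f|^2\circ T_{t_n}=|V_{t_n}f|^2\to|f|^2$ a.e.\ and is bounded in $L_2$ (here $|f|^2\in L_2\cap L_\infty$), so $V_{t_n}(|f|^2)\rightharpoonup|f|^2$, and pairing with $\one\in L_2$ gives $\|V_{t_n}f\|_2^2=(V_{t_n}(|f|^2),\one)\to(|f|^2,\one)=\|f\|_2^2$. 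Weak convergence together with convergence of norms in the Hilbert space $L_2$ forces $V_{t_n}f\to f$ in norm; since $(t_n)$ was arbitrary this proves $V_tf\to f$ as $t\to0$, hence~\ref{lkoop305-2}. The genuine difficulty throughout is that $V$ is not assumed strongly continuous from the outset: the local bound on $\|V_t\|_{2\to2}$ must be squeezed out of submultiplicativity and measurability rather than quoted from semigroup theory, and continuity at $0$ cannot be obtained by a naive estimate, because division by $\psi_t$ is not uniformly $L_\infty$-controlled---one is forced through a.e.\ convergence along subsequences and the ``weak~$+$~norm~$\Rightarrow$~strong'' principle.
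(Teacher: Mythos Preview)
Your proof is correct, but it takes a longer route than the paper's, and your closing remark about the ``genuine difficulty'' mis-diagnoses the situation.

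For the well-definedness of $V_t$ on $L_2$ the paper simply writes $V_t f = (\psi_{-t}\circ T_t)\,U_t f$ using (\ref{eSkoop3;3}); since $\psi_{-t}\in L_\infty$ by Lemma~\ref{lkoop303} and $T_t$ is non-singular, the factor $\psi_{-t}\circ T_t$ lies in $L_\infty$ and the product is in $L_2$. Your Radon--Nikodym computation is fine but less direct.

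For the uniform bound $\sup_{|t|\le1}\|\psi_t\|_\infty<\infty$, the paper invokes Theorem~\ref{tkoop206} (applied to the $C_0$-group $U$, which preserves $L_\infty$) to obtain $\|U_t f\|_\infty\le M e^{\omega|t|}\|f\|_\infty$; taking $f=\one$ gives the bound on $\psi_t$. Your Steinhaus argument is a correct, self-contained re-derivation of exactly this fact --- it is essentially the same mechanism as in the proof of Theorem~\ref{tkoop206}.

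The real divergence is in the strong continuity step. The paper observes that once $P:=\sup_{|t|\le1}\|\psi_t\|_\infty<\infty$, the identity (\ref{eSkoop3;3}) gives $\|1/\psi_t\|_\infty=\|\psi_{-t}\circ T_t\|_\infty\le\|\psi_{-t}\|_\infty\le P$ for $|t|\le1$, so division by $\psi_t$ \emph{is} uniformly $L_\infty$-controlled. Writing
\[
V_t f - f = \bigl((U_{-t}\one)\circ T_t\bigr)\bigl((U_t f - f) + (\one - U_t\one)\,f\bigr)
\]
yields immediately $\|V_t f - f\|_2 \le P\bigl(\|U_t f - f\|_2 + \|\one - U_t\one\|_2\,\|f\|_\infty\bigr)\to 0$ for $f\in L_\infty$, and density finishes. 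Your subsequence/a.e./weak-plus-norm argument is valid (the step ``bounded in $L_2$ and a.e.\ convergent $\Rightarrow$ weakly convergent'' holds on a finite measure space via uniform integrability in $L_1$), but it is considerably heavier than needed: you already had the uniform bound $P$ in hand, so the ``naive estimate'' you dismissed was in fact available and is exactly what the paper uses. One minor point: after passing to a subsequence you should phrase the conclusion as ``every subsequence has a further subsequence converging to $f$, hence the original sequence converges'' rather than ``since $(t_n)$ was arbitrary''.
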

\begin{proof} 
Note that (\ref{eSkoop3;1}) and (\ref{eSkoop3;3}) imply that 
\[
V_t f 
= f \circ T_t 
= (\psi_{-t} \circ T_t) \, U_t f
\in L_2
\]
for all $t \in \Ri$ and $f \in L_2$.
Then Statement~\ref{lkoop305-1} is a consequence of (\ref{eSkoop3;2.5}).

`\ref{lkoop305-2}'.
By Theorem~\ref{tkoop206} there exist $M \geq 1$ and $\omega \geq 0$
such that $\|U_t f\|_\infty \leq M \, e^{\omega |t|} \, \|f\|_\infty$ 
for all $t \in \Ri$ and $f \in L_\infty$.

Fix $f\in L_\infty$.
Let $t\in(0,1)$.
Then (\ref{eSkoop3;3}) gives
\begin{eqnarray}
V_t f - f
& = & \frac{1}{U_t \one} \Big( (U_t f - f) + (\one - U_t \one) f \Big)  \nonumber \\
& = & \Big( (U_{-t} \one) \circ T_t \Big) \Big( (U_t f - f) + (\one - U_t \one) f \Big)
.
\label{elkoop305;3}
\end{eqnarray}
Therefore 
\begin{eqnarray}
\|V_t f - f\|_2
& \leq & \|(U_{-t} \one) \circ T_t\|_\infty 
   \Big( \|U_t f - f\|_2 + \|\one - U_t \one\|_2 \, \|f\|_\infty \Big)  \nonumber   \\
& \leq & M \, e^\omega \, \Big( \|U_t f - f\|_2 + \|\one - U_t \one\|_2 \, \|f\|_\infty \Big)
\label{elkoop305;2}
\end{eqnarray}
and $\lim_{t \downarrow 0} V_t f = f$.
Then the result follows since $L_\infty$ is dense in $L_2$.
\end{proof}

\begin{prop} \label{pkoop306}
Let $(X,\cb,\mu)$ be a standard Borel probability space.
Let $U$ be a $C_0$-group on $L_2(X)$ preserving $L_\infty(X)$.
Then the following are equivalent.
\begin{tabeleq}
\item \label{pkoop306-1}
The representation $U$ is weighted non-singular.
\item \label{pkoop306-2}
For all $t \in \Ri$ one has $U_t \one \neq 0$ a.e.\ and $\frac{1}{U_t \one} \in L_\infty(X)$.
Moreover, $V = (V_t)_{t \in \Ri}$ is a $C_0$-group on $L_2(X)$, where
\begin{equation}
V_t f = \frac{1}{U_t \one} \, U_t f
\label{epkoop306;1}
\end{equation}
for all $t \in \Ri$.
In addition $D(B) \cap L_\infty(X)$ is an algebra and $B$ is a derivation on 
$D(B) \cap L_\infty(X)$, where $B$ is the generator of $V$.
\end{tabeleq}
\end{prop}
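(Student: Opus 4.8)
The plan is to prove the two implications separately, in each case reducing everything to the machinery already built in Section~\ref{Skoop2} (Proposition~\ref{pkoop201}, Corollary~\ref{ckoop207.5}) together with Lemmas~\ref{lkoop303} and~\ref{lkoop305}. The guiding observation is that $V$ is just the ``Koopman part'' of $U$, i.e.\ $V_t f = f \circ T_t$, so the derivation statement for $B$ should come for free from the corresponding statement for Koopman groups, and conversely recognising $V$ as a Koopman group recovers the weighted non-singular structure of $U$.

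For the implication \ref{pkoop306-1}$\Rightarrow$\ref{pkoop306-2}, I would start from $U_t f = \psi_t \cdot (f \circ T_t)$ with $\psi_t = U_t\one$ and invoke the standing facts of Section~\ref{Skoop3}: $\psi_t \neq 0$ a.e.\ and $1/\psi_t = \psi_{-t} \circ T_t$ by (\ref{eSkoop3;3}), and $T_t$ non-singular. Since $U$ preserves $L_\infty$, Lemma~\ref{lkoop303} gives $\psi_s = U_s\one \in L_\infty$ for every $s$; applying this with $s = -t$ and using non-singularity of $T_t$ yields $1/(U_t\one) = \psi_{-t} \circ T_t \in L_\infty$. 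Then Lemma~\ref{lkoop305} says that $f \mapsto f \circ T_t$ is a well-defined $C_0$-group on $L_2$, and relations (\ref{eSkoop3;1}), (\ref{eSkoop3;3}) identify it with the group $V$ of (\ref{epkoop306;1}). Finally, since $V$ is a $C_0$-group of the Koopman form $V_t f = f \circ T_t$ with $T_t$ measurable, Proposition~\ref{pkoop201} applied to $V$ directly gives that $D(B) \cap L_\infty$ is an algebra and $B$ is a derivation on it.

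For the implication \ref{pkoop306-2}$\Rightarrow$\ref{pkoop306-1}, the idea is to recognise $V$ as a genuine non-singular Koopman group via Corollary~\ref{ckoop207.5}. First I would check that $V$ preserves $L_\infty$: for $f \in L_\infty \subset L_2$ (the measure is finite) one has $U_t f \in L_\infty$ because $U$ preserves $L_\infty$, and $1/(U_t\one) \in L_\infty$ by hypothesis, so $V_t f = \tfrac{1}{U_t\one}\, U_t f \in L_\infty$. Together with the standing hypothesis that $D(B) \cap L_\infty$ is an algebra on which $B$ is a derivation, Corollary~\ref{ckoop207.5} applies to the $C_0$-group $V$ and produces, for each $t$, a measurable map $T_t \colon X \to X$ with $V_t f = f \circ T_t$ for all $f \in L_2$. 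Since $1/(U_t\one) \in L_\infty$ means $U_t\one$ is bounded away from $0$, I can multiply (\ref{epkoop306;1}) through by $U_t\one$ to get
\[
U_t f = (U_t\one) \cdot (f \circ T_t)
\]
for all $f \in L_2$; with $\psi_t := U_t\one$ this is exactly (\ref{eSkoop3;1}), so $U$ is weighted non-singular.

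I do not expect a single deep obstacle: the proposition is essentially a repackaging of Proposition~\ref{pkoop201}, Lemmas~\ref{lkoop303} and~\ref{lkoop305}, and Corollary~\ref{ckoop207.5}. The one point that needs care is the bookkeeping around division by $U_t\one$: one must genuinely use $1/(U_t\one) \in L_\infty$ (i.e.\ $|U_t\one|$ bounded below), not merely $U_t\one \neq 0$ a.e., so that multiplying an $L_2$-identity by $U_t\one$ and by its reciprocal is legitimate; and before invoking Lemma~\ref{lkoop305} or Proposition~\ref{pkoop201} one must verify that the two descriptions of $V$, namely $f \mapsto f \circ T_t$ and $f \mapsto \tfrac{1}{U_t\one}\, U_t f$, really coincide on all of $L_2$.
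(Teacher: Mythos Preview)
Your proposal is correct and follows essentially the same route as the paper: for \ref{pkoop306-1}$\Rightarrow$\ref{pkoop306-2} the paper cites (\ref{eSkoop3;3}), Lemma~\ref{lkoop305}\ref{lkoop305-2} and Proposition~\ref{pkoop201}, and for \ref{pkoop306-2}$\Rightarrow$\ref{pkoop306-1} it observes from (\ref{epkoop306;1}) that $V$ preserves $L_\infty$, applies Corollary~\ref{ckoop207.5}, and reads off (\ref{eSkoop3;1}). Your invocation of Lemma~\ref{lkoop303} is harmless but unnecessary, since $\psi_{-t}=U_{-t}\one\in L_\infty$ is immediate from the hypothesis that $U$ preserves $L_\infty$.
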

\begin{proof} 
`\ref{pkoop306-1}$\Rightarrow$\ref{pkoop306-2}'.
This follows from (\ref{eSkoop3;3}), Lemma~\ref{lkoop305}\ref{lkoop305-2}
and Proposition~\ref{pkoop201}.

`\ref{pkoop306-2}$\Rightarrow$\ref{pkoop306-1}'.
It follows from (\ref{epkoop306;1}) that $V$ leaves $L_\infty$ invariant.
Then apply Corollary~\ref{ckoop207.5} to $V$ and the result follows from (\ref{epkoop306;1}).
\end{proof}

\begin{cor}\label{ckoop309}
Let $(X,\cb,\mu)$ be a standard Borel probability space.
Let $U$ be a unitary $C_0$-group on $L_2(X)$ preserving $L_\infty(X)$.
Then the following are equivalent.
\begin{tabeleq}
\item \label{ckoop309-1}
The group $U$ is a weighted Koopman group.
\item \label{ckoop309-2}
For all $t \in \Ri$ one has $|U_t \one| = 1$ a.e.
Moreover, $V = (V_t)_{t \in \Ri}$ is a unitary $C_0$-group on $L_2(X)$, where
\[
V_t f = \overline{U_t \one} \cdot U_t f
\]
for all $t \in \Ri$.
In addition $D(B) \cap L_\infty(X)$ is an algebra and $B$ is a derivation on 
$D(B) \cap L_\infty(X)$, where $B$ is the generator of $V$.
\end{tabeleq}
\end{cor}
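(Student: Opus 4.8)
The plan is to read Corollary~\ref{ckoop309} off from Proposition~\ref{pkoop306} applied to $U$, supplemented only by Remark~\ref{rkoop304} and by the elementary fact that, for a non-singular map $T_t$, the induced operator $f \mapsto f \circ T_t$ is unitary on $L_2(X)$ precisely when $T_t$ is measure preserving. The point is that when $U$ is unitary the hypotheses and conclusion of Proposition~\ref{pkoop306} simplify: since $U$ preserves $L_\infty(X)$ we have $U_t \one \in L_\infty(X)$, and the combination ``$U_t \one \neq 0$ a.e.\ with $1/(U_t \one) \in L_\infty(X)$, plus $|U_t \one| = 1$ a.e.'' is equivalent to ``$1/(U_t \one) = \overline{U_t \one} \in L_\infty(X)$''; in that case the operator $V_t f = (1/U_t \one)\, U_t f$ of~(\ref{epkoop306;1}) is literally $\overline{U_t \one} \cdot U_t f$.

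For the direction `\ref{ckoop309-1}$\Rightarrow$\ref{ckoop309-2}' I would start from the observation that a weighted Koopman group is in particular weighted non-singular, so Proposition~\ref{pkoop306}\ref{pkoop306-2} applies verbatim and yields that $V = (V_t)_{t \in \Ri}$ is a $C_0$-group on $L_2(X)$ and that $D(B) \cap L_\infty(X)$ is an algebra on which the generator $B$ acts as a derivation. Remark~\ref{rkoop304} gives $|U_t \one| = 1$ a.e., so $V_t$ is of the asserted form $\overline{U_t \one}\cdot U_t f$; and by~(\ref{eSkoop3;3}) one has $V_t f = f \circ T_t$, which is an isometry of $L_2(X)$ because $T_t$ is measure preserving and is invertible with inverse $V_{-t}$ by~(\ref{eSkoop3;2.5}), hence unitary. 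For `\ref{ckoop309-2}$\Rightarrow$\ref{ckoop309-1}' I would check that $|U_t \one| = 1$ a.e.\ together with $U_t \one \in L_\infty(X)$ places us in the situation of Proposition~\ref{pkoop306}\ref{pkoop306-2}, conclude from it that $U$ is weighted non-singular with $U_t f = (U_t \one)\cdot(f \circ T_t)$ and $V_t f = f \circ T_t$, and then use the unitarity of $V$ to get $\int |f|^2\, d(T_{t*}\mu) = \|V_t f\|_2^2 = \|f\|_2^2 = \int |f|^2\, d\mu$ for all $f \in L_2(X)$, i.e.\ $T_{t*}\mu = \mu$, so that each $T_t$ is measure preserving and $U$ is a weighted Koopman group.

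I do not anticipate a genuine obstacle: the whole argument is bookkeeping on top of Proposition~\ref{pkoop306}. The two small points to be careful about are (a) that the map $T_t$ produced by Proposition~\ref{pkoop306} in the reverse direction is the same one appearing in $V_t f = f \circ T_t$ — this is immediate from~(\ref{eSkoop3;3}) once $U$ is known to be weighted non-singular, since then $f \circ T_t = (1/\psi_t)\, U_t f = V_t f$ — and (b) keeping straight that $|U_t \one| = 1$ a.e.\ is a \emph{conclusion} in the first direction (via Remark~\ref{rkoop304}) but a \emph{hypothesis} in the second.
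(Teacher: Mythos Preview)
Your proposal is correct and is precisely the intended argument: the paper states Corollary~\ref{ckoop309} without proof because it is meant to follow immediately from Proposition~\ref{pkoop306}, together with Remark~\ref{rkoop304} for $|U_t\one|=1$ and the standard fact that $f\mapsto f\circ T_t$ is unitary iff $T_t$ is measure preserving. Your handling of the two bookkeeping points (a) and (b) is accurate and nothing further is needed.
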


In order to obtain a relationship between the generators of the two $C_0$-groups in 
Lemma~\ref{lkoop305}\ref{lkoop305-2}, we need the following observation.

\begin{lemma} \label{lkoop307} 
Let $U$ be a weighted non-singular one-parameter $C_0$-group.
Let $V = (V_t)_{t \in \Ri}$ be the group on $L_2(X)$ as in Lemma~{\rm \ref{lkoop305}}.
Then 
\[
\lim_{t\to0} \|V_t(U_{-t}\one) \cdot g - g\|_2
= 0
\]
for all $g\in L_\infty(X)$.
\end{lemma}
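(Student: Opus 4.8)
The plan is to reduce everything to the known strong continuity of $V$ and $U$ together with the uniform $L_\infty$-bound for $U$ on $L_\infty$ that comes from Theorem~\ref{tkoop206}. Write $\psi_t = U_t \one$, so that $V_t (U_{-t}\one) = V_t \psi_{-t} = \psi_{-t}\circ T_t = 1/\psi_t$ by~(\ref{eSkoop3;3}); thus the quantity to estimate is $\| \psi_t^{-1} g - g\|_2$. First I would record, using Theorem~\ref{tkoop206} applied to $U$ (which preserves $L_\infty$ since it is weighted non-singular and preserves $L_\infty$ — or, more directly, since $V$ and the multiplication by $\psi_t$ both do), that there are $M \ge 1$, $\omega \ge 0$ with $\|\psi_t\|_\infty = \|U_t\one\|_\infty \le M e^{\omega|t|}$ and likewise $\|\psi_{-t}\|_\infty \le M e^{\omega|t|}$ for all $t$; hence $\|V_t(U_{-t}\one)\|_\infty = \|\psi_t^{-1}\|_\infty = \|\psi_{-t}\circ T_t\|_\infty \le \|\psi_{-t}\|_\infty \le M e^{\omega|t|}$, using non-singularity of $T_t$ for the last inequality. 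So the multipliers $V_t(U_{-t}\one)$ are uniformly bounded in $L_\infty$ for $t$ near $0$.

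The second step is a standard $3\varepsilon$-density argument. Because the operators $g \mapsto V_t(U_{-t}\one)\cdot g$ on $L_2$ are uniformly bounded for $t \in [-1,1]$ (by the bound just obtained, since multiplication by an $L_\infty$ function of norm $\le M e^\omega$ has operator norm $\le M e^\omega$ on $L_2$), and since $L_\infty$ is dense in $L_2$, it suffices to prove the convergence for $g$ in a dense subclass — but here the statement is already quantified over $g \in L_\infty$, so instead I would reduce the problem to showing $\lim_{t\to 0}\|V_t(U_{-t}\one) - \one\|_2 = 0$, i.e.\ the case $g = \one$. Indeed, once that is known, for general $g \in L_\infty$ one writes
\[
\|V_t(U_{-t}\one)\cdot g - g\|_2 \le \|g\|_\infty \, \|V_t(U_{-t}\one) - \one\|_2,
\]
which goes to $0$. (One does need $V_t(U_{-t}\one) - \one \in L_2$, which holds because both terms are in $L_2 \cap L_\infty$ on the probability space.)

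The third and central step is therefore: $\lim_{t\to0}\|V_t(U_{-t}\one) - \one\|_2 = 0$. Write $V_t(U_{-t}\one) - \one = V_t(U_{-t}\one) - V_t \one + V_t\one - \one = V_t(U_{-t}\one - \one) + (V_t\one - \one)$. The second summand tends to $0$ in $L_2$ by strong continuity of the $C_0$-group $V$ (Lemma~\ref{lkoop305}\ref{lkoop305-2}). For the first summand, I would use that $V_t$ is given by composition with the non-singular map $T_t$, and the Radon--Nikodym bound: $\|V_t h\|_2^2 = \int |h\circ T_t|^2 \, d\mu = \int |h|^2 \, \frac{d(T_{t*}\mu)}{d\mu}\, d\mu \le \big\|\frac{d(T_{t*}\mu)}{d\mu}\big\|_\infty \|h\|_2^2$, and this Radon--Nikodym derivative is uniformly bounded for $t$ near $0$ — this follows from Lemma~\ref{lkoop302} together with the uniform bounds on $\|\psi_{\pm t}\|_\infty$ and $\|U_t\|_{2\to 2}$ (the latter being uniformly bounded near $0$ by the $C_0$-group property of $U$). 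Hence $\|V_t(U_{-t}\one - \one)\|_2 \le C \|U_{-t}\one - \one\|_2 = C\|\psi_{-t} - \one\|_2 \to 0$ as $t \to 0$, again by strong continuity of $U$. Combining the two summands gives the claim.

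The main obstacle is the uniform-in-$t$ control near $0$ of the $L_\infty$-norms of $\psi_{\pm t}$ and of the Radon--Nikodym derivatives $\frac{d(T_{t*}\mu)}{d\mu}$: without it the estimate $\|V_t h\|_2 \le C\|h\|_2$ for $t$ near $0$ would fail and the decomposition argument collapses. Everything hinges on extracting those bounds from Theorem~\ref{tkoop206} and Lemma~\ref{lkoop302}; once they are in hand, the rest is the routine $V_t h_t = V_t(h_t - h) + (V_t h - h)$ splitting plus strong continuity of the two groups on $L_2$.
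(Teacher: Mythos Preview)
Your argument is correct and follows the same decomposition as the paper: pull out $\|g\|_\infty$ and split $V_t(U_{-t}\one)-\one = V_t(U_{-t}\one-\one)+(V_t\one-\one)$, then use strong continuity of $U$ and $V$ on the two pieces. The only difference is that you work much harder than needed to bound $\|V_t(U_{-t}\one-\one)\|_2$: you invoke Theorem~\ref{tkoop206} for uniform $L_\infty$-bounds on $\psi_{\pm t}$, then Lemma~\ref{lkoop302} to control the Radon--Nikodym derivatives, and only then deduce $\|V_t h\|_2\le C\|h\|_2$. But you already cited Lemma~\ref{lkoop305}\ref{lkoop305-2} to get that $V$ is a $C_0$-group, and every $C_0$-group is automatically locally bounded in operator norm; hence $\sup_{t\in[-1,1]}\|V_t\|_{2\to2}<\infty$ is immediate, and your entire first paragraph and the Radon--Nikodym discussion can be dropped. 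The paper's proof is just
\[
\|V_t(U_{-t}\one)\cdot g - g\|_2 \le \Big(\|V_t\|_{2\to2}\,\|U_{-t}\one-\one\|_2 + \|V_t\one-\one\|_2\Big)\|g\|_\infty,
\]
which is your inequality with the operator-norm bound used directly.
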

\begin{proof} 
It follows from Lemma~\ref{lkoop305}\ref{lkoop305-2} that $V$ is a $C_0$-group.
Hence $\sup_{t \in [-1,1]} \|V_t\|_{2 \to 2} < \infty$.
Let $t \in (-1,1)$.
Then
\begin{eqnarray*}
\|V_t(U_{-t}\one)\cdot g -g\|_2
& = & \|\Big( V_t(U_{-t}\one-\one)+ V_t\one-\one \Big) g\|_2  \\
& \leq & \Big( \|V_t\|_{2\to2} \|U_{-t}\one-\one\|_2 + \|V_t\one-\one\|_2 \Big) 
    \|g\|_\infty
\end{eqnarray*}
and the result follows.
\end{proof}

\begin{lemma}\label{lkoop308}
Let $U$ be a weighted non-singular one-parameter $C_0$-group.
Assume that $U$ preserves $L_\infty(X)$.
Let $V = (V_t)_{t \in \Ri}$ be the $C_0$-group on $L_2(X)$ as in Lemma~{\rm \ref{lkoop305}}.
Denote by $A$ and $B$ the generators of $U$ and $V$, respectively.
Assume that
\[
\one\in D(A).
\]
Then $D(A)\cap L_\infty(X)=D(B)\cap L_\infty(X)$ and $Bf=Af-f\cdot A\one$ 
for each $f\in D(A)\cap L_\infty(X)$.
\end{lemma}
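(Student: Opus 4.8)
The strategy is to compute the generator $B$ of $V$ on the dense set $D(A) \cap L_\infty(X)$ by using the explicit relation $V_t f = (U_{-t}\one \circ T_t) \cdot U_t f = V_t(U_{-t}\one) \cdot U_t f$ coming from~(\ref{eSkoop3;3}), together with the smoothing estimate of Lemma~\ref{lkoop307}. First I would fix $f \in D(A) \cap L_\infty(X)$ and write, for $t \neq 0$,
\[
\tfrac{1}{t}\bigl( V_t f - f \bigr)
= V_t(U_{-t}\one) \cdot \tfrac{1}{t}\bigl( U_t f - f \bigr)
  + \tfrac{1}{t}\bigl( V_t(U_{-t}\one) - \one \bigr) \cdot f
.
\]
For the first term, $\tfrac1t(U_t f - f) \to Af$ in $L_2$, and $V_t(U_{-t}\one)$ is bounded in $L_\infty$ (it equals $\tfrac{1}{U_t\one}$, which lies in $L_\infty$ by Lemma~\ref{lkoop303} applied via Proposition~\ref{pkoop306}, or directly from Lemma~\ref{lkoop302} and~(\ref{eSkoop3;3})) and tends to $\one$ in $L_2$ by Lemma~\ref{lkoop307}; multiplying a bounded-in-$L_\infty$ net converging to $\one$ in $L_2$ against an $L_2$-convergent net whose limit is in $L_2 \cap$ (something controllable) gives convergence to $Af$ in, say, $L_1$. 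For the second term I would use that $\one \in D(A)$ to identify the $L_2$-limit of $\tfrac1t(U_t\one - \one)$ as $A\one$, and then transfer this to $\tfrac1t(V_t(U_{-t}\one) - \one)$.

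The delicate point — and what I expect to be the main obstacle — is exactly this transfer: one must show $\tfrac1t\bigl(V_t(U_{-t}\one) - \one\bigr) \to A\one$ in an appropriate topology, even though $A\one$ need not be bounded, so the product $\bigl(\tfrac1t(V_t(U_{-t}\one)-\one)\bigr)\cdot f$ must be controlled only in $L_1$ (or weakly), not in $L_2$. I would handle this by testing against $h \in L_2 \cap L_\infty$ and writing $V_t(U_{-t}\one) = (U_{-t}\one) \circ T_t$, so that $\langle \tfrac1t(V_t(U_{-t}\one)-\one), \overline f\, h\rangle$ can be rewritten using the non-singular change of variables $T_t$ and the fact that $\tfrac{d(T_{t*}\mu)}{d\mu} \in L_\infty$ uniformly for $t \in [-1,1]$ (Lemma~\ref{lkoop302}, Lemma~\ref{lkoop303}); combining this with $\tfrac1t(U_{-t}\one - \one) \to -A\one$ in $L_2$ and the strong continuity of $V$ yields the claim. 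An alternative, cleaner route: first establish weak differentiability of $t \mapsto V_t f$ at $t=0$ against test functions $h \in D(A^*)\cap L_\infty$ using that $V_t f \in D(A)$-type regularity is inherited, mimicking the weak-derivative argument in the proof of Proposition~\ref{pkoop202}.

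Once the limit is identified as $Af - f \cdot A\one$ in a weak sense, I would upgrade: since $f \in D(A)\cap L_\infty$, the right-hand side $Af - f\cdot A\one$ lies in $L_2$ (note $f \cdot A\one \in L_2$ because $f \in L_\infty$ and $A\one \in L_2$), and the difference quotients $\tfrac1t(V_t f - f)$ are therefore weakly convergent in $L_2$ to this element; weak convergence of difference quotients of a $C_0$-semigroup to an $L_2$ element forces $f \in D(B)$ with $Bf = Af - f\cdot A\one$. This gives $D(A)\cap L_\infty \subseteq D(B)\cap L_\infty$ with the stated formula. For the reverse inclusion I would run the symmetric argument with the roles of $U$ and $V$ swapped: $U_t f = \psi_t \cdot V_t f = (U_t\one) \cdot V_t f$, and $U_t\one = V_t(\tfrac{1}{U_{-t}\one})^{-1}$-type identities let one express $A$ in terms of $B$ and $B(U_t\one)$-data; here one needs $\one \in D(B)$, which follows from $\one \in D(A)$ and the half of the equivalence already proved (since $\one \in D(A)\cap L_\infty$). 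Then $D(B)\cap L_\infty \subseteq D(A)\cap L_\infty$, giving equality, and the formula $Af = Bf + f\cdot A\one$ rearranges to $Bf = Af - f\cdot A\one$, completing the proof.
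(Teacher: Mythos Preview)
Your plan is correct and follows essentially the same route as the paper: the same decomposition (your two terms recombine into the paper's factored form~(\ref{elkoop305;3}), since $V_t(U_{-t}\one)-\one = V_t(U_{-t}\one)\,(\one-U_t\one)$), the same appeal to Lemma~\ref{lkoop307}, and the same reverse inclusion via $U_tf-f=(U_t\one)(V_tf-f)+(U_t\one-\one)f$ together with the $L_\infty$-bound on $U_t\one$ from Theorem~\ref{tkoop206}. The paper streamlines slightly by first invoking the norm estimate~(\ref{elkoop305;2}) and \cite{EN} Corollary~II.5.21 to obtain $f\in D(B)$ outright, and then identifies $Bf$ by pairing with $g\in L_\infty$ so that the factor $\overline{V_t(U_{-t}\one)}$ lands on $g$ and Lemma~\ref{lkoop307} applies directly---this sidesteps your $L_1$/weak-limit worry, which is in any case unnecessary because $f\in L_\infty$ already gives $f\cdot\tfrac1t(U_t\one-\one)\to f\cdot A\one$ in $L_2$.
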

\begin{proof}
Let $f\in D(A)\cap L_\infty$.
Since $\one \in D(A)$ 
it follows from (\ref{elkoop305;2}) that there exists a $c > 0$ such that 
$\|V_tf-f\|_2\leq c \, t$
for all $t\in (0,1)$.
Therefore $f\in D(B)$ by \cite{EN} Corollary~II.5.21.
Hence $D(A)\cap L_\infty\subset D(B)\cap L_\infty$.
Let $g\in L_\infty$.
Then (\ref{elkoop305;3}) gives
\[
\frac{1}{t} (V_t f-f,g)
= \Big( \frac{1}{t} (U_t f- f) - f \cdot \frac1t(U_t\one-\one) ,
       \overline{(U_{-t}\one) \circ T_t} \cdot g \Big)
\]
for all $t \in (0,1)$.
Now take the limit $t \to 0$ and use Lemma~\ref{lkoop307}.
It follows that 
\[
(Bf,g)=(Af-f\cdot A\one,g).
\]
Therefore $Bf=Af-(A\one)\cdot f$.

Conversely, let $f\in D(B)\cap L_\infty$.
Then $U_tf-f=(U_t\one)(V_tf-f)+(U_t\one-\one)f$ for all $t \in \Ri$.
The bounds (\ref{etkoop206;10}) of Theorem~\ref{tkoop206} imply that there exists
a $c > 0$ such that 
\[
\|U_tf-f\|_2\leq\|U_t\one\|_\infty\|V_tf-f\|_2+
\|U_t\one-\one\|_2\|f\|_\infty
\leq c \, |t|
\]
for all $t \in (0,1)$.
Hence $f \in D(A)$ as before.
\end{proof}

We can now prove the main theorem of this section.

\begin{thm} \label{tkoop310}
Let $(X,\cb,\mu)$ be a standard Borel probability space.
Let $U$ be a $C_0$-group on $L_2(X)$ preserving $L_\infty(X)$.
 Assume that $\one\in D(A)$ with $A\one\in L_\infty(X)$, 
where $A$ is the generator of $U$.
Then the following are equivalent.
\begin{tabeleq}
\item \label{tkoop310-1}
The representation $U$ is weighted non-singular.
\item \label{tkoop310-2}
The space $D(A)\cap L_\infty(X)$  is an algebra and
$A - (A\one) I$ is a derivation on $D(A)\cap L_\infty(X)$.
\end{tabeleq}
\end{thm}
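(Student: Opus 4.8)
The plan is to reduce the statement to Corollary~\ref{ckoop207.5} (equivalently Theorem~\ref{tkoop101}) applied to the auxiliary group $V$ constructed in Lemma~\ref{lkoop305}, using Lemma~\ref{lkoop308} to transport the derivation condition between the generators $A$ of $U$ and $B$ of $V$. The two directions split naturally.

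For \ref{tkoop310-1}$\Rightarrow$\ref{tkoop310-2}: assume $U$ is weighted non-singular, so $U_t f = \psi_t \cdot (f \circ T_t)$ with $\psi_t = U_t \one$. Since $U$ preserves $L_\infty$, Lemma~\ref{lkoop303} gives $\psi_t \in L_\infty$ and $\tfrac{1}{\psi_t} \in L_\infty$ (the latter via~(\ref{eSkoop3;3}) applied at $-t$, since $\tfrac{1}{\psi_{-t}} = \psi_t \circ T_{-t}$ and $T_{-t}$ is non-singular). Hence Lemma~\ref{lkoop305} applies and $V_t f = f \circ T_t$ defines a $C_0$-group on $L_2$; by Proposition~\ref{pkoop201} (with $c = 1$, as each $T_t$ is non-singular so $\|V_t f\|_\infty \le \|f\|_\infty$ on $L_2 \cap L_\infty$), the space $D(B) \cap L_\infty$ is an algebra and $B$ is a derivation on it. Since $\one \in D(A)$ and $A\one \in L_\infty$, Lemma~\ref{lkoop308} yields $D(A) \cap L_\infty = D(B) \cap L_\infty$ and $B f = A f - (A\one) \cdot f$ on this common space. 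Therefore $D(A) \cap L_\infty$ is an algebra and $A - (A\one) I$ is a derivation on it.

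For \ref{tkoop310-2}$\Rightarrow$\ref{tkoop310-1}: this is the direction I expect to require the most care, since we must \emph{construct} the maps $T_t$ and $\psi_t$. The idea is to produce the group $V$ directly and check it satisfies the hypotheses of Corollary~\ref{ckoop207.5}. First, since $U$ preserves $L_\infty$, Theorem~\ref{tkoop206} provides bounds $\|U_t f\|_\infty \le M e^{\omega|t|} \|f\|_\infty$. Since $\one \in D(A)$, the map $t \mapsto U_t \one$ is differentiable in $L_2$, hence $\|U_t \one - \one\|_2 \le c|t|$ for small $t$. The key point is to show $U_t \one \neq 0$ a.e.\ with $\tfrac{1}{U_t \one} \in L_\infty$: one expects this to follow from the derivation hypothesis on $A - (A\one) I$ together with the structure of the problem, perhaps by first defining a candidate group on $L_\infty$ via the exponential of the derivation and identifying it with (a restriction of) $U$. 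Once $\tfrac{1}{U_t \one} \in L_\infty$ is known, set $V_t f = \tfrac{1}{U_t \one} \, U_t f$; the bound~(\ref{elkoop305;2})-type estimate (now available because $\one \in D(A)$) shows $V$ is a $C_0$-group on $L_2$ preserving $L_\infty$. Its generator $B$ satisfies $Bf = Af - (A\one) f$ on $D(A) \cap L_\infty$ by the computation in the proof of Lemma~\ref{lkoop308} (which only used $\one \in D(A)$ and the $L_\infty$-invariance), so the hypothesis on $A - (A\one) I$ says exactly that $D(B) \cap L_\infty$ is an algebra and $B$ is a derivation on it. Now Corollary~\ref{ckoop207.5} applies to $V$: there exist measurable $T_t \colon X \to X$ with $V_t f = f \circ T_t$. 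Unwinding, $U_t f = (U_t \one) \cdot (f \circ T_t)$, so $U$ is weighted non-singular with $\psi_t = U_t \one$.

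The main obstacle is establishing $U_t\one \neq 0$ a.e.\ and $\tfrac{1}{U_t\one} \in L_\infty$ purely from Condition~\ref{tkoop310-2}, before $V$ is even available — this is the analogue of ruling out Example~\ref{xkoop304.5}. I would handle it by running the derivation exponentiation argument of Proposition~\ref{pkoop202} in a shifted form: the operator $A - (A\one) I$ generates a $C_0$-group (a bounded perturbation of $U$), and since it is a derivation on $D(A) \cap L_\infty$, that group is multiplicative on $L_\infty$ by Proposition~\ref{pkoop202}; multiplicativity forces its action on $\one$ to be $\one$, and comparing with $U$ shows $U_t\one$ is an invertible element of $L_\infty$ with controlled inverse. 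Everything else is bookkeeping with the consistency lemmas already proved.
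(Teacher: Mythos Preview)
Your treatment of \ref{tkoop310-1}$\Rightarrow$\ref{tkoop310-2} is correct and coincides with the paper's route via Proposition~\ref{pkoop306} and Lemma~\ref{lkoop308}.

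For \ref{tkoop310-2}$\Rightarrow$\ref{tkoop310-1} you have correctly located the crux --- produce the $C_0$-group $V$ generated by $B:=A-(A\one)I$ (a bounded perturbation, so this is immediate) and show it is of Koopman type --- but there is a real gap in your argument for the step ``$V$ is multiplicative on $L_\infty$ by Proposition~\ref{pkoop202}''. Proposition~\ref{pkoop202} has as a \emph{hypothesis} that $\|V_t f\|_\infty\le c\|f\|_\infty$ uniformly for $t\in[-1,1]$, and likewise Corollary~\ref{ckoop207.5} requires that $V$ leave $L_\infty$ invariant. You have not established this. Knowing that $U$ preserves $L_\infty$ with constants $M,\omega$ and that $e^{-s(A\one)I}$ preserves $L_\infty$ does not help: the naive Trotter product bound picks up a factor $M^n$, and in any case Trotter--Kato is unavailable on $L_\infty$ since the extended group there is only $w^*$-continuous. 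The observation $B\one=0$, hence $V_t\one=\one$, is true but does not by itself force $L_\infty$-invariance of $V$.

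The paper closes this gap by passing to the predual. Since $U$ preserves $L_\infty$, Theorem~\ref{tkoop206} gives that $U^*$ extends consistently to a $C_0$-group $\widehat U$ on $L_1$, with generator $\widehat A$. Now Trotter--Kato is legitimate both on $L_2$ (for $V^*$, generated by $A^*-\overline{A\one}\,I$) and on $L_1$ (for the $C_0$-group $\widehat V$ generated by $\widehat A-\overline{A\one}\,I$); comparing the two product formulas on $L_2\subset L_1$ shows $V^*$ and $\widehat V$ are consistent. Feeding $S=V^*$ back into Theorem~\ref{tkoop206} now yields that $V$ preserves $L_\infty$, after which Theorem~\ref{tkoop207} gives $V_tf=f\circ T_t$. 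The paper then recovers $U$ not by inverting $U_t\one$ as you propose, but by a second Trotter--Kato argument together with the commutation identity $V_t\circ e^{s(A\one)I}=e^{s((A\one)\circ T_t)I}\circ V_t$, which produces the explicit formula $U_t=\psi_t\,V_t$ with $\psi_t=\exp\bigl(\int_0^t (A\one)\circ T_r\,dr\bigr)\in L_\infty$. Your ``comparing with $U$'' would need an argument of this kind to be made precise.
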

\begin{proof}
`\ref{tkoop310-1}$\Rightarrow$\ref{tkoop310-2}'.
This follows from Proposition~\ref{pkoop306} and Lemma~\ref{lkoop308}.
Note that this implication does not require the assumption $A\one\in L_\infty$.

`\ref{tkoop310-2}$\Rightarrow$\ref{tkoop310-1}'.
Consider first $U^*$, which is a $C_0$-group on $L_2$ whose generator is $A^*$.
By Theorem~\ref{tkoop206}\ref{tkoop206-2}$\Rightarrow$\ref{tkoop206-1}
the one-parameter group $U^*$ extends consistently to a 
$C_0$-group $\widehat{U}$ on $L_1$.
Denote by $\widehat{A}$ the generator of this group.

Since $(A\one) I$ is  a bounded operator
the operator $A-(A\one)I$ generates a $C_0$-group $V$ on $L_2$ 
by perturbation theory \cite{EN}, Theorem~III.1.3. 
Then $A^*-\overline{(A\one)}I$ is the generator of $V^*$.
Moreover, again by perturbation theory, $\widehat{A}-\overline{(A\one)}I$ 
is the generator of a $C_0$-group $\widehat{V}$ on $L_1$.
Let $t\in\Ri$.
The Trotter--Kato formula \cite{EN} Exercise~III.5.11(1) gives
\[
V_t^*
=\lim_{n\to\infty}\left(e^{-\frac tn\overline{(A\one)} I} \, U^*_{\frac tn}\right)^n
   \;\;\mbox{strongly in } \cl(L_2)
\]
and
\[
\widehat{V}_t
=\lim_{n\to\infty}\left(e^{-\frac tn\overline{(A\one)} I} \, \widehat{U}_{\frac tn}\right)^n
\;\;\mbox{strongly in } \cl(L_1).
\]
Let $f\in L_2$.
Then $f \in L_1$ and since $U^*$ and $\widehat U$ are consistent one deduces that 
\[
\left(e^{-\frac tn\overline{(A\one)} I} \, U^*_{\frac tn}\right)^n f
=\left(e^{-\frac tn\overline{(A\one)} I} \, \widehat{U}_{\frac tn}\right)^nf \;\mbox{a.e.}
\]
for all $n\in\Ni$.
Hence $V^*_t f = \widehat{V}_t f$ a.e. and $V^*$ and $\widehat{V}$ are consistent.
By Theorem~\ref{tkoop206}\ref{tkoop206-1}$\Rightarrow$\ref{tkoop206-2},
applied with $S=V^*$, 
it follows that $V$ leaves the space $L_\infty$ invariant.
By Theorem~\ref{tkoop207} it follows that for all $t \in \Ri$ there exists a 
non-singular measurable map $T_t \colon X\to X$ such that 
$V_tf=f\circ T_t$ for all $f \in L_2$.

Note that  
\[
\Big( V_t \circ e^{s(A\one)I} \Big) f
= V_t (e^{s(A\one)} f)
= (e^{s(A\one)} f) \circ T_t
= \Big( e^{s ((A\one)\circ T_s) I} \circ V_t \Big) f
\]
for all $t,s\in \Ri$ and $f \in L_2$.
Iteration gives
\begin{equation}
\left( V_{\frac{t}{n}} \circ e^{\frac{t}{n} (A\one) I} \right)^n
= e^{\frac{t}{n} \big((A\one)\circ T_{\frac{t}{n}}+\ldots+ (A\one)\circ T_{\frac{nt}{n}} \big) I }
    \circ \Big( V_{\frac{t}{n}} \Big)^n 
= e^{\frac{t}{n} \big((A\one)\circ T_{\frac{t}{n}}+\ldots+ (A\one)\circ T_{\frac{nt}{n}} \big) I }
    \circ V_t
\label{etkoop310;1}
\end{equation}
for all $t \in \Ri$ and $n \in \Ni$.
Since $A = (A - (A\one) I) + (A\one) I$, one can consider the generator 
of the $C_0$-group $U$ as a perturbation of the generator of the $C_0$-group $V$.
Then the Trotter--Kato formula gives 
\[
U_t = \lim_{n \to \infty} \left( V_{\frac{t}{n}} \circ e^{\frac{t}{n} (A\one) I} \right)^n
\]
strongly in $\cl(L_2)$.
Hence (\ref{etkoop310;1}) gives $U_t = \psi_t \cdot V_t$ for all $t \in \Ri$, where
\[
\psi_t = e^{ \int_0^t (A\one) \circ T_r \, dr}
\in L_\infty
.  \]
This completes the proof.
\end{proof}

Clearly Theorem~\ref{tkoop311} is a consequence of Theorem~\ref{tkoop310}.

\medskip

The condition $\one \in D(A)$ is not satisfied in general.
We give a wide class of examples.

\begin{exam} \label{xkoop312}
Let $V = (V_t)_{t \in \Ri}$ be a unitary $C_0$-group on $L_2(X)$ given by 
a measure preserving flow $T=(T_t)_{t\in\Ri}$ which is ergodic. 
So $V_t f = f \circ T_t$ for all $t \in \Ri$ and $f \in L_2(X)$ and 
the only $f \in L_2(X)$ which are invariant under $V_t$ for all $t \in \Ri$
are the constants.
We will now show that for all $t \in \Ri$ 
we can find a measurable $\psi_t\colon X\to \Ri$, 
bounded and bounded away from zero, such that $U = (U_t)_{t \in \Ri}$ is 
a continuous $C_0$-group on $L_2(X)$ for which $\one\notin D(A)$,
where 
\[
U_tf= \psi_t\cdot (f\circ T_t)
\]
for all $t \in \Ri$.

Indeed, by Ambrose--Kakutani theorem, see for example \cite{CFS} Theorem~11.2.1, 
we can represent $T$ 
as a special flow over an ergodic automorphism $S$ of a standard Borel probability 
space $(Y,\cc,\rho)$, i.e.\ there exist $F\colon Y \to \Ri$ and $c > 0$
such that $F>c$,
$\int_YF\,d\rho < \infty$ and
\[
X=Y^F:=\{(y,s) \in Y\times \Ri : 0\leq s\leq F(y) \}.
\]
On $Y^F$ we consider the restriction of the product measurable structure from 
$Y\times\Ri$ together with $\rho^F:= (\rho\otimes \Leb_{\Ri})|_{Y^F}$.
The flow $T$ acts as $S^F=(S_t^F)_{t\in\Ri}$, where under the action of 
$S^F_t$ (with $t>0$) a point $(y,r)$ moves up vertically with unit speed until 
it hits the point $(y,f(y))$ which is identified with $(Sy,0)$ and this movement 
is continued until time $t$.
In this way we obtain
a unitary $C_0$-group $V=(V_t)_{t\in\Ri}$, where $V_tf=f\circ S^F_t$ on 
$L_2(Y^F,\rho^F)$.

Let $a,b \in \Ri$ be such that $0<a<b<c$ and consider the strip $H:=Y\times[a,b]$.
Then $H\subset Y^F$ and $\rho^F(H)=b-a$.
For each $t\in\Ri$ with  $|t| < a \wedge (c-b) \wedge (b-a)$ one has
\begin{equation} \label{eSkoop3;10}
\rho^F(H\triangle S^F_t(H))=2|t|.
\end{equation}
We claim that $g:=\one_H\notin D(B)$, where $B$ is the generator of $V$.
Indeed, for all $t\in\Ri$ with  $|t| < a \wedge (c-b) \wedge (b-a)$, 
it follows from (\ref{eSkoop3;10}) that 
\[
\|g-g\circ S^F_t\|_2
=\Big(\int_Y |\one_H-\one_H\circ S^F_t|^2\,d\rho^F\Big)^{1/2}
=\left(\rho^F(H\triangle S^F_t(H))\right)^{1/2}
=\sqrt{2|t|}.
\]
Therefore there is no constant $\kappa>0$ such that
$\|g-g\circ S^F_t\|_2\leq \kappa \, |t|$ for all sufficiently small $|t|>0$ and hence 
$g\notin D(B)$.

Let $\theta:=g + \one$.
Then $\theta\notin D(B)$ and $\theta,\frac{1}{\theta} \in L_\infty(Y^F)$.
Set $\psi_t:=\frac{\theta}{\theta\circ S^F_t}$ for all $t\in\Ri$.
Then $(\psi_t)_{t\in\Ri}$ satisfies the cocycle identity~(\ref{eSkoop3;2}) and by setting
\[
U_t f
= \psi_t \cdot (f\circ S^F_t),
\]
we obtain a $C_0$-group $U = (U_t)_{t \in \Ri}$ on $L_2(Y^F)$.
Now
\[
\frac{1}{t} \, (U_t\one - \one)
=\frac{1}{t} \, (\theta-\theta\circ S^F_t) \cdot \frac{1}{\theta \circ S^F_t}
=\frac{1}{t} \, (\theta- V_t \theta) \cdot \frac{1}{\theta \circ S^F_t}
\]
and since $V$ is a $C_0$-group and $\theta\notin D(B)$, we must have $\one\notin D(A)$,
where $A$ is the generator of $U$.
\end{exam}

\begin{remark} \label{rkoop313}
By considering the function $\xi = (-1)^{\one_H} = \one_{X \setminus H} - \one_H$, 
we obtain a measurable function for which $\xi\notin D(B)$ taking values in 
$\{ -1,1 \}$, and if we set $\psi_t:= \frac{\xi}{\xi\circ S^F_t}$, then the corresponding group 
$U$ is weighted Koopman for which $\one\notin D(A)$.
\end{remark}

Even if $\one \in D(A)$, then in general $A \one \not\in L_\infty(X)$.
An example is as follows.

\begin{exam} \label{xkoop314}
Let $\Ti = \{ z \in \Ci : |z| = 1 \} $ be the torus with normalized Haar measure.
Let 
\[
E = \{ \eta \in L_2(\Ti) : \int \eta = 0 \} 
.  \]
Then $E$ is a closed subspace of $L_2(\Ti)$.
We provide $E$ with the norm of $L_2(\Ti)$.
For all $\eta \in L_2(\Ti)$ define $\tilde \eta \in L_{2,\loc}(\Ri)$ by 
$\tilde \eta(x) = \eta(e^{ix})$.

Fix $\zeta \in E$.
For all $t \in \Ri$ define $\varphi_t \in C(\Ti)$ by 
\[
\varphi_t(e^{ix}) = \int_x^{x+t} \tilde \zeta
.  \]
Note that $\varphi_t$ is well defined.
Since $\int_\Ti \zeta = 0$ one deduces that $\|\varphi_t\|_\infty \leq 2 \pi \|\zeta\|_1$.
If $s,t \in \Ri$ then 
\[
\tilde \varphi_{t+s}(x)
= \int_x^{x+t} \tilde \zeta + \int_{x+t}^{x+t+s} \tilde \zeta
= \tilde \varphi_t(x) + \tilde \varphi_s(x+t)
\]
for all $x \in \Ri$.
For all $t \in \Ri$ define $\psi_t \in C(\Ti)$ by 
\[
\psi_t = e^{\varphi_t}
\]
and define $U_t \colon L_2(\Ti) \to L_2(\Ti)$ by 
\[
(U_t f)(z) = \psi_t(z) \, f(e^{it} \, z)
.  \]
It is easy to verify that $U_t L_\infty(\Ti) \subset L_\infty(\Ti)$
for all $t \in \Ri$ and that $U = (U_t)_{t \in \Ri}$ is a 
$C_0$-group.
Let $A$ be the generator of $U$.
Clearly $\psi_t = U_t \one$ for all $t \in \Ri$.
Up to now everything also works if $\zeta \in L_1(\Ti)$ with $\int \zeta = 0$, 
but from now on we use that $\zeta \in L_2(\Ti)$.
We shall prove that $\one \in D(A)$ and $A \one = \zeta$.

Let $t \in (0,1)$.
Then 
\begin{eqnarray}
\Big| \frac{1}{t} (U_t \one - \one) - \zeta \Big|
& \leq & \Big| \frac{e^{\varphi_t} - \one - \varphi_t}{t} \Big|
    + \Big| \frac{1}{t} \, \varphi_t - \zeta \Big|  \nonumber  \\
& \leq & \frac{1}{t} |\varphi_t|^2 \, e^{|\varphi_t|}
    + \Big| \frac{1}{t} \, \varphi_t - \zeta \Big|  \nonumber   \\
& \leq & \frac{1}{t} |\varphi_t|^2 \, e^{2 \pi \, \|\zeta\|_1}
    + \Big| \frac{1}{t} \, \varphi_t - \zeta \Big| 
. \label{exkoop314;1} 
\end{eqnarray}
We estimate the terms in (\ref{exkoop314;1}) separately in $L_2(\Ti)$
in the limit $t \downarrow 0$.

We start with the second term.
For all $t \in (0,1)$ define $F_t \colon E \to C(\Ti)$ by 
\[
(F_t \eta)(e^{ix}) 
= \frac{1}{t} \, \int_x^{x+t} \tilde \eta
.  \]
Note that $F_t(\zeta) = \frac{1}{t} \, \varphi_t$.
Let $\eta \in E$ and $\tau \in L_2(\Ti)$.
Then Fubini and Cauchy--Schwarz give
\begin{eqnarray*}
|(F_t(\eta), \tau)_{L_2(\Ti)}|
& = & \frac{1}{t} \Big| \int_0^{2\pi} \int_x^{x+t} \tilde \eta(s) \, ds 
             \, \overline{\tilde \tau(x)} \, dx \Big|  \\
& = & \frac{1}{t} \Big| \int_0^{2\pi} \int_0^t \tilde \eta(x+s) \, ds 
             \, \overline{\tilde \tau(x)} \, dx \Big|  \\
& = & \frac{1}{t} \Big| \int_0^t \int_0^{2\pi} \tilde \eta(x+s) 
             \, \overline{\tilde \tau(x)} \, dx \, ds \Big| \\
& \leq & \frac{1}{t} \int_0^t 2 \pi \, \|\eta\|_{L_2(\Ti)} \, \|\tau\|_{L_2(\Ti)} \, ds  \\
& = & 2 \pi \, \|\eta\|_{L_2(\Ti)} \, \|\tau\|_{L_2(\Ti)}
.
\end{eqnarray*}
So $\|F_t(\eta)\|_{L_2(\Ti)} \leq 2 \pi \, \|\eta\|_{L_2(\Ti)}$ and 
the set $ \{ F_t : t \in (0,1) \} $ is bounded in $\cl(E,L_2(\Ti))$.
Clearly $\lim_{t \downarrow 0} F_t(\eta) = \eta$ in $L_2(\Ti)$ for all $\eta \in C(\Ti)$.
Since $E \cap C(\Ti)$ is dense in $E$, it follows that 
$\lim_{t \downarrow 0} F_t(\eta) = \eta$ in $L_2(\Ti)$ for all $\eta \in E$.
In particular for $\zeta$ one deduces that 
\begin{equation}
\lim_{t \downarrow 0} \Big| \frac{1}{t} \, \varphi_t - \zeta \Big| = 0
\label{exkoop314;2}
\end{equation}
in $L_2(\Ti)$.
This settles the second term in (\ref{exkoop314;1}).

Now we consider the first term in (\ref{exkoop314;1}).
We shall show that $\lim_{t \downarrow 0} \frac{1}{t} |\varphi_t|^2 = 0$ in $L_2(\Ti)$.
If $t \in (0,1)$, then 
\[
|\varphi_t(e^{ix})|
= \Big| \int_x^{x+t} \tilde \zeta \Big|
\leq  \sqrt{2 \pi \, t} \, \|\zeta\|_2
\]
for all $x \in \Ri$ by the Cauchy--Schwarz inequality.
So $\| \frac{1}{t} |\varphi_t|^2 \|_\infty \leq 2 \pi \, \|\zeta\|_2^2$
for all $t \in (0,1)$.
Let $t_1,t_2,\ldots \in (0,1)$ and assume that $\lim_{n \to \infty} t_n = 0$.
Then passing to a subsequence if necessary, it follows from (\ref{exkoop314;2})
that $\lim_{n \to \infty} \frac{1}{t_n} \, \varphi_{t_n}(z) = \zeta(z)$ 
for a.e.\ $z \in \Ti$.
Hence 
\[
\lim_{n \to \infty} \frac{1}{t_n} \, |\varphi_{t_n}(z)|^2
= \lim_{n \to \infty} t_n \, \Big| \frac{1}{t_n} \, \varphi_{t_n}(z) \Big|^2
= 0
\]
for a.e.\ $z \in \Ti$.
Then the bounded convergence theorem of Lebesgue gives
$\lim_{n \to \infty} \frac{1}{t_n} |\varphi_{t_n}|^2 = 0$ in $L_2(\Ti)$.
Hence $\lim_{t \downarrow 0} \frac{1}{t} |\varphi_t|^2 = 0$ in $L_2(\Ti)$.

Combining the two estimates
it follows from (\ref{exkoop314;1}) that $\one \in D(A)$ and $A \one = \zeta$.
Finally, if one chooses $\zeta \in E$ such that $\zeta \not\in L_\infty(\Ti)$, 
then $A \one \not\in L_\infty(\Ti)$.
\end{exam}

\section{Cocycles} \label{Skoop4}

In the previous section we started with a group $U$ on $L_2(X)$ and 
in case $U$ was weighted non-singular as in (\ref{eSkoop3;1}), we defined
the representation $V$ given by $V_t f = f \circ T_t$.
In that case $U_t = \psi_t \, V_t$.
In this section we reverse the order.
We start with a representation of the form $V_t f = f \circ T_t$
and wish to construct as general as possible a representation $U$ 
of the form (\ref{eSkoop3;1}), that is  $U_t = \psi_t \, V_t$ for all $t \in \Ri$.

Throughout this section let $(X,\cb,\mu)$ be a 
standard Borel probability space.
For all $t\in\Ri$ let $T_t \colon X\to X$ be a measurable map
such that $V = (V_t)_{t \in \Ri}$ is a $C_0$-group on $L_2(X)$, 
where $V_t f:=f\circ T_t$ for all $t\in\Ri$.
Let $B$ be the generator of $V$.Ê 

We need a few definitions.
A mapÊ $\psi \colon \Ri \to L_\infty(X)$ is said to be a 
{\bf cocycle (over $V$)} if 
\begin{equation}
\psi_{t+t'} = \psi_t \cdot (\psi_{t'} \circ T_t)
\label{eSkoop401;20}
\end{equation}
for all $t,t'\in\Ri$, where we write for simplicity $\psi_t = \psi(t)$
for all $t \in \Ri$.
Note that $\psi = 0$ is a cocycle over $V$.
Suppose that $\psi$ is a cocycle.
For all $t \in \Ri$ define $U_t = \psi_t \, V_t \in \cl(L_2(X))$.
Clearly $\|U_t\|_{2 \to 2} \leq \|\psi_t\|_\infty \|V_t\|_{2\to2}$.
If $t,t'\in\Ri$ then
\[
U_{t+t'}f
= \Big( \psi_t\cdot(\psi_{t'}\circ T_t) \Big) V_{t+t'}f
= U_t (U_{t'}f)
= (U_t \circ U_{t'}) f
\]
for all $f\in L_2(X)$, 
so $U = (U_t)_{t\in\Ri}$ is a one-parameter group on $L_2(X)$, 
which leaves $L_\infty(X)$ invariant.
We call $U$ the {\bf  one-parameter group associated with $\psi$}.
Possibly $U_0 = 0$.
With a continuity condition this is not the case.

\begin{lemma} \label{lkoop400.3}
If $\lim_{t \to 0} \|\psi_t - \one\|_1 = 0$, then $\psi_0 = \one$ a.e.\ and $U_0 = I$.
\end{lemma}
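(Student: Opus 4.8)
The plan is to exploit the cocycle identity \eqref{eSkoop401;20} at $t' = t = 0$, which gives $\psi_0 = \psi_0 \cdot (\psi_0 \circ T_0)$, together with the hypothesis that $\psi_t \to \one$ in $L_1(X)$ as $t \to 0$. First I would identify $\psi_0$. Setting $t = t' = 0$ in \eqref{eSkoop401;20} yields $\psi_0 = \psi_0^2 \circ T_0 \cdot$ --- more precisely $\psi_0 = \psi_0 \cdot (\psi_0 \circ T_0)$; but I do not yet know $T_0$, so instead I would argue directly from the $L_1$-limit: since $\psi_{0+0} = \psi_0 \cdot (\psi_0 \circ T_0)$ and also, letting $t \to 0$ along any sequence, $\psi_t \to \one$ in $L_1$, one should be able to deduce $\psi_0 = \one$ a.e. The cleanest route is to note that $\psi_0 = \psi_t \cdot (\psi_{-t} \circ T_t)$ is not available here (we have no inverse a priori), so instead I would use the identity $\psi_t = \psi_{t+0} = \psi_t \cdot (\psi_0 \circ T_t)$, valid for all $t$. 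Taking $t \to 0$ in $L_1$ on the left gives $\one$; on the right I would need some control, so the better identity is $\psi_t = \psi_{0+t} = \psi_0 \cdot (\psi_t \circ T_0)$.

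Concretely, from $\psi_t = \psi_0 \cdot (\psi_t \circ T_0) = \psi_0 \cdot (V_0 \psi_t)$ for all $t \in \Ri$, I would let $t \to 0$. The left-hand side tends to $\one$ in $L_1$. For the right-hand side, $V_0$ is a bounded operator on $L_2$, hence (being composition with the fixed map $T_0$) on $L_1$ as well with the relevant bound coming from $\|V_0\|$ and the associated Radon--Nikodym derivative being bounded; since $\psi_t \to \one$ in $L_1$, also $V_0 \psi_t \to V_0 \one = \one$ in $L_1$. But multiplication by the fixed function $\psi_0 \in L_\infty$ is continuous on $L_1$, so $\psi_0 \cdot (V_0 \psi_t) \to \psi_0 \cdot \one = \psi_0$ in $L_1$. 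Uniqueness of $L_1$-limits then forces $\psi_0 = \one$ a.e.

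With $\psi_0 = \one$ established, the second assertion is immediate: $U_0 = \psi_0 \, V_0 = \one \cdot V_0$, and since $V_0 f = f \circ T_0$ and $V = (V_t)$ is a $C_0$-group (in particular $V_0 = I$), we get $U_0 = I$. Alternatively, avoiding any appeal to the structure of $T_0$ on $L_1$: once $\psi_0 = \one$, the cocycle identity $\psi_t = \psi_0 \cdot (\psi_t \circ T_0) = \psi_t \circ T_0$ for all $t$, combined with $U_t = \psi_t V_t$, gives $U_0 = \psi_0 V_0 = V_0$, and $V_0 = I$ from the $C_0$-group property.

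The main obstacle I anticipate is justifying the passage to the limit in the product $\psi_0 \cdot (\psi_t \circ T_0)$: one must check that $f \mapsto f \circ T_0$ is a bounded operator on $L_1(X)$ (not just $L_2$), which follows because $T_0$ is non-singular with bounded Radon--Nikodym derivative --- but a priori we only know $V_0$ is bounded on $L_2$. A safe workaround is to first extract, for any sequence $t_n \to 0$, a subsequence along which $\psi_{t_n} \to \one$ a.e.; then $\psi_{t_n} \circ T_0 \to \one$ a.e.\ as well (composition with a fixed measurable map preserves a.e.\ convergence up to the null sets where $T_0$ lands, which are negligible if $T_0$ is non-singular), so $\psi_0 \cdot (\psi_{t_n} \circ T_0) \to \psi_0$ a.e., while the left side $\psi_{t_n} \to \one$ a.e.\ along a further subsequence; hence $\psi_0 = \one$ a.e. Since this holds for every sequence $t_n \to 0$, no genuine limit-interchange issue remains, and the non-singularity of $T_0$ is the only structural input needed.
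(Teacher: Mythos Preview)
Your core idea is correct, but you are making life harder than necessary by forgetting until the very end the standing hypothesis of Section~\ref{Skoop4}: $V$ is a $C_0$-group on $L_2(X)$, so $V_0 = I$. In particular, since $\psi_t \in L_\infty(X) \subset L_2(X)$, one has $\psi_t \circ T_0 = V_0 \psi_t = \psi_t$ a.e.\ for every $t$. Once you use this at the outset, your identity $\psi_t = \psi_0 \cdot (\psi_t \circ T_0)$ becomes simply $\psi_t = \psi_0 \cdot \psi_t$ a.e.\ for all $t \in \Ri$; letting $t \to 0$ in $L_1$ and using that multiplication by the fixed function $\psi_0 \in L_\infty$ is continuous on $L_1$ gives $\one = \psi_0$ directly. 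All of your discussion about extending $V_0$ to $L_1$, non-singularity of $T_0$, and a.e.\ subsequences then evaporates.

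The paper takes a different but equally short route: it first shows $\psi_0 \neq 0$ a.e.\ (if $\psi_0|_B = 0$ then the cocycle identity $\psi_t = \psi_0 \cdot (\psi_t \circ T_0)$ forces $\psi_t|_B = 0$ for every $t$, contradicting $\psi_t \to \one$ in $L_1$ unless $\mu(B)=0$), and then uses the algebraic relation $\psi_0 = \psi_0 \cdot (\psi_0 \circ T_0) = \psi_0^2$ to conclude $\psi_0 = \one$. Your limiting argument is a genuine alternative: it bypasses the separate ``$\psi_0 \neq 0$ a.e.'' step at the cost of one $L_1$-limit. Both proofs implicitly use $V_0 = I$ to replace $\psi_0 \circ T_0$ (resp.\ $\psi_t \circ T_0$) by $\psi_0$ (resp.\ $\psi_t$); the paper's version is purely algebraic after that, while yours is analytic.
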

\begin{proof}
Let $B \in \cb$ and suppose that $\psi_0|_B = 0$ a.e.
Then $\psi_t|_B = 0$ a.e.\ by (\ref{eSkoop401;20}).
Since $\lim_{t \to 0} \|\psi_t - \one\|_1 = 0$, one deduces that $\mu(B) = 0$.
So $\psi_0 \neq 0$ a.e.
In addition, (\ref{eSkoop401;20}) gives $\psi_0 = \psi_{0+0} = \psi_0^2$.
Hence $\psi_0 = \one$ a.e.
\end{proof}

The cocycle $\psi$ is called a $C_0$-{\bf cocycle (over $V$)} if 
$U$ is a $C_0$-group on $L_2(X)$.
If $\theta\in L_\infty(X)$ is such that
$\theta \neq 0$ a.e., and $\frac{1}{\theta} \in L_\infty(X)$, then 
it is easy to verify that $t \mapsto \frac{\theta\circ T_t}{\theta}$ is a 
cocycle.
A cocycle $\psi$ is called a {\bf coboundary} if there exists a $\theta\in L_\infty(X)$
such that
$\theta \neq 0$ a.e., $\frac{1}{\theta} \in L_\infty(X)$ and 
\[
\psi_t=\frac{\theta\circ T_t}{\theta}
\] 
for all $t\in\Ri$.
The function $\theta$ is called a {\bf transfer function} of the coboundary.
If, in addition, $\theta\in D(B)$ and $B \theta \in L_\infty(X)$, then $\psi$ is called a 
{\bf coboundary with an $L_\infty$-differentiable transfer function}.

If $\psi$ is a cocycle and $\zeta \in L_2(X)$, then $\zeta$ is called the
{\bf derivative} of $\psi$ if
$\lim_{t\to0} \frac{1}{t} (\psi_t - \one) = \zeta$ in $L_2(X)$.
We say that a cocycle $\psi$ is {\bf differentiable} if there exists an $\zeta \in L_2(X)$
such that $\zeta$ is the derivative of $\psi$.

We start with a characterisation of $C_0$-cocycles.

\begin{prop} \label{pkoop400.5}
Let $\psi \colon \Ri\to L_\infty(X)$ be a cocycle over $V$.
Then the following are equivalent.
\begin{tabeleq}
\item \label{pkoop400.5-1}
$\psi$ is a $C_0$-cocycle.
\item \label{pkoop400.5-2}
$\lim_{t \to 0} \|\psi_t - \one\|_2 = 0$.
\end{tabeleq}
\end{prop}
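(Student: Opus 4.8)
The implication \ref{pkoop400.5-1}$\Rightarrow$\ref{pkoop400.5-2} is immediate: if $\psi$ is a $C_0$-cocycle then $U$ is a $C_0$-group, so $\lim_{t\to0}U_t\one = U_0\one = \one$ in $L_2$, and $U_t\one = \psi_t\cdot(\one\circ T_t) = \psi_t$, giving exactly \ref{pkoop400.5-2}. For the reverse direction, the plan is to show that the hypothesis $\lim_{t\to0}\|\psi_t-\one\|_2 = 0$ forces $U$ to be strongly continuous at $0$ (hence everywhere, by the group property), which is the definition of $C_0$-cocycle. First I would record that $\psi_0 = \one$ a.e.\ and $U_0 = I$: the argument of Lemma~\ref{lkoop400.3} applies verbatim, since $L_2$-convergence gives a null sequence along which $\psi_{t_n}\to\one$ a.e., so any $B$ with $\psi_0|_B = 0$ a.e.\ must be null by \eqref{eSkoop401;20}, and then $\psi_0 = \psi_0^2$ forces $\psi_0 = \one$.

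Next I would establish a \emph{local bound} on $\|\psi_t\|_\infty$ for $t$ near $0$. From the cocycle identity \eqref{eSkoop401;20} and non-singularity of the $T_t$ one gets, for $|t|,|s|,|t+s|$ small, relations of the form $\psi_{t+s} = \psi_t\cdot(\psi_s\circ T_t)$; combined with $\psi_{-t}\circ T_t = 1/\psi_t$ (which follows from \eqref{eSkoop401;20} with $t' = -t$ once we know $\psi_0 = \one$), this shows $\psi_t\neq0$ a.e. The key point is to control $\sup_{|t|\le\delta}\|\psi_t\|_\infty$. Since $U_t = \psi_t V_t$ with $\|U_t\|_{2\to2}\le\|\psi_t\|_\infty\|V_t\|_{2\to2}$ and $V$ is a $C_0$-group (so $\sup_{|t|\le1}\|V_t\|_{2\to2} = M_V <\infty$), it would suffice to bound $\|U_t\|_{2\to2}$ locally. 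For this I would use the group relation $U_t = U_{t/n}^{\,n}$ together with an $L_2$-smallness estimate on $U_{t/n} - I$ derived from $\|\psi_{t/n}-\one\|_2\to0$ and strong continuity of $V$; a Neumann-type / telescoping argument $\|U_t f - f\|_2 \le \sum_{k=0}^{n-1}\|U_{t/n}^{k}(U_{t/n}-I)f\|_2$ then gives a uniform local operator bound by a standard iteration, much as in the proof of Theorem~\ref{tkoop206} or Lemma~\ref{lkoop305}\ref{lkoop305-2}. \emph{This local boundedness is the main obstacle}, since a priori the $L_\infty$-norms of $\psi_t$ could blow up as $t\to0$ even though the $L_2$-norms of $\psi_t-\one$ go to $0$; one has to exploit the cocycle structure and the $C_0$-property of $V$ to rule this out.

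Once $C := \sup_{|t|\le\delta}\|\psi_t\|_\infty < \infty$ is in hand, strong continuity at $0$ follows by a density argument in the style of Lemma~\ref{lkoop305}. For $f\in L_\infty(X)$ and $|t|\le\delta$, write
\[
U_t f - f = \psi_t\cdot(f\circ T_t) - f
= \psi_t\cdot\big((f\circ T_t) - f\big) + (\psi_t - \one)\cdot f,
\]
so that
\[
\|U_t f - f\|_2 \le \|\psi_t\|_\infty\,\|V_t f - f\|_2 + \|\psi_t-\one\|_2\,\|f\|_\infty
\le C\,\|V_t f - f\|_2 + \|\psi_t-\one\|_2\,\|f\|_\infty,
\]
and both terms tend to $0$ as $t\to0$ (the first by strong continuity of $V$, the second by hypothesis \ref{pkoop400.5-2}). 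Since $L_\infty(X)$ is dense in $L_2(X)$ and $\{U_t : |t|\le\delta\}$ is norm-bounded, the usual $\varepsilon/3$ argument extends $\lim_{t\to0}U_t f = f$ to all $f\in L_2(X)$. Finally, the group property $U_{t+s} = U_tU_s$ (already established above the proposition) promotes continuity at $0$ to strong continuity on all of $\Ri$, so $U$ is a $C_0$-group and $\psi$ is a $C_0$-cocycle.
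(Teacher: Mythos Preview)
Your argument has a genuine gap in the second paragraph. The ``Neumann-type / telescoping'' route you sketch for the local bound $\sup_{|t|\le\delta}\|\psi_t\|_\infty<\infty$ is circular: the telescoping estimate $\|U_t f - f\|_2 \le \sum_{k=0}^{n-1}\|U_{t/n}^{\,k}(U_{t/n}-I)f\|_2$ requires control of $\|U_{t/n}^{\,k}\|_{2\to2}$, which is exactly what you are trying to obtain, and there is no operator-norm smallness $\|U_s-I\|_{2\to2}<1$ available (you only have $\|\psi_s-\one\|_2\to0$, which is much weaker). You rightly flag this as the main obstacle, and you even name Theorem~\ref{tkoop206} as a model; but the mechanism you actually write down does not work, and the measurability hypothesis needed for the argument of Theorem~\ref{tkoop206} has not yet been verified at this stage of your proof.

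The paper resolves the difficulty by reversing the order of your last two steps and using a slightly different decomposition. For $f\in L_\infty$ write
\[
U_t f - f = (\psi_t-\one)\,V_t f + (V_t f - f),
\]
so that $\|U_t f - f\|_2 \le \|\psi_t-\one\|_2\,\|V_t f\|_\infty + \|V_t f - f\|_2 \le \|f\|_\infty\,\|\psi_t-\one\|_2 + \|V_t f - f\|_2$, which tends to $0$ \emph{without} any a~priori bound on $\|\psi_t\|_\infty$. (The point is that $\|V_t f\|_\infty\le\|f\|_\infty$ by non-singularity of $T_t$, whereas your decomposition $U_tf-f=\psi_t(V_tf-f)+(\psi_t-\one)f$ produces a factor $\|\psi_t\|_\infty$ you cannot yet control.) Together with the pointwise-in-$t$ bound $\|U_{t_0}\|_{2\to2}\le\|\psi_{t_0}\|_\infty\|V_{t_0}\|_{2\to2}<\infty$, this makes $t\mapsto U_t f$ continuous from $\Ri$ into $L_2$ for every $f\in L_\infty$. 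Only now is $t\mapsto\|\widetilde U_t f\|_\infty=\sup\{|(U_t f,g)|:g\in L_2,\ \|g\|_1\le1\}$ lower semicontinuous, so that the Baire-type argument from the proof of Theorem~\ref{tkoop206} applies and yields $\sup_{|t|\le1}\|\widetilde U_t\|_{\infty\to\infty}<\infty$, hence $\sup_{|t|\le1}\|\psi_t\|_\infty<\infty$. With this bound in hand, the density step proceeds exactly as in your final paragraph.
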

\begin{proof}
Let $U$ be the one-parameter group associated with $\psi$.
Since $\psi_t = U_t \one$ for all $t \in \Ri$, the implication 
\ref{pkoop400.5-1}$\Rightarrow$\ref{pkoop400.5-2} is trivial.
So it remains to prove the converse.

Because $\lim_{t \to 0} \|\psi_t - \one\|_1 = 0$ by \ref{pkoop400.5-2}, 
it follows from Lemma~\ref{lkoop400.3} that $\psi_0 = \one$ a.e.
Clearly $\|U_t\|_{2\to2}\leq\|\psi_t\|_\infty\|V_t\|_{2\to2}$ for all $t \in \Ri$.
If $f\in L_\infty$ then
$\|U_t f\|_\infty\leq \|\psi_t\|_\infty\|f\|_\infty < \infty$.
Hence the operator $\widetilde U_t:=U_t|_{L_\infty} \colon L_\infty\to L_\infty$ 
is bounded.
Obviously $(\widetilde U_t)_{t\in\Ri}$ is a one-parameter
group on $L_\infty$.

Let $f\in L_\infty$.
Then
\[
U_tf-f
=(\psi_t-\one)V_tf+V_tf-f
\]
for all $t\in\Ri$, so
\[
\|U_tf-f\|_2
\leq \|\psi_t-\one\|_2 \|V_tf\|_\infty + \|V_tf-f\|_2
= \|f\|_\infty\|\psi_t-\one\|_2+\|V_tf-f\|_2
\]
and therefore 
\begin{equation} \label{epkoop401;1}
\lim_{t\to 0} \|U_tf-f\|_2 = 0
\end{equation}
by assumption.
Fix $t_0\in\Ri$.
Then
\[
\|U_{t_0+t}f-U_{t_0}f\|_2\leq\|U_{t_0}\|_{2\to2}\|U_tf-f\|_2
\]
for all $t\in\Ri$.
Hence, $\lim_{t\to t_0} U_tf=U_{t_0}f$ in $L_2$.
So $t \mapsto U_t f$ is continuous from $\Ri$ into $L_2$.
Hence the map $t \mapsto |(U_tf,g)|$ from $\Ri$ into $\Ri$ is continuous
for all $g \in L_2$.

Let $f\in L_\infty$ and $t\in\Ri$.
Then
\begin{eqnarray*}
\|\widetilde U_tf\|_\infty
& =& \sup\{|\langle \widetilde U_tf,g\rangle| : g\in L_1 \mbox{ and } \|g\|_1\leq1\}  \\
& = & \sup\{|\langle \widetilde U_tf,g\rangle|: g\in L_2 \mbox{ and } \|g\|_1\leq1\}  \\
& =& \sup\{|( U_tf,g)| : g\in L_2 \mbox{ and } \|g\|_1\leq1\}.
\end{eqnarray*}
Since the map $t\mapsto |(U_tf,g)|$ is continuous for each $g\in L_2$, 
it follows that the map $t\mapsto \|\widetilde U_tf\|_\infty$
is lower semicontinuous, hence it is measurable on $\Ri$.
By the proof of Theorem~\ref{tkoop206}, we deduce that the set 
$\{\widetilde U_t : t\in[2,3]\}$ is
bounded in $\cl(L_\infty)$.
Since $(\widetilde U_t)_{t\in\Ri}$ is a one-parameter group on $L_\infty$, also
$\{\widetilde U_t : t \in [-1,1]\}$ is bounded in $\cl(L_\infty)$.
Let  $c:=\sup\{\|\widetilde U_t\|_{\infty\to\infty} : t \in [-1,1] \} $.
Then $\|\psi_t\|_\infty = \|\widetilde U_t\one\|_\infty \leq c$ for all $t\in[-1,1]$.
Hence $\sup \{ \|U_t\| : t \in [-1,1] \} < \infty$.
Since $L_\infty$ is dense in $L_2$, it follows from (\ref{epkoop401;1}) that 
$U$ is a $C_0$-group.
\end{proof}

\begin{cor} \label{ckoop400.7}
Every differentiable cocycle is a $C_0$-cocycle.
Every coboundary is a $C_0$-cocycle.
\end{cor}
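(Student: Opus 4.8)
The plan is to deduce both assertions directly from Proposition~\ref{pkoop400.5}, i.e.\ to show in each case that $\lim_{t \to 0} \|\psi_t - \one\|_2 = 0$. For a differentiable cocycle this is immediate: if $\zeta \in L_2(X)$ is the derivative of $\psi$, then $\lim_{t \to 0} \tfrac{1}{t}(\psi_t - \one) = \zeta$ in $L_2(X)$, so $\|\psi_t - \one\|_2 = |t| \, \|\tfrac{1}{t}(\psi_t - \one)\|_2 \to 0 \cdot \|\zeta\|_2 = 0$ as $t \to 0$. Hence $\psi$ is a $C_0$-cocycle by Proposition~\ref{pkoop400.5}.

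For a coboundary $\psi_t = \tfrac{\theta \circ T_t}{\theta}$ with $\theta, \tfrac{1}{\theta} \in L_\infty(X)$, the idea is to write
\[
\psi_t - \one = \frac{\theta \circ T_t - \theta}{\theta} = \frac{V_t \theta - \theta}{\theta}
\]
and use that $\tfrac{1}{\theta} \in L_\infty(X)$ together with the strong continuity of $V$ on $L_2(X)$. Indeed, since $\theta \in L_\infty(X) \subset L_2(X)$ and $V$ is a $C_0$-group on $L_2(X)$, one has $\lim_{t \to 0} \|V_t \theta - \theta\|_2 = 0$, and therefore
\[
\|\psi_t - \one\|_2 = \Big\| \frac{1}{\theta} (V_t \theta - \theta) \Big\|_2 \leq \Big\| \frac{1}{\theta} \Big\|_\infty \, \|V_t \theta - \theta\|_2 \to 0
\]
as $t \to 0$. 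Again Proposition~\ref{pkoop400.5} gives that $\psi$ is a $C_0$-cocycle.

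There is essentially no obstacle here; the only point requiring a moment's care is that a coboundary is indeed a cocycle in the sense of~(\ref{eSkoop401;20}), but this was already observed in the text preceding the definition of coboundary. Both parts are then one-line consequences of the characterisation in Proposition~\ref{pkoop400.5}.
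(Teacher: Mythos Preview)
Your proposal is correct and is exactly the argument the paper has in mind: the corollary is stated without proof precisely because both claims are immediate from Proposition~\ref{pkoop400.5} in the way you describe.
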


\begin{prop} \label{pkoop401}
Let $\zeta\in L_2(X)$.
Then there exists at most one function $\psi \colon \Ri\to L_\infty(X)$ 
such that $\psi$ is a cocycle over $V$ and the cocycle $\psi$ is differentiable 
with derivative $\zeta$.
\end{prop}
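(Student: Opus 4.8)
The plan is to show uniqueness by reducing to a statement about the associated $C_0$-groups. Suppose $\psi$ and $\psi'$ are both cocycles over $V$ with derivative $\zeta$. By Corollary~\ref{ckoop400.7} both are $C_0$-cocycles, so the associated one-parameter groups $U = (\psi_t V_t)_{t \in \Ri}$ and $U' = (\psi'_t V_t)_{t \in \Ri}$ are both $C_0$-groups on $L_2(X)$, and both leave $L_\infty(X)$ invariant. Denote their generators by $A$ and $A'$. Since $\psi_t = U_t \one$ and $\psi'_t = U'_t \one$, and the derivative hypothesis says $\lim_{t \to 0} \frac{1}{t}(\psi_t - \one) = \zeta = \lim_{t \to 0} \frac{1}{t}(\psi'_t - \one)$ in $L_2(X)$, we get immediately that $\one \in D(A) \cap D(A')$ with $A\one = A'\one = \zeta$.

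First I would identify the generators $A$ and $A'$ on a common core. The key observation is that $V_t f = f \circ T_t$ with $V$ a $C_0$-group, and $U_t = \psi_t V_t$ is a bounded multiplicative perturbation of $V_t$ in the following sense: formally $A f = B f + \zeta f$ for $f$ in a suitable domain, where $B$ is the generator of $V$. To make this rigorous I would argue as in the proof of Lemma~\ref{lkoop308} (which treats exactly the relation $Bf = Af - f \cdot A\one$ but in the reverse direction): for $f \in D(B) \cap L_\infty(X)$ one writes
\[
\tfrac{1}{t}(U_t f - f) = \psi_t \cdot \tfrac{1}{t}(V_t f - f) + \tfrac{1}{t}(\psi_t - \one) \cdot f,
\]
and since $\psi_t \to \one$ in $L_2$ with $\|\psi_t\|_\infty$ bounded near $t=0$ (from the $C_0$-cocycle property, cf.\ the proof of Proposition~\ref{pkoop400.5}), while $\tfrac{1}{t}(V_t f - f) \to Bf$ in $L_2$ and $\tfrac{1}{t}(\psi_t - \one) \to \zeta$ in $L_2$, one gets $f \in D(A)$ and $Af = Bf + \zeta f$; the product $\psi_t \cdot \frac1t(V_t f - f)$ converges in $L_2$ because $\frac1t(V_t f - f)$ is $L_2$-convergent while multiplication by the uniformly-bounded $\psi_t$ converging to $\one$ in $L_2$ handles the cross term after a standard $\varepsilon$-argument (splitting into $(\psi_t - \one)\cdot Bf$, which tends to $0$ since $Bf \in L_\infty$ forces $\langle \psi_t - \one, \overline{Bf}\,h\rangle \to 0$, plus $\psi_t \cdot (\frac1t(V_t f - f) - Bf)$). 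The same computation applies verbatim to $U'$, giving $A' f = Bf + \zeta f$ for all $f \in D(B) \cap L_\infty(X)$. Hence $A$ and $A'$ agree on $D(B) \cap L_\infty(X)$.

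The remaining point is that $D(B) \cap L_\infty(X)$ is a core for both $A$ and $A'$, so that $A = A'$ and therefore $U_t = U'_t$ for all $t$, whence $\psi_t = U_t \one = U'_t \one = \psi'_t$. For this I would invoke that $D(B) \cap L_\infty(X)$ is dense in $L_2(X)$ and invariant under $V$, hence a core for $B$ (Lemma~\ref{lkoop205} and Remark~\ref{rkoop206}, applicable since $V$ contracts $L_\infty$). Since $A = B + \zeta I$ is a bounded multiplicative perturbation in the sense above, and the perturbing operator $f \mapsto \zeta f$ is only densely defined (multiplication by an $L_2$ function), I expect the main obstacle to be justifying that a $V$-core consisting of bounded functions remains a core for $A$. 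The cleanest route is probably not abstract perturbation theory but a direct argument: show $D(B) \cap L_\infty(X)$ is $U$-invariant — for $f \in D(B)\cap L_\infty$ and $t \in \Ri$, $U_t f = \psi_t \cdot V_t f = \psi_t \cdot (f \circ T_t)$, and one checks $V_t f = f \circ T_t \in D(B) \cap L_\infty$ while $\psi_t \in L_\infty$; the product of a $D(B)\cap L_\infty$ element with an $L_\infty$ element need not lie in $D(B)$, however, so instead I would use that the $U$-invariant dense subspace one really wants is $\{\,\int \varphi(t) U_t f\,dt : f \in L_2 \cap L_\infty,\ \varphi \in C_c^\infty(\Ri)\,\}$ exactly as in Lemma~\ref{lkoop205}, noting this sits inside $D(A)$, inside $L_\infty$ (by the bound $\|U_t\|_{\infty\to\infty} \le c\,e^{\omega|t|}$ from the $C_0$-cocycle estimate), and inside $D(B)$ by the same mollification applied to $V$ together with the relation $A = B + \zeta I$ on it; being $U$-invariant and dense it is a core for $A$, and symmetrically for $A'$, and on it $A$ and $A'$ coincide. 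Hence $A = A'$ and $\psi = \psi'$.
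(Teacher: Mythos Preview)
Your approach via generator equality is genuinely different from the paper's, and considerably heavier. The paper argues directly with the quotient: set $\eta_t := \widetilde\psi_t/\psi_t$ (using $\frac{1}{\psi_t}=\psi_{-t}\circ T_t\in L_\infty$), observe that $\eta$ is again a cocycle over $V$, compute that it is differentiable with derivative $0$, and then differentiate $t\mapsto\eta_t$ in $L_2$ using the cocycle identity $\eta_{t+h}=\eta_t\cdot(\eta_h\circ T_t)$ to see that $\eta'\equiv 0$, hence $\eta\equiv\one$. No generators, cores, or perturbation theory are needed.

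Your argument has two issues. First, a small one: in Step~2 you write ``since $Bf\in L_\infty$'', but $f\in D(B)\cap L_\infty$ does not force $Bf\in L_\infty$. The conclusion $(\psi_t-\one)\,Bf\to 0$ in $L_2$ is nevertheless true: approximate $Bf\in L_2$ by $g\in L_\infty$ and use $\|(\psi_t-\one)Bf\|_2\le \|\psi_t-\one\|_\infty\|Bf-g\|_2+\|g\|_\infty\|\psi_t-\one\|_2$ together with the uniform bound on $\|\psi_t\|_\infty$.

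The more serious gap is in Step~3. You need a \emph{common} core for $A$ and $A'$ on which they agree. Your mollification space $\mathcal D=\{\int\varphi(t)U_tf\,dt\}$ is built from $U$, so it is a core for $A$; the ``symmetric'' construction yields a different space $\mathcal D'$ built from $U'$, and you have not argued that either one is a core for the \emph{other} generator. Your claim that $\mathcal D\subset D(B)$ ``by the same mollification applied to $V$ together with the relation $A=B+\zeta I$'' is circular: that relation was only established on $D(B)\cap L_\infty$, so it cannot be used to place $\mathcal D$ inside $D(B)$. What actually closes the loop is the full strength of Lemma~\ref{lkoop308} (both inclusions), which gives $D(A)\cap L_\infty = D(B)\cap L_\infty = D(A')\cap L_\infty$. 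This common space is $U$-invariant (hence a core for $A$ by Remark~\ref{rkoop206}) and $U'$-invariant (hence a core for $A'$), and on it $A=B+\zeta I=A'$; then $A=A'$ follows. With this fix your route works, but the paper's quotient-cocycle argument reaches the conclusion in a few lines without any of this machinery.
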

\begin{proof}
Let $\psi,\widetilde \psi \colon \Ri\to L_\infty$ be 
cocycles over $V$ which are differentiable with derivative~$\zeta$.
Then $\psi_0 = \one = \widetilde \psi_0$ a.e.
Let $U$ be the group associated with $\psi$.
Then $U$ is a $C_0$-group by Proposition~\ref{pkoop400.5}.
Moreover, $\sup \{ \|\psi_t\|_\infty : t \in [-1,1] \} < \infty$
by (\ref{etkoop206;10}) in Theorem~\ref{tkoop206}, or the proof of 
Proposition~\ref{pkoop400.5}.
In addition, $\psi_t \neq 0$ a.e.\ and $\frac{1}{\psi_t} = \psi_{-t} \circ T_t$
for all $t \in \Ri$ by (\ref{eSkoop3;3}).
Define $\eta \colon \Ri \to L_\infty$ by
$\eta(t) = \eta_t:= \frac{\widetilde{\psi}_t}{\psi_t}$.
Then $\eta_{t+t'}=\eta_t\cdot(\eta_{t'}\circ T_t)$ for all $t,t'\in\Ri$,
so $\eta$ is a cocycle over $V$.
Moreover,
\begin{eqnarray*}
\frac{1}{t}\left(\eta_t-\one\right)
& = & \frac{1}{t}
\frac{\widetilde{\psi}_t-\psi_t}{\psi_t}  
= \frac{1}{\psi_t}\left(\frac{\widetilde{\psi}_t-\one}t-\frac{\psi_t -\one}{t}\right)
=(\psi_{-t}\circ T_t)\left(\frac{\widetilde{\psi}_t-\one}{t}-\frac{\psi_t-\one}{t}\right)
\end{eqnarray*}
for all $t \in \Ri \setminus \{ 0 \} $.
Since $\sup \{ \|\psi_{-t}\|_\infty : t \in [-1,1] \} < \infty$, one deduces that
the cocycle $\eta$ is differentiable and $\lim_{t\to0}\frac{1}{t}(\eta_t-\one)=0$ in $L_2$.

Let $t\in\Ri$ and $h\in\Ri\setminus\{0\}$.
Then
\begin{eqnarray*}
\frac{1}{h}(\eta_{t+h}-\eta_t)
& = & \frac{1}{h}(\eta_t\cdot(\eta_h\circ T_t)-\eta_t)  \\
& = & \eta_t\cdot\left(\frac{\eta_h-\one}h\circ T_t\right)
= \eta_t\cdot V_t\left(\frac{\eta_h-\one}h\right).
\end{eqnarray*}
It follows that $\lim_{h\to 0}\frac{1}{h}(\eta_{t+h}-\eta_t)=0$ in $L_2$.
Therefore $\eta$ is differentiable from $\Ri$ into $L_2$ and $\eta'(t)=0$
for all $t\in\Ri$.
So $\eta$ is constant and $\eta(t)=\eta(0)=\one$
for all $t\in\Ri$.
Hence $\widetilde{\psi}_t=\psi_t$ for all $t\in\Ri$, which completes the proof.
\end{proof}

\begin{lemma}\label{lkoop404}
Let $\zeta\in L_\infty(X)$.
Define $\psi \colon \Ri \to L_\infty(X)$ by $\psi_t :=e^{\int_0^t\zeta\circ T_s\,ds}$.
Then $\psi$ is a differentiable cocycle with derivative $\zeta$.
\end{lemma}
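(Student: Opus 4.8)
The plan is to write $\psi_t = e^{g_t}$ where $g_t := \int_0^t V_s \zeta \, ds$ is an $L_2(X)$-valued (Bochner) integral, and to verify in turn that $g_t \in L_\infty(X)$, that $\psi$ is a cocycle over $V$, and that $\psi$ has derivative $\zeta$. I would begin by recording elementary properties of $V$: since each $V_t$ is bounded on $L_2(X)$ and $V_t \one_B = \one_{T_t^{-1} B}$, the relation $\mu(B) = 0$ forces $\mu(T_t^{-1} B) = 0$, so $T_t$ is non-singular; hence $V_t$ is a positive operator, $V_t \one = \one$, and $\|V_t f\|_\infty \le \|f\|_\infty$ for all $f \in L_\infty(X)$. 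As $V$ is a $C_0$-group, $s \mapsto V_s \zeta$ is continuous from $\Ri$ into $L_2(X)$ and bounded in $L_\infty(X)$-norm by $\|\zeta\|_\infty$, so $g_t \in L_2(X)$ is well defined; moreover $\tfrac{1}{t} g_t$ is an $L_2(X)$-limit of Riemann sums and so lies in the closed convex hull in $L_2(X)$ of $\{ V_s \zeta : s \text{ between } 0 \text{ and } t \}$, which is contained in the $L_\infty(X)$-ball of radius $\|\zeta\|_\infty$ --- a closed convex subset of $L_2(X)$. Therefore $g_t \in L_\infty(X)$ with $\|g_t\|_\infty \le |t| \, \|\zeta\|_\infty$, whence $\psi_t = e^{g_t} \in L_\infty(X)$, $\psi_t \ne 0$ a.e., and $\tfrac{1}{\psi_t} = e^{-g_t} \in L_\infty(X)$.

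Next I would prove the additive cocycle identity $g_{t+t'} = g_t + g_{t'} \circ T_t$. Splitting $g_{t+t'} = \int_0^t V_s \zeta \, ds + \int_t^{t+t'} V_s \zeta \, ds$ and substituting $s = t + r$ in the second integral, I can pull the bounded operator $V_t$ through the Bochner integral and use the group law $V_{t+r} = V_t V_r$ to get $\int_t^{t+t'} V_s \zeta \, ds = \int_0^{t'} V_{t+r} \zeta \, dr = V_t \int_0^{t'} V_r \zeta \, dr = V_t g_{t'} = g_{t'} \circ T_t$. Exponentiating and using $e^{g_{t'} \circ T_t} = (e^{g_{t'}}) \circ T_t$ yields $\psi_{t+t'} = e^{g_t} \cdot \big( (e^{g_{t'}}) \circ T_t \big) = \psi_t \cdot (\psi_{t'} \circ T_t)$, i.e.\ (\ref{eSkoop401;20}), so $\psi$ is a cocycle over $V$.

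Finally, for the derivative I would use the pointwise inequality $|e^z - 1 - z| \le \tfrac{1}{2} |z|^2 e^{|z|}$, valid for all $z \in \Ci$, applied with $z = g_t$, to obtain a.e.
\[
\Big| \tfrac{1}{t} (\psi_t - \one) - \zeta \Big|
\le \tfrac{1}{2|t|} |g_t|^2 \, e^{|g_t|} + \Big| \tfrac{1}{t} g_t - \zeta \Big|
\le \tfrac{|t|}{2} \, \|\zeta\|_\infty^2 \, e^{|t| \, \|\zeta\|_\infty} \, \one
  + \Big| \tfrac{1}{t} \int_0^t (V_s \zeta - \zeta) \, ds \Big| .
\]
The first term on the right tends to $0$ uniformly, hence in $L_2(X)$ since $\mu(X) = 1 < \infty$; the second tends to $0$ in $L_2(X)$ because $\big\| \tfrac{1}{t} \int_0^t (V_s \zeta - \zeta) \, ds \big\|_2 \le \sup_{|s| \le |t|} \|V_s \zeta - \zeta\|_2 \to 0$ by strong continuity of $V$ on $L_2(X)$. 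Thus $\lim_{t \to 0} \tfrac{1}{t} (\psi_t - \one) = \zeta$ in $L_2(X)$, so $\psi$ is differentiable with derivative $\zeta$ (and in particular a $C_0$-cocycle by Corollary~\ref{ckoop400.7}).

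I expect the only genuine subtlety to be the $L_\infty$-membership and bound for $g_t$: one should work with the $L_2$-valued integral together with the closedness and convexity of the $L_\infty(X)$-ball in $L_2(X)$, rather than attempting to integrate $s \mapsto V_s \zeta$ directly in the non-separable space $L_\infty(X)$, and one must keep track of the order of composition when invoking the group law $V_{t+r} = V_t V_r$. Everything else is a routine combination of the elementary exponential inequality with the strong continuity of $V$ and the finiteness of $\mu$.
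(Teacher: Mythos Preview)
Your proof is correct and follows essentially the same approach as the paper: you verify the additive cocycle identity for $g_t=\int_0^t V_s\zeta\,ds$ and exponentiate, then use the elementary inequality $|e^z-1-z|\le |z|^2 e^{|z|}$ together with the strong continuity of $V$ on $L_2(X)$ to obtain the derivative. The only difference is that you supply more detail on why $g_t\in L_\infty(X)$ with $\|g_t\|_\infty\le |t|\,\|\zeta\|_\infty$, which the paper treats as implicit.
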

\begin{proof}
We first show that $\psi$ is a cocycle over $V$.
Let $t,t' \in \Ri$.
Then 
\begin{eqnarray*}
\int_0^{t+t'} \zeta \circ T_s \, ds
& = & \int_0^t \zeta \circ T_s \, ds + \int_t^{t+t'} \zeta \circ T_s \, ds  \\
& = & \int_0^t \zeta \circ T_s \, ds + \int_0^{t'} \zeta \circ T_{s+t} \, ds  \\
& = & \int_0^t \zeta \circ T_s \, ds + \Big( \int_0^{t'} \zeta \circ T_s \, ds \Big) \circ T_t .
\end{eqnarray*}
Hence $\psi$ is a cocycle over $V$.

Next we show that $\psi$ is differentiable.
Recall that $|e^z-1-z| \leq |z|^2e^{|z|}$
for all $z\in\Ci$.
Let $t \in [-1,1] \setminus \{ 0 \} $.
Then 
\begin{eqnarray*}
\left|\frac{\psi_t-\one}{t}-\zeta\right|
& \leq &
\left|\frac{\psi_t-\one - \int_0^t\zeta\circ T_s\,ds}{t} \right|
 + \left|\frac{1}{t} \int_0^t\zeta\circ T_s\,ds - \zeta\right|  \\
& \leq & \frac{1}{|t|} \left(\int_0^t\|\zeta\|_\infty\right)^2 \, e^{|t| \, \|\zeta\|_\infty}
   + \Big| \frac{1}{t} \int_0^t |\zeta\circ T_s-\zeta| \, ds \Big| \\
& \leq & |t| \, \|\zeta\|_\infty^2 \, e^{\|\zeta\|_\infty}
   + \Big| \frac{1}{t} \int_0^t |V_s \zeta - \zeta| \, ds \Big|
.
\end{eqnarray*}
Therefore 
\[
\Big\| \frac{1}{t} (\psi_t-\one) - \zeta \Big\|_2
\leq |t| \, \|\zeta\|_\infty^2 \, e^{\|\zeta\|_\infty}
   + \Big| \frac{1}{t} \int_0^t \|V_s \zeta - \zeta\|_2 \, ds \Big|
.  \]
Since $s \mapsto \|V_s \zeta - \zeta\|_2$ is continuous,
one deduces that the cocycle $\psi$ is differentiable with derivative~$\zeta$.
\end{proof}

\begin{lemma}\label{lkoop405}
Let $\zeta\in L_\infty(X)$ and let $\psi \colon \Ri\to L_\infty(X)$ be a cocycle.
Then $\psi$ is differentiable with derivative $\zeta$
if and only if 
$\psi_t=e^{\int_0^t\zeta\circ T_s\,ds}$ for all $t\in\Ri$.
\end{lemma}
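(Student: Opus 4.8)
The plan is to prove the two implications separately, using Lemma~\ref{lkoop404} and Proposition~\ref{pkoop401} as the main tools. The ``if'' direction is already essentially contained in Lemma~\ref{lkoop404}: if $\psi_t = e^{\int_0^t \zeta \circ T_s \, ds}$ for all $t \in \Ri$, then by Lemma~\ref{lkoop404} the right-hand side defines a differentiable cocycle with derivative $\zeta$, so $\psi$ is such a cocycle. For this direction I should remark that the hypothesis that $\psi$ is a cocycle is compatible with (and in fact implied by) the stated formula, so nothing extra is needed.

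For the ``only if'' direction, suppose $\psi$ is a cocycle that is differentiable with derivative $\zeta \in L_\infty(X)$. Define $\widetilde\psi \colon \Ri \to L_\infty(X)$ by $\widetilde\psi_t := e^{\int_0^t \zeta \circ T_s \, ds}$. By Lemma~\ref{lkoop404}, $\widetilde\psi$ is a cocycle over $V$ which is differentiable with derivative $\zeta$. Thus both $\psi$ and $\widetilde\psi$ are cocycles over $V$ that are differentiable with the same derivative $\zeta$. By the uniqueness statement in Proposition~\ref{pkoop401}, there is at most one such cocycle, so $\psi = \widetilde\psi$, that is $\psi_t = e^{\int_0^t \zeta \circ T_s \, ds}$ for all $t \in \Ri$. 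This completes the proof.

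I do not anticipate a serious obstacle here, since the statement is a direct corollary of the two preceding lemmas/propositions; the only point requiring a small amount of care is checking that both hypotheses of Proposition~\ref{pkoop401} (being a cocycle over $V$, and being differentiable with derivative $\zeta$) are verified for $\widetilde\psi$, but both are exactly the content of Lemma~\ref{lkoop404}. If anything, the main thing to be careful about is simply to invoke the results in the right order and not to reprove differentiability by hand.
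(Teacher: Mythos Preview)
Your proposal is correct and follows exactly the paper's approach: the paper's proof simply states that the lemma follows immediately from Proposition~\ref{pkoop401} and Lemma~\ref{lkoop404}, which is precisely what you do.
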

\begin{proof}
This follows immediately from Proposition~\ref{pkoop401} and Lemma~\ref{lkoop404}.
\end{proof}

Next we turn to coboundaries.

\begin{lemma} \label{lkoop403}
Let $\psi$ be a coboundary with transfer function $\theta$.
\begin{tabel}
\item \label{lkoop403-1}
The coboundary $\psi$ is differentiable if and only if $\theta \in D(B)$.
Moreover, if $\psi$ is differentiable, then the derivative is $\frac{B \theta}{\theta}$.
\item \label{lkoop403-2}
If $\theta \in D(B)$ and $B \theta \in L_\infty(X)$, then
\[
\psi_t = e^{\int_0^t \frac{B \theta}{\theta} \circ T_s \, ds}
\]
for all $t \in \Ri$.
\end{tabel}
\end{lemma}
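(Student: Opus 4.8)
The plan is to reduce both parts to the defining limit for membership in $D(B)$, exploiting that multiplication by $\theta$ and multiplication by $\frac{1}{\theta}$ are mutually inverse bounded operators on $L_2(X)$.

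For part \ref{lkoop403-1}, I would write $V_t \theta = \theta \circ T_t$ (which makes sense since $\theta \in L_\infty(X) \subset L_2(X)$ as $\mu$ is finite) and start from the elementary identity
\[
\frac{1}{t}\,(\psi_t - \one) = \frac{1}{\theta} \cdot \frac{V_t \theta - \theta}{t}
\]
valid for all $t \in \Ri \setminus \{0\}$. Since $\frac{1}{\theta} \in L_\infty(X)$, multiplication by $\frac{1}{\theta}$ is a bounded operator on $L_2(X)$ whose inverse is multiplication by $\theta \in L_\infty(X)$, again bounded. Hence $\lim_{t\to 0}\frac{1}{t}(\psi_t - \one)$ exists in $L_2(X)$ if and only if $\lim_{t\to 0}\frac{1}{t}(V_t\theta - \theta)$ exists in $L_2(X)$, i.e.\ if and only if $\theta \in D(B)$; and in that case the derivative of $\psi$ equals $\frac{1}{\theta}\,B\theta = \frac{B\theta}{\theta}$, which lies in $L_2(X)$ because $B\theta \in L_2(X)$ and $\frac{1}{\theta} \in L_\infty(X)$. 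The only point requiring a word of care is that the two-sided limit defining differentiability of $\psi$ coincides with the (right-sided) limit defining $D(B)$, which is automatic for the $C_0$-group $V$.

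For part \ref{lkoop403-2}, under the extra hypothesis $B\theta \in L_\infty(X)$ the derivative $\frac{B\theta}{\theta}$ obtained in part \ref{lkoop403-1} lies in $L_\infty(X)$. Since any coboundary is a cocycle over $V$, and $\psi$ is differentiable with derivative $\zeta := \frac{B\theta}{\theta} \in L_\infty(X)$, Lemma~\ref{lkoop405} applies directly and yields $\psi_t = e^{\int_0^t \zeta \circ T_s \, ds} = e^{\int_0^t \frac{B\theta}{\theta}\circ T_s\,ds}$ for all $t \in \Ri$.

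There is essentially no obstacle here: the content is the boundedness and invertibility of the multiplication operators by $\theta$ and $\frac{1}{\theta}$, which lets existence of limits pass back and forth between $\psi$ and $\theta$, together with an invocation of Lemma~\ref{lkoop405} for the exponential formula.
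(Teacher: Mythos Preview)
Your argument is correct and is essentially the same as the paper's: the paper writes the identity $\frac{\psi_t-\one}{t}=\frac{1}{\theta}\cdot\frac{V_t\theta-\theta}{t}$ to settle part~\ref{lkoop403-1}, and then for part~\ref{lkoop403-2} observes $\zeta=\frac{B\theta}{\theta}\in L_\infty$ and invokes Lemma~\ref{lkoop405}. Your additional remarks on the bounded invertibility of the multiplication operators and the coincidence of one- and two-sided limits for $C_0$-groups are correct and just make explicit what the paper leaves implicit.
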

\begin{proof} 
If $t \in \Ri \setminus \{ 0 \} $, then
\[
\frac{\psi_t-\one}{t} 
= \frac{1}{\theta} \cdot\frac{\theta\circ T_t-\theta}{t}
= \frac{1}{\theta} \cdot\frac{V_t \theta - \theta}{t}
. \]
Hence $\psi$ is differentiable if and only if $\theta \in D(B)$.
Moreover, if $\psi$ is differentiable, then the derivative is $\frac{1}{\theta} \, B \theta$.
This proves Statement~\ref{lkoop403-1}.

If $\theta \in D(B)$ and $B \theta \in L_\infty$, then 
$\zeta = \frac{1}{\theta} \, B \theta \in L_\infty$.
Now Statement~\ref{lkoop403-2} follows from Lemma~\ref{lkoop405}.
\end{proof}

Note that Example~\ref{xkoop312}  yields a $C_0$-cocycle 
(in fact a coboundary) which is not differentiable.
It also gives an example of a coboundary which is not a 
coboundary with an $L_\infty$-differentiable transfer function.

\begin{lemma} \label{lkoop407} 
Let $\psi$ be a differentiable cocycle with derivative $\zeta\in L_\infty(X)$.
Then the following conditions are equivalent.
\begin{tabeleq}
\item \label{lkoop407-1} 
$\psi$ is a coboundary.
\item \label{lkoop407-2} 
$\psi$ is a coboundary with an $L_\infty$-differentiable transfer function.
\item \label{lkoop407-3} 
There exists a $\theta\in D(B)\cap L_\infty(X)$ such that $\theta\neq0$-a.e., 
$\frac{1}{\theta} \in L_\infty(X)$, $B \theta\in L_\infty(X)$ and
$\zeta=\frac{B \theta}{\theta}$.
\end{tabeleq}
\end{lemma}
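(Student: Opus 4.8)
The plan is to establish the cycle \ref{lkoop407-2}$\Rightarrow$\ref{lkoop407-1}$\Rightarrow$\ref{lkoop407-3}$\Rightarrow$\ref{lkoop407-2}. The implication \ref{lkoop407-2}$\Rightarrow$\ref{lkoop407-1} is immediate, since a coboundary with an $L_\infty$-differentiable transfer function is by definition a coboundary.

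For \ref{lkoop407-1}$\Rightarrow$\ref{lkoop407-3} the key point is that differentiability of $\psi$ forces any transfer function to lie in $D(B)$. Concretely, let $\theta \in L_\infty(X)$ be a transfer function of the coboundary $\psi$, so that $\theta \neq 0$ a.e., $\frac{1}{\theta} \in L_\infty(X)$ and $\psi_t = \frac{\theta \circ T_t}{\theta}$ for all $t \in \Ri$. Since $\psi$ is differentiable, Lemma~\ref{lkoop403}\ref{lkoop403-1} gives $\theta \in D(B)$ and identifies the derivative of $\psi$ as $\frac{B\theta}{\theta}$; as this derivative equals $\zeta$ by hypothesis, we obtain $B\theta = \zeta \, \theta$. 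Because $\zeta,\theta \in L_\infty(X)$ this product lies in $L_\infty(X)$, so $B\theta \in L_\infty(X)$ and $\zeta = \frac{B\theta}{\theta}$, which is exactly \ref{lkoop407-3} for this choice of $\theta$.

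For \ref{lkoop407-3}$\Rightarrow$\ref{lkoop407-2} the delicate point is that \ref{lkoop407-3} does not a priori present $\psi$ itself as a coboundary; one has to recover $\psi$ from its derivative. Given $\theta$ as in \ref{lkoop407-3}, put $\psi'_t := \frac{\theta \circ T_t}{\theta}$, which is a coboundary over $V$ with transfer function $\theta$. Since $\theta \in D(B)$, Lemma~\ref{lkoop403}\ref{lkoop403-1} shows that $\psi'$ is differentiable with derivative $\frac{B\theta}{\theta} = \zeta$. Thus $\psi$ and $\psi'$ are both cocycles over $V$ that are differentiable with derivative $\zeta$, so the uniqueness statement of Proposition~\ref{pkoop401} yields $\psi = \psi'$. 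Hence $\psi_t = \frac{\theta \circ T_t}{\theta}$ for all $t \in \Ri$, so $\theta$ is a transfer function of $\psi$, and since $\theta \in D(B) \cap L_\infty(X)$ with $B\theta \in L_\infty(X)$ it is an $L_\infty$-differentiable transfer function, which is \ref{lkoop407-2}. I expect this last implication, and specifically the appeal to uniqueness of differentiable cocycles, to be the only non-routine ingredient; the other two implications are direct consequences of Lemma~\ref{lkoop403}\ref{lkoop403-1} and the definitions.
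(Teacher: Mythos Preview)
Your proof is correct and follows essentially the same approach as the paper: both use Lemma~\ref{lkoop403}\ref{lkoop403-1} to pass from differentiability of $\psi$ to $\theta\in D(B)$ with $B\theta=\zeta\theta\in L_\infty$, and both invoke the uniqueness statement of Proposition~\ref{pkoop401} to identify $\psi$ with the coboundary built from~$\theta$. The only cosmetic difference is that the paper organizes the implications as \ref{lkoop407-2}$\Rightarrow$\ref{lkoop407-1}, \ref{lkoop407-1}$\Rightarrow$\ref{lkoop407-2} and \ref{lkoop407-1}$\Rightarrow$\ref{lkoop407-3} together, and \ref{lkoop407-3}$\Rightarrow$\ref{lkoop407-1}, whereas you close the cycle via \ref{lkoop407-3}$\Rightarrow$\ref{lkoop407-2}; the arguments are the same.
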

\begin{proof} 
The implication \ref{lkoop407-2}$\Rightarrow$\ref{lkoop407-1} is trivial.

`\ref{lkoop407-1}$\Rightarrow$\ref{lkoop407-2}' and `\ref{lkoop407-1}$\Rightarrow$\ref{lkoop407-3}'.
Let $\theta$ be a transfer function of $\psi$.
By definition $\theta \in L_\infty$, $\theta\neq0$-a.e.\ 
and $\frac{1}{\theta} \in L_\infty$.
Then Lemma~\ref{lkoop403}\ref{lkoop403-1} gives $\theta \in D(B)$ and 
$\zeta = \frac{B \theta}{\theta}$.
So $B \theta = \zeta \, \theta \in L_\infty$.

`\ref{lkoop407-3}$\Rightarrow$\ref{lkoop407-1}'.
Define $\widetilde \psi \colon \Ri \to L_\infty$ by 
$\widetilde \psi_t = \frac{\theta \circ T_t}{\theta}$.
Then Lemma~\ref{lkoop403}\ref{lkoop403-1} implies that the coboundary
$\widetilde \psi$ is differentiable with derivative $\zeta$.
By the uniqueness of Proposition~\ref{pkoop401} one deduces that $\psi = \widetilde \psi$.
So $\psi$ is a coboundary.
\end{proof}

We now give an example of a differentiable cocycle which is not a coboundary.

\begin{exam} \label{xkoop408}
Let $\Ti = \{ z \in \Ci : |z| = 1 \} $ be the torus with normalized Haar measure.
For all $t \in \Ri$ define $T_t \colon \Ti \to \Ti$ by $T_t z = e^{it} \, z$ and 
define $V_t \colon L_2(\Ti) \to L_2(\Ti)$ by $V_t f = f \circ T_t$.
Then $V = (V_t)_{t \in \Ri}$ is a $C_0$-group.
Fix $\zeta \in L_\infty(\Ti)$ with $\int \zeta \not\in i \Zi$.
Define $\psi \colon \Ri \to L_\infty(\Ti)$ by 
\[
\psi_t = e^{\int_0^t \zeta \circ T_s \, ds}
.  \]
Then $\psi$ is a differentiable cocycle by Lemma~\ref{lkoop404}.
Now suppose that $\psi$ is a coboundary.
Let $\theta$ be a transfer function.
Then $\psi_{2\pi} = \frac{\theta \circ T_{2\pi}}{\theta} = \one$.
Hence $\int_0^{2\pi} \zeta \circ T_s \, ds \in 2 \pi i \Zi$ a.e.
But $\int_0^{2\pi} \zeta \circ T_s \, ds = 2 \pi \int_\Ti \zeta$ a.e.
So $\int_\Ti \zeta \in i \Zi$.
This is a contradiction.
\end{exam}

\subsection*{Acknowledgements}
We wish to thank Yuri Tomilov for making a reference to \cite{ABHN} Lemma~3.16.4,
which then led to the proof of Theorem~\ref{tkoop206}.
In addition we wish to thank him for the reference to \cite{GalP}
and many critical comments
which improved the paper.
The authors are most grateful for the hospitality and 
fruitful stay of the first-named author at the University of Toru\'n
and the second-named author at the University of Auckland.
Part of this work is supported by an
NZ-EU IRSES counterpart fund and the Marsden Fund Council from Government funding,
administered by the Royal Society of New Zealand.
Part of this work is supported by the the NCN grant DEC-2011/03/B/ST1/00407 and 
the EU Marie Curie IRSES program, project `AOS', No.~318910.

\end{document}